\documentclass[11pt,a4paper]{article}

\usepackage{epsf,epsfig,amsfonts,amsgen,amsmath,amstext,amsbsy,amsopn,amsthm
}
\usepackage{amsmath,amssymb}
\usepackage{graphicx}
 \usepackage{tikz}
\usepackage{caption}
\usepackage{amsmath,times,mathptmx}
\usepackage{enumitem}
\usepackage{amsfonts,amsthm,amssymb}
\usepackage{amsfonts}
\usepackage{graphics}
\usepackage{latexsym,bm}
\usepackage{amsfonts,amsthm,amssymb,bbding}
\usepackage{indentfirst}
\usepackage{graphicx}
\usepackage{color}
\usepackage[colorlinks=true,anchorcolor=blue,filecolor=blue,linkcolor=blue,urlcolor=blue,citecolor=blue]{hyperref}
\usepackage{float}
\usepackage{tikz}
\usepackage{amsthm}
\usepackage{empheq}
\usepackage{cite}
\DeclareMathAlphabet{\mathcal}{OMS}{cmsy}{m}{n}
\DeclareSymbolFont{largesymbols}{OMX}{cmex}{m}{n}
\allowdisplaybreaks[4]
\usepackage{moresize}

\newtheorem{thm}{Theorem}

\newtheorem{lemma}{Lemma}
\newtheorem{false statement}{False statement}

\theoremstyle{definition}

\newtheorem{claim}{Claim}

\theoremstyle{plain}
\theoremstyle{definition}
\newtheorem{remark}[lemma]{Remark}

\newtheorem{case}{Case}
\newtheorem{casee}{Case}

\newtheorem{subcase}{Subcase}[case]
\newtheorem{subcasee}{Subcase}[casee]

\newtheorem{problem}{Problem}

\newcommand{\R}{\mathbb R}
\newcommand{\bibanchor}[1]{\hypertarget{ref:#1}{}}

\DeclareSymbolFont{CfourSigma}{OML}{cmm}{m}{it}
\SetSymbolFont{CfourSigma}{bold}{OML}{cmm}{b}{it}
\DeclareMathSymbol{\sigma}{\mathord}{CfourSigma}{"1B}

\begin{document}
	
\title{On the maximal index of $C_4^-$-free unbalanced signed graphs of prescribed size 
	\footnote{Supported by Tianshan Talent Training Program (No. 2024TSYCCX0013),
		Natural Science Foundation of Xinjiang Uygur Autonomous Region (No. 2024D01C41),
		The Basic scientific research in universities of Xinjiang
		Uygur Autonomous Region (XJEDU2025P001) and NSFC (No. 12361071).}}
\author{{Zhihui Yang, Dan Li\thanks{Corresponding author. E-mail: ldxjedu@163.com.}, Yuanyuan Chen}\\
{\footnotesize College of Mathematics and Systems Science, Xinjiang University, Urumqi 830046, China}}
\date{}

\maketitle {\flushleft\large\bf Abstract:}
The extremal problem of determining the graphs with maximum spectral radius among all graphs with a prescribed number of edges was resolved by Rowlinson
[Linear Algebra Appl. 110 (1988) 43--53]. A signed graph $\Gamma=(G,\sigma)$ is called $C_k^-$-free if it contains no negative $k$-cycle as a signed subgraph. The index of a signed graph is defined as the largest eigenvalue of its adjacency matrix. This paper addresses the maximization of the index of unbalanced signed graphs.
For signed graphs of a prescribed order, Wang and Lin [Discrete Appl. Math. 372 (2025) 164--172] determined the maximum index of $C_4^{-}$-free unbalanced signed graphs. Subsequently, Wang, Peng and Hou [Discrete Math. 349 (2026) 114871] characterized the unbalanced signed graphs of prescribed size with maximum index. This paper treats the edge spectral Turán-type problem for $C_4^-$-free unbalanced signed graphs. The all-positive signed graph with underlying graph $G$ is denoted by $(G,+)$. Evidently, $A((G,+))=A(G)$.
In particular, $(H_{d,t},+)$ is the all-positive signed graph whose underlying graph is $H_{d,t}$, where $H_{d,0}=K_d$ and for $t>0$, $H_{d,t}$ is obtained from $K_d$ by adding a single vertex adjacent to exactly $t$ vertices of $K_d$. Let $m-3=\binom{d}{2}+t$ with $0\le t\le d-1$. 
Up to switching isomorphism and the addition of isolated vertices, the unique extremal unbalanced signed graph is obtained by attaching a negative triangle to a specified vertex of the all-positive signed graph $(H_{d,t},+)$.

\vspace{0.1cm}
\begin{flushleft}
	\textbf{Keywords:} Signed graph; Tur\'an problem; Adjacency matrix; Maximal index
\end{flushleft}
\textbf{AMS Classification:} 05C50; 05C35

\section{Introduction}

Throughout this paper, we consider finite, undirected, simple graphs. Let $G=(V(G),E(G))$ be a graph with vertex set $V(G)=\{v_1,\ldots,v_n\}$ and edge set $E(G)$. The order and size of $G$ are $n=|V(G)|$ and $m=|E(G)|$, respectively. The neighborhood of a vertex $v$ is denoted by $N_G(v)$, and its degree is $d_G(v)=|N_G(v)|$. For convenience, we write $d(v)=d_G(v)$. For a subset $S\subseteq V(G)$, the induced subgraph $G[S]$ has vertex set $S$ and contains precisely the edges of $G$ with both endpoints in $S$.
 A signed graph $\Gamma=(G,\sigma)$ consists of an underlying graph $G$ and a sign function $\sigma:E(G)\to\{+1,-1\}$. An edge $e$ is called positive if $\sigma(e)=+1$ and negative if $\sigma(e)=-1$. A cycle in $\Gamma$ is positive if it contains an even number of negative edges, otherwise it is negative. A signed graph $\Gamma=(G,\sigma)$ is balanced if it contains no negative cycles, and unbalanced otherwise. Let $U \subseteq V(G)$. The operation that changes the signs of all edges between $U$ and $V(G)\setminus U$ is called a switching operation. If a signed graph $\Gamma'$ is obtained from $\Gamma$ by a sequence of switching operations, then $\Gamma$ and $\Gamma'$ are switching equivalent. Two signed graphs $\Gamma$ and $\Gamma'$ are switching isomorphic if there exists a vertex subset $U\subseteq V(\Gamma)$ such that the signed graph obtained by switching $\Gamma$ with respect to $U$ is isomorphic to $\Gamma'$.
The adjacency matrix of $\Gamma$ is $A(\Gamma)=(a_{uv})$, where $a_{uv}=\sigma(uv)$ if $uv\in E(G)$ and $a_{uv}=0$ otherwise. Write $\lambda_1(A(\Gamma))\ge\cdots\ge\lambda_n(A(\Gamma))$ for the eigenvalues of $A(\Gamma)$ in nonincreasing order. The largest eigenvalue $\lambda_1(A(\Gamma))$, abbreviated as $\lambda_1(\Gamma)$, is called the index of $\Gamma$. The spectral radius of $G$ is $\rho(G)=\max_i|\lambda_i(G)|$. Degrees and neighborhoods in a signed graph refer to its underlying graph, and $\Delta$ denotes the maximum degree. The basic theory of signed graphs goes back to Harary and Zaslavsky \cite{Harary1953,Zaslavsky1982,Zaslavsky2010}, standard references for graph spectra and real symmetric matrices are \cite{BrouwerHaemers,HornJohnson}. 

Let $\mathcal{F}$ be a family of graphs. A graph $G$ is called $\mathcal{F}$-free if $G$ contains no member of $\mathcal{F}$ as a subgraph. In particular, if $\mathcal{F} = \{F\}$, then the graph is called $F$-free. The classical spectral Turán problem is to determine the maximum spectral radius of an $\mathcal{F}$-free graph of order $n$, and this maximum value is also known as the spectral Turán number of $\mathcal{F}$.
 The edge-spectral-Tur\'an type problem, also called the Brualdi--Hoffman--Tur\'an type problem, asks the same question for graphs with a fixed number of edges. 
 Brualdi and Hoffman \cite{BrualdiHoffman} considered the problem of maximizing the spectral radius of a $(0,1)$-matrix with a prescribed number of ones. For symmetric $(0,1)$-matrices with zero diagonal and $2q$ ones, this is the problem of finding the maximum spectral radius among graphs with $q$ edges. Friedland \cite{Friedland1985,Friedland1988} proved several important cases, and Rowlinson \cite{Rowlinson1988} completed the characterization. More precisely, if $q=\binom d2+t$ with $0\le t\le d-1$, apart from isolated vertices, the extremal graph is $H_{d,t}$, where $H_{d,0}=K_d$ and for $t>0$, $H_{d,t}$ is obtained from $K_d$ by adding a single vertex adjacent to exactly $t$ vertices of $K_d$. Spectral Tur\'an problems for cycles and complete bipartite graphs have been studied in \cite{Turan1941,FurediSimonovits2013,Nikiforov2009,Nikiforov2010,Nikiforov2011,ZhaiLinShu2021,SunLi2023,SunLiWei2023}. Further conclusions may be found in these two surveys \cite{LiLiuFeng2022,YuZhangZhang2026}.

The spectral theory of signed graphs has been studied from several perspectives \cite{BelardoEtAl2018,BrunettiStanic2022,GhorbaniMajidi2021,KoledinStanic2017,Stanic2019,Stanic2019Regular,Stanic2020}. Recent spectral Tur\'an-type results concern forbidden unbalanced complete graphs on four vertices, negative triangles, unbalanced complete graphs on $r+1$ vertices with $2\le r\le n-1$
 \cite{ChenYuan2024,WangHouLi2024,XiongHou2024} and negative odd cycles \cite{WangHouHuang2025}. A signed graph $\Gamma$ is called $C_k^-$-free if $\Gamma$ contains no negative cycle of length $k$. Wang and Lin \cite{WangLin2025} obtained fixed-order results for $C_4^-$-free unbalanced signed graphs, while Cai and Zhou \cite{CaiZhou2024} investigated unbalanced signed graphs containing neither $C_3^-$ nor $C_4^-$. Wang, Peng and Hou \cite{WangPengHou2026} determined the unbalanced signed graphs with maximal index for a fixed number of edges.
Motivated by the above developments, we are led to consider the edge-spectral Tur\'an type problem for $C_k^-$-free unbalanced signed graphs.

\begin{problem}\label{p1}
For $m\ge 3$, what are the signed graphs with maximum index among all $C_k^-$-free unbalanced signed graphs with exactly $m$ edges?
\end{problem}

In this paper, we completely resolve the case $k=4$ of Problem~\ref{p1}. For $m\ge3$, let $\mathcal F_m=\{\Gamma=(G,\sigma):|E(G)|=m,\ \Gamma\text{ is unbalanced and }C_4^-\text{-free}\}$. A signed graph $\Gamma\in\mathcal F_m$ is called an extremal signed graph if it attains the maximum index over $\mathcal F_m$, isolated vertices are allowed.
 We first establish the structural characterization of extremal signed graphs in $\mathcal{F}_m$, which is presented in Theorem~\ref{thm1}. 
 
 \begin{theorem}\label{thm1}
 	Up to switching isomorphism and removal of isolated vertices, every extremal signed graph $\Gamma\in\mathcal F_m$ is connected and can be obtained by attaching a negative triangle at some  vertex $u$ of a connected all-positive signed graph $H$ with exactly $m-3$ edges.
 \end{theorem}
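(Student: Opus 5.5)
We plan to take an extremal $\Gamma\in\mathcal F_m$ and work with a unit eigenvector $x$ for $\lambda_1(\Gamma)$. After a suitable switching we may assume $x\ge 0$ is as "positive" as possible. Comparing with Rowlinson's theorem shows that $\lambda_1(\Gamma)$ must be close to the spectral radius of $H_{d,t}$ with $m-3=\binom d2+t$. The lower bound comes from the explicit candidate: $(H_{d,t},+)$ with a negative triangle attached. This candidate is unbalanced, and it is $C_4^-$-free because every $4$-cycle lies inside the positive part. Its index exceeds $\rho(H_{d,t})$.

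\textbf{Step 1 (connectivity).} If $\Gamma$ has several nontrivial components, the index is attained on one of them. Moving edges from the other components into that one cannot decrease the index, so extremality forces connectedness up to isolated vertices.

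\textbf{Step 2 (negative edges, balanced part).} Choose a switching that minimizes the number of negative edges $e^-$, and call $\Gamma$ optimal. By the Rayleigh quotient,
\[
\lambda_1=\sum_{uv}2\sigma(uv)x_ux_v\le \lambda_1(G)-4\sum_{uv\ \mathrm{neg}}x_ux_v .
\]
We show that unbalance plus $C_4^-$-freeness forces a negative cycle of length $3$ or at least $5$. We then argue that in an extremal graph the unbalanced part can be concentrated on a single triangle. The mechanism is an edge-relocation: delete the negative edges and the edges of the unbalanced structure, and re-insert them as a pendant negative triangle at the vertex $u$ of maximum $x_u$. One checks via the quotient $x^TA x$ (with $x$ extended to the new vertices by the eigen-equations) that the index does not decrease, with strict increase unless $\Gamma$ already has that shape.

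\textbf{Step 3 (triangle attached at a cut vertex).} Show that the negative triangle $uvw$ has $v,w$ of degree $2$. If $v$ or $w$ had further neighbours, the cycles through them would create, given $C_4^-$-freeness, structural constraints: any common neighbour $z$ of $v,w$ gives the negative $4$-cycle $zvuw$-type configuration unless signs compensate. Using these constraints we transfer those edges to $u$ (the Wu--Xiao--Hong rotation lemma), which raises the index when $x_u\ge x_v$. What remains, $H=\Gamma-\{v,w\}$, then has $m-3$ edges. It must be balanced, since otherwise we could rebalance it and rotate edges to increase the index, contradicting extremality. After switching, $H$ is all-positive, and it is connected by Step 1.

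\textbf{Main obstacle.} The hardest part is Step 2: showing rigorously that an extremal graph cannot carry its unbalance in a more "spread out" way, for instance many negative edges each lying in long negative cycles. Such configurations could conceivably keep more of the dense part positive. I would first try a counting argument bounding $\lambda_1^2\le 2m-\text{(loss)}$, adapting Nosal/Nikiforov-type inequalities to $C_4^-$-free signed graphs. I would then compare this bound with $\lambda_1$ of the candidate, treating small $m$ by direct verification.
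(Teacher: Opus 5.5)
Your proposal has a genuine gap, and it sits exactly where the theorem is hard.

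\textbf{Step 2 is not yet an argument.} ``Delete the negative edges and the unbalanced structure and re-insert them as a pendant negative triangle'' does not preserve the edge count in general, since a pendant triangle has exactly three edges. It also says nothing about a graph with one negative edge lying only on long negative cycles. The missing idea is a parity reduction, carried out in the switching where the eigenvector $\mathbf q$ is nonnegative. (This need not be the switching that minimizes $|E^-|$, so you cannot assume both at once.)
\begin{enumerate}
\item Suppose a negative edge $uv$ avoids some negative cycle $C$. Delete $uv$ and add a positive pendant edge at a vertex of maximum $q$-entry. Then $C$ survives, no cycle is created, and the index strictly increases.
\item Hence every negative edge lies on every negative cycle, so $|E^-|$ is odd.
\item If $|E^-|\ge3$, then $\lambda\le2\sum_{E^+}q_uq_v\le\rho_{m-3}$. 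This is beaten by a Rowlinson-extremal graph on $m-3$ edges with a negative triangle hung at a vertex of positive Perron entry.
\end{enumerate}
This yields a unique negative edge $u'u''$. Step 1 has a similar issue: ``moving edges'' between components can destroy the unbalance or create a $C_4^-$. Gluing a maximum-entry vertex of the component carrying $\mathbf q$ to a vertex of another component preserves every cycle and avoids this.

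\textbf{Step 3 rests on a false structural claim.} Let $u'u''$ be the unique negative edge and let $u,z$ be two common neighbours. The $4$-cycle $u'\,u\,u''\,z$ avoids $u'u''$, so it is positive. $C_4^-$-freeness only forces $Z=N(u')\cap N(u'')$ to be independent, with no edges between $Z$, $A=N(u')\setminus(Z\cup\{u''\})$ and $B=N(u'')\setminus(Z\cup\{u'\})$. So $|Z|\ge2$ is compatible with all constraints. Your rotation $u'z\to uz$ is not even admissible: it creates the negative $4$-cycle $u'\,u\,z\,u''$. You also presuppose a negative triangle, i.e.\ $Z\ne\varnothing$. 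That is exactly what must be proved when the negative edge lies only on cycles of length at least $5$.

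The paper proves $Z\ne\varnothing$ and $|Z|=1$ with genuinely spectral arguments:
\begin{enumerate}
\item Stanley's bound $\rho_k^2+\rho_k\le2k$, applied to $\Gamma-\{u',u''\}$.
\item The lower bound $\lambda^2+2\lambda>2m-4$ from the explicit candidates (Lemma~\ref{lemma2.7}), played against item 1.
\item For $A=\varnothing$, the Schur-complement identity $\lambda=\rho\bigl(A(F)+\frac{2}{\lambda+1}M_Z\bigr)$.
\item A modified operation: delete both $u'z$ and $u''z$, then add $uz$ and a pendant edge at $u$. Its effect is controlled only when $c\le\lambda+\frac12+\frac1{2\lambda}$; the complementary range is excluded by items 1 and 2.
\end{enumerate}

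Rotations do handle the private neighbours. $B$ can be moved to $u'$ using $q_{u'}\ge q_{u''}$. Once $|Z|=1$, you get $q_u=q_{u'}+\lambda q_{u''}\ge q_{u'}$, so $A$ can be moved to $u$. But the two hard claims are left unproved in your outline. Your suggested ``$\lambda_1^2\le2m-\text{(loss)}$'' counting points in the right direction, but it has to be carried out against the sharp lower bound coming from the candidate graphs.
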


 Write uniquely $m-3=\binom d2+t$, where $d\ge1$ and $0\le t\le d-1$. When $t=0$, choose any vertex $u\in V(K_d)$ and denote by $\Gamma_m^*$ the signed graph obtained from $(K_d,+)$ by adding two new vertices $u',u''$ together with the positive edges $uu',uu''$ and the negative edge $u'u''$. When $1\le t\le d-1$, let $z$ be the added vertex of $H_{d,t}$, which is adjacent to precisely $t$ vertices of the underlying complete subgraph $K_d$, with all these incident edges being positive in $(H_{d,t},+)$.  Choose a vertex $u\in N_{H_{d,t}}(z)$. Denote by $\Gamma_m^{**}$ the signed graph obtained from $(H_{d,t},+)$ by adding two vertices $u',u''$, together with the positive edges $uu',uu''$ and the negative edge $u'u''$. Obviously, $\Gamma_m^*\in\mathcal F_m$ for $t=0$, and $\Gamma_m^{**}\in\mathcal F_m$ for $1\le t\le d-1$. These two graphs are shown in Figure~\ref{Fig:1}. In all figures of this paper, dashed lines represent negative edges.
 
\begin{figure}[H]
	\centering
	\makebox[\textwidth][c]{%
		\begin{minipage}[c]{0.34\textwidth}
			\centering
			\begin{tikzpicture}[
				scale=1,
				vertex/.style={circle, fill=black, inner sep=1.5pt},
				lab/.style={draw=none, fill=none, inner sep=1pt, font=\small},
				positive/.style={line width=0.75pt},
				negative/.style={densely dashed, line width=0.75pt}
				]
				
				\draw[thick] (0.12,0) ellipse (0.28cm and 1.18cm);
				
				\node[vertex] (u) at (0.14,0.62) {};
				\node[vertex] (v) at (0.14,-0.58) {};
				\node[lab] at (0.10,0.78) {$u$};
				\node[lab] at (0.14,0.02) {$\vdots$};
				
				\node[vertex] (x) at (1.55,0.65) {};
				\node[vertex] (y) at (1.55,-0.10) {};
				\node[lab] at (1.82,0.65) {$u'$};
				\node[lab] at (1.82,-0.10) {$u''$};
				
				\draw[positive] (u) -- (x);
				\draw[positive] (u) -- (y);
				\draw[negative] (x) -- (y);
				
				\node[lab] at (0.05,-1.45) {$\left(K_d,+\right)$};
				
			\end{tikzpicture}
			\par\vspace{1pt}
			\(\Gamma_m^*\)
		\end{minipage}%
		
		\hspace{0.015\textwidth}%
		
		\hspace{0.015\textwidth}%
		\begin{minipage}[c]{0.46\textwidth}
			\centering
			\begin{tikzpicture}[
				scale=1,
				vertex/.style={circle, fill=black, inner sep=1.5pt},
				lab/.style={draw=none, fill=none, inner sep=1pt, font=\small},
				positive/.style={line width=0.75pt},
				negative/.style={densely dashed, line width=0.75pt}
				]
				
				\draw[thick] (0.12,0) ellipse (0.28cm and 1.18cm);
				
				\draw[thick] (1.46,0) ellipse (0.28cm and 1.18cm);
				
				\node[vertex] (a1) at (0.14,0.72) {};
				\node[vertex] (a2) at (0.14,-0.58) {};
				\node[lab] at (0.14,0.10) {$\vdots$};
				
				\node[vertex] (u)  at (1.48,0.72) {};
				\node[vertex] (b2) at (1.48,-0.58) {};
				\node[lab] at (1.48,0.10) {$\vdots$};
				\node[lab] at (1.45,0.96) {$u$};
				
				\node[vertex] (z) at (2.85,0.80) {};
				\node[vertex] (x) at (2.85,0.15) {};
				\node[vertex] (y) at (2.85,-0.40) {};
				\node[lab] at (3.12,0.80) {$z$};
				\node[lab] at (3.12,0.15) {$u'$};
				\node[lab] at (3.12,-0.40) {$u''$};
				
				\draw[positive] (a1) -- (u);
				\draw[positive] (a1) -- (b2);
				\draw[positive] (a2) -- (u);
				\draw[positive] (a2) -- (b2);
				
				\draw[positive] (u) -- (z);
				\draw[positive] (u) -- (x);
				\draw[positive] (u) -- (y);
				
				\draw[positive] (b2) -- (z);
				
				\draw[negative] (x) -- (y);
				
				\node[lab] at (0.10,-1.45) {$\left(K_{d-t},+\right)$};
				\node[lab] at (1.34,-1.45) {$\left(K_t,+\right)$};
				
			\end{tikzpicture}
			\par\vspace{1pt}
			\(\Gamma_m^{**}\)
		\end{minipage}%
	}
	
	\par\vspace{1pt}
	\caption{\(\Gamma_m^*\) and \(\Gamma_m^{**}\) 
	}
	\label{Fig:1}
\end{figure}

 With the preparation furnished by Theorem~\ref{thm1}, we go on to derive the full structural characterization of extremal signed graphs, as given in Theorem~\ref{thm2}.

\begin{theorem}\label{thm2}
Let $m\ge3$ with integers $d\ge1$ and $0\le t\le d-1$ giving the unique decomposition $m-3=\binom d2+t$. If $\Gamma\in\mathcal F_m$ attains the maximum index, then the following statements hold:
\begin{enumerate}[label=(\arabic*)]
\item[(i)] If $t=0$, then $\Gamma$ is switching isomorphic to $\Gamma_m^*\cup r_0K_1$ for some integer $r_0\ge0$.
\item[(ii)] If $1\le t\le d-1$, then $\Gamma$ is switching isomorphic to $\Gamma_m^{**}\cup r_0K_1$ for some integer $r_0\ge0$.
\end{enumerate}
\end{theorem}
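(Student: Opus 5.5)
By Theorem~\ref{thm1}, after removing isolated vertices and switching, an extremal $\Gamma$ is obtained from a connected all-positive $H$ with $m-3$ edges by attaching a negative triangle $uu'u''$ at a vertex $u\in V(H)$. We may switch so that $uu',uu''$ are positive and $u'u''$ is negative. The problem then splits into two optimization steps: choosing $H$, and choosing the vertex $u$.

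\textbf{Reducing to the pendant structure.} Write $B$ for the signed graph on $\{u,u',u''\}$. A Perron-type vector $x$ of $\Gamma$ for $\lambda=\lambda_1(\Gamma)$ can be taken nonnegative on $H$, with $x_{u'}=x_{u''}=:y$ by symmetry. The eigen-equation at $u'$ reads $\lambda y=x_u-y$, so
\[
y=\frac{x_u}{\lambda+1}.
\]
Eliminating $u',u''$ by a Schur complement, $\lambda$ is the largest root of
\[
\lambda_1\!\left(A(H)+\frac{2}{\lambda+1}E_{uu}\right)=\lambda ,
\]
where $E_{uu}$ is the diagonal unit at $u$. Equivalently, $\lambda_1(\Gamma)$ is the largest $\lambda$ with
\[
\phi(H,\lambda)(\lambda+1)-2\,\phi(H-u,\lambda)=0
\]
(up to the factor $(\lambda-1)$ coming from the antisymmetric vector on $u',u''$).

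\textbf{Optimizing $H$ for fixed $u$.} The map $H\mapsto \lambda_1\!\left(A(H)+cE_{uu}\right)$ with $c>0$ is monotone under adding edges. It also behaves like Rowlinson's problem with a vertex weight. I would rerun Rowlinson's argument (Kelmans-type shifting and threshold structure) for the perturbed matrix $A(H)+cE_{uu}$. This shows the optimal $H$ is a threshold graph in which $u$ is a dominating vertex of maximum weight. The comparison among threshold graphs with $m-3$ edges is where $H_{d,t}$ must be shown optimal.

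A simpler route is to use Rowlinson's theorem directly on $H$, combined with an interlacing/perturbation bound. The extra term $2x_u^2/(\lambda+1)$ is $O(1/d)$, whereas the gap between $\lambda_1(H_{d,t})$ and the index of any other $H$ with $m-3$ edges should dominate it. One then compares the few near-extremal candidates directly via the characteristic equation above, for example $K_{d}$ plus a pendant path, or $H_{d,t}$ with the added vertex joined differently.

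\textbf{Choosing $u$ within $H_{d,t}$.} Take $H=H_{d,t}$. Adding weight $c$ at $u$ increases $\lambda$ most when $x_u$ is largest, by first-order perturbation and in fact by the monotone Schur-complement equation. In $H_{d,t}$ the Perron vector is largest on the vertices of $K_d$ adjacent to $z$, since these have the largest closed neighborhoods; for $t=0$ all vertices of $K_d$ are equivalent. To make the comparison rigorous, I would compare the two choices $u\in N(z)$ and $u\notin N(z)$, or $u=z$, by a rotation or edge-switching argument: moving the triangle from $v$ to $w$ with $x_w\ge x_v$ strictly increases $\lambda$ unless $x_w=x_v$. Strictness then yields uniqueness up to switching isomorphism and isolated vertices, giving (i) and (ii).

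\textbf{Main obstacle.} The hard step is the global optimality of $H=H_{d,t}$ for the weighted problem. Rowlinson's extremal graph maximizes $\lambda_1(H)$ but not obviously $\lambda_1\!\left(A(H)+cE_{uu}\right)$, so a graph with a higher-degree vertex (e.g.\ a star-like threshold graph) could in principle win. I would handle it by a Rowlinson-style argument: first show the optimal $H$ is a threshold graph containing $u$ as a dominating vertex, then use edge-rotation toward higher Perron entries to reduce to $H_{d,t}$. For small $m$, small cases are checked directly.
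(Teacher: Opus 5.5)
\textbf{There is a genuine gap at the central step.} Your reduction to $\lambda=\rho\bigl(A(H)+\frac{2}{\lambda+1}E_{uu}\bigr)$ and your shifting step match the paper's Claims~\ref{claim7} and~\ref{claim8}. That shifting step makes $u$ a dominating vertex of maximum entry and makes $H$ shifted with respect to the ordering by eigenvector entries. But the step you call the main obstacle is left undone, and neither tool you propose can do it.

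\emph{Rotations stall.} Edge rotations toward larger entries stop at \emph{every} shifted graph. The star $K_{1,m-3}$ centred at $u$ admits no improving rotation of that kind. Neither does any clique $K_{c'}$ with $c'<d$ on the top vertices plus an independent set joined to initial segments of the clique. None of these is $H_{d,t}$. Singling out $H_{d,t}$ among shifted graphs is exactly the part of Rowlinson's theorem that shifting does not give. Here it would have to be redone for a diagonal weight that itself depends on $\lambda$.

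\emph{The ``simpler route'' fails quantitatively.} The bound $\rho(A(H)+cE_{uu})\le\rho(H)+c$ costs $c=2/(\lambda+1)\approx 2/d$. Gaps between $\rho(H_{d,t})$ and same-size competitors can be of order $1/d^2$. Concretely, take $d=4$, $t=2$. Then $\rho(H_{4,2})=\rho(K_5-K_{1,2})\approx 3.323$ and $\rho(K_5-2K_2)=1+\sqrt5\approx 3.236$. Attaching the triangle at a dominating vertex raises these to about $3.45$ and $\frac{1+\sqrt{33}}{2}\approx 3.372$. Both increments exceed the gap, so the gap does not dominate the extra term. What decides the comparison is how the increment varies with $H$, and your argument does not control this.

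\textbf{What the paper does instead.} It compares exactly against the target value. Let $\lambda^*=\lambda_1(\Gamma_m^{**})$ and $a_0=\lambda^*+1\le\lambda+1$. Monotonicity in the weight gives $\rho\bigl(A(H)+\frac{2}{a_0}E_{uu}\bigr)\ge\lambda\ge\lambda^*$. So it suffices to show that $\lambda^*I-A(H)-\frac{2}{a_0}E_{uu}$ is positive definite for every shifted $H\ne H_{d,t}$.
\begin{enumerate}
\item Eliminate the independent part $W$ by a Schur complement and use $\rho(Z_{c'})\le\mathrm{tr}\,Z_{c'}$. This reduces the question to $\sum_i g_{c'}(d_H(w_i))<\lambda^*$.
\item Sherman--Morrison gives the closed form $g_{c'}(z)=\frac{(a_0-c'+z)(zm^*+2)}{a_0((a_0-c')m^*-2)}$.
\item Convexity of $g_{c'}$ (Facts 1 and 2) reduces to the configuration where all but one vertex of $W$ has degree $c'-1$.
\item An explicit check that $N>0$ in the three cases for $\widetilde M$ excludes $c'<d$. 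This uses identity (4) and the bounds $d-1+\frac{t^2(d+1)}{d^3+t^2}<\lambda^*<d$ from Lemma~\ref{lemma2.7}.
\item For $c'=d$, strict convexity forces $|W|=1$.
\end{enumerate}
Some argument of this precision is needed. For $t=0$ the paper avoids all of this. It invokes the Wang--Peng--Hou classification (Lemma~\ref{lemma2.6}) and the identity $xf_1(x)-f_2(x)=(d-3)(x+1)$, and handles small $d$ separately.

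\textbf{A smaller point on choosing $u$.} You must use the eigenvector of $\Gamma$, not the Perron vector of $H_{d,t}$. Put the triangle at $u\notin N(z)$ with $t=1$, and let $w$ be the neighbour of $z$. The eigen-equations give $x_u\bigl(\lambda+1-\frac{2}{\lambda+1}\bigr)=x_w\bigl(\lambda+1-\frac1\lambda\bigr)$, so $x_u>x_w$. Moving the triangle then gains nothing. You must instead rotate the edge $zw$ to $zu$, as in Claim~\ref{claim7}.
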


Inspired by the block matrix formulation of the reconstruction
theorem for signed graphs
\cite{jiang2023maximal},
we use Schur complements in our spectral comparisons.
After the structural reduction in Theorem~\ref{thm1},
we eliminate the eigenvector coordinates corresponding to
the two endpoints of the unique negative edge,
reducing the eigenvalue equation to one for $A(H)$ with
a positive diagonal perturbation at the attachment vertex $u$. The remainder of this paper is organized as follows.
Section~2 presents the preliminary results used in the proofs.
In Section~3, we prove Theorem~\ref{thm1}, showing that every
extremal graph is, up to switching isomorphism and isolated
vertices, obtained by attaching a negative triangle to an
all-positive connected graph $H$.
In Section~4, we use this structural reduction to determine
$H$ and the vertex at which the triangle is attached, thereby
proving Theorem~\ref{thm2} and completing the characterization
of the extremal graphs.
\section{Preliminaries}

\begin{lemma}\cite{Stanic2019}\label{lemma2.1}.
 Let $\Gamma$ be a signed graph. Then there exists a signed graph $\Gamma'$ switching equivalent to $\Gamma$ such that $A(\Gamma')$ has a nonnegative eigenvector corresponding to $\lambda_1(A(\Gamma'))$.
\end{lemma}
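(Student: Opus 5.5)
The plan is to realize switching as a diagonal similarity and then choose the switching set from the signs of a fixed eigenvector. Let $\Gamma=(G,\sigma)$ have vertex set $V(G)=\{v_1,\ldots,v_n\}$. Fix any real eigenvector $x=(x_1,\ldots,x_n)^{T}$ of $A(\Gamma)$ for $\lambda_1(A(\Gamma))$; one exists because $A(\Gamma)$ is real symmetric.

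Define $U=\{v_i : x_i<0\}$. Let $D=\mathrm{diag}(s_1,\ldots,s_n)$ with $s_i=-1$ if $v_i\in U$ and $s_i=1$ otherwise. Then $D=D^{-1}=D^{T}$.

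The key observation is that $DA(\Gamma)D$ is exactly the adjacency matrix of the signed graph $\Gamma'$ obtained from $\Gamma$ by switching with respect to $U$. Indeed, the $(i,j)$ entry of $DA(\Gamma)D$ is $s_is_ja_{ij}$. Here $s_is_j=-1$ precisely when exactly one of $v_i,v_j$ lies in $U$. So the sign of an edge $v_iv_j$ is reversed exactly when the edge joins $U$ to $V(G)\setminus U$, and all zero entries stay zero. In particular, $\Gamma'$ is switching equivalent to $\Gamma$.

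Since $A(\Gamma')=DA(\Gamma)D^{-1}$ is similar to $A(\Gamma)$, the two matrices have the same spectrum, so $\lambda_1(A(\Gamma'))=\lambda_1(A(\Gamma))$. Moreover,
\[
A(\Gamma')(Dx)=DA(\Gamma)D^{2}x=DA(\Gamma)x=\lambda_1(A(\Gamma))\,Dx .
\]
Hence $Dx$ is an eigenvector of $A(\Gamma')$ for $\lambda_1(A(\Gamma'))$. Its $i$-th coordinate is $s_ix_i=|x_i|\ge 0$ by the choice of $U$, so $Dx$ is the required nonnegative eigenvector.

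There is no genuine obstacle in this argument. The only point needing care is the bookkeeping identity $A(\Gamma^U)=DA(\Gamma)D$, which ties the combinatorial switching operation to a similarity transformation. Once that identity is checked entrywise as above, the rest is immediate.
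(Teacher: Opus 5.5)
Your proof is correct. The paper gives no proof of its own (it cites Stanić), and your argument is the standard one: realize switching with respect to $U$ as the signature similarity $A(\Gamma)\mapsto DA(\Gamma)D$, and take $U$ to be the set of vertices where a fixed eigenvector for $\lambda_1$ has negative entries, so that $Dx$ has entries $|x_i|\ge 0$.
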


\begin{lemma}\cite{XiongHou2024}\label{lemma2.2}.
Let $\Gamma=(G,\sigma)$ be a signed graph with a nonnegative unit
	eigenvector $X=(x_1,x_2,\ldots,x_n)^T$ corresponding to the largest
	eigenvalue $\lambda_1(A(\Gamma))$. If we perform one of the following
	perturbations in $\Gamma$:
	\begin{enumerate}
		\item[(i)] Adding some positive edges,
		\item[(ii)] Removing some negative edges,
		\item[(iii)] Reversing the signs of some negative edges,
	\end{enumerate}
	resulting in a new signed graph $\Gamma'$, then
	$\lambda_1(A(\Gamma'))\geq \lambda_1(A(\Gamma))$.
	The equality holds only if the entries of $X$ corresponding to
	the endpoints of these edges are all zeros.
\end{lemma}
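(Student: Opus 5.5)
The plan is to compare the two largest eigenvalues through the Rayleigh quotient, using the nonnegative unit eigenvector $X$ of $A(\Gamma)$ as a test vector for $A(\Gamma')$. Write $\lambda=\lambda_1(A(\Gamma))$ and $A'=A(\Gamma')$. Since $A'$ is real symmetric, $\lambda_1(A')=\max_{\|Y\|=1}Y^TA'Y\ge X^TA'X$.

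\textbf{Step 1: a formula for $X^TA'X$.} I would write
\[
X^TA'X=X^TA(\Gamma)X+X^T(A'-A(\Gamma))X=\lambda+X^T(A'-A(\Gamma))X .
\]
The matrix $A'-A(\Gamma)$ is supported on the perturbed pairs $\{u,v\}$. For such a pair, the entry $(A'-A(\Gamma))_{uv}$ is:
\begin{itemize}
\item $+1$ when a positive edge is added ($0\to 1$);
\item $+1$ when a negative edge is removed ($-1\to 0$);
\item $+2$ when a negative edge is reversed ($-1\to 1$).
\end{itemize}
Hence $X^T(A'-A(\Gamma))X=\sum c_{uv}\,2x_ux_v$ with every $c_{uv}\in\{1,2\}$. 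Because $X\ge 0$, each term is nonnegative, so $\lambda_1(A')\ge X^TA'X\ge\lambda$. This gives the inequality.

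\textbf{Step 2: the equality case.} This is the only step needing care. Suppose $\lambda_1(A')=\lambda$. Then the chain above is tight in both places.
\begin{itemize}
\item From $X^TA'X=\lambda_1(A')$ with $\|X\|=1$, the equality case of the Rayleigh principle (expand $X$ in an orthonormal eigenbasis of $A'$) shows that $X$ is an eigenvector of $A'$ for $\lambda$, that is, $A'X=\lambda X$.
\item Subtracting $A(\Gamma)X=\lambda X$ gives $(A'-A(\Gamma))X=0$.
\end{itemize}
Now read off the coordinate at an endpoint $u$ of a perturbed edge:
\[
0=\big((A'-A(\Gamma))X\big)_u=\sum_{v:\,\{u,v\}\text{ perturbed}} c_{uv}\,x_v .
\]
Every $c_{uv}>0$ and every $x_v\ge 0$, so $x_v=0$ for each $v$ joined to $u$ by a perturbed edge. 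Applying the same identity at $v$ gives $x_u=0$. Thus both endpoints of every perturbed edge have zero entries in $X$, which is the stated necessary condition for equality.

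I expect no real obstacle. The one point to handle explicitly is that vanishing of the quadratic increment alone only gives $x_ux_v=0$, which is weaker than the claim. The stronger conclusion, that both entries vanish, comes from upgrading $X$ to an eigenvector of $A'$ and then using the coordinate equations together with $X\ge 0$.
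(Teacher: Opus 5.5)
Your proof is correct: the Rayleigh bound $\lambda_1(A(\Gamma'))\ge X^TA(\Gamma')X=\lambda_1(A(\Gamma))+2\sum c_{uv}x_ux_v$ gives the inequality. In the equality case, you upgrade $X$ to an eigenvector of $A(\Gamma')$ and read $(A(\Gamma')-A(\Gamma))X=\mathbf 0$ coordinatewise, rather than stopping at $x_ux_v=0$, and this correctly forces both endpoint entries to vanish. The paper cites this lemma from Xiong--Hou without giving a proof, but your argument is the standard one and is the same device the paper uses in the proofs of Claims~\ref{claim2}, \ref{claim4} and \ref{claim6}.
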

Let $J_d$ be the all-ones matrix, $I_d$ the $d\times d$ identity matrix and $\mathbf1$ the all-ones column vector.
\begin{lemma}\cite{HornJohnson}\label{lemma2.3}.
Let $  M=\begin{pmatrix}P&R\\R^T&S\end{pmatrix}$
be real symmetric.
\begin{enumerate}[label=(\alph*)]
\item[(i)] If $S-\lambda I$ is invertible, then $  M\binom{p}{q}=\lambda\binom{p}{q}$
is equivalent to $
 \left(P-R(S-\lambda I)^{-1}R^T\right)p=\lambda p,
 q=-(S-\lambda I)^{-1}R^Tp.$
\item[(ii)] If $P$ is invertible, then
$ \det(P+uv^T)=\det(P)\bigl(1+v^TP^{-1}u\bigr).$
\item[(iii)]If $P$ is invertible and $1+v^TP^{-1}u\ne0$, then
$ (P+uv^T)^{-1}
 =P^{-1}-\frac{P^{-1}uv^TP^{-1}}{1+v^TP^{-1}u}.$
\item [(iv)]If $P$ is positive definite and $\gamma>0$, 
$ P-\gamma vv^T$ is positive definite if and only if $ \gamma v^TP^{-1}v<1.$
\end{enumerate}
\end{lemma}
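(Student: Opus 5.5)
The four parts of Lemma~\ref{lemma2.3} are standard linear-algebra identities, so the plan is to verify each directly from block multiplication. The only tools needed are the block factorization through $P$ and the Schur complement, plus an elementary congruence argument for part (iv).

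\emph{Part (i).} Write the eigen-equation $M\binom{p}{q}=\lambda\binom{p}{q}$ as the two block equations
\[
(P-\lambda I)p+Rq=0,\qquad R^Tp+(S-\lambda I)q=0 .
\]
Since $S-\lambda I$ is invertible, the second equation is equivalent to $q=-(S-\lambda I)^{-1}R^Tp$. Substituting this into the first gives $\bigl(P-R(S-\lambda I)^{-1}R^T\bigr)p=\lambda p$. Conversely, if $p$ satisfies this reduced equation and $q$ is defined by the displayed formula, both block equations hold. Hence the two systems are equivalent.

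\emph{Part (ii).} Factor $P+uv^T=P(I+P^{-1}uv^T)$, so it suffices to show $\det(I+wv^T)=1+v^Tw$ with $w=P^{-1}u$. I would use the block identity
\[
\begin{pmatrix}I&0\\ v^T&1\end{pmatrix}\begin{pmatrix}I+wv^T&w\\0&1\end{pmatrix}\begin{pmatrix}I&0\\-v^T&1\end{pmatrix}=\begin{pmatrix}I&w\\0&1+v^Tw\end{pmatrix},
\]
which is checked by multiplying out. The outer factors are unitriangular, so taking determinants gives the claim. Alternatively, $wv^T$ has rank at most one with only possible nonzero eigenvalue $v^Tw$, so $I+wv^T$ has eigenvalues $1,\dots,1,1+v^Tw$.

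\emph{Part (iii).} Multiply $P+uv^T$ by the proposed inverse and expand. The term $uv^TP^{-1}$ appears once from $(P+uv^T)P^{-1}$. It is cancelled by
\[
\frac{(u+u\,v^TP^{-1}u)\,v^TP^{-1}}{1+v^TP^{-1}u}=uv^TP^{-1},
\]
using that $v^TP^{-1}u$ is a scalar. The product is therefore $I$, and the hypothesis $1+v^TP^{-1}u\neq0$ is exactly what makes the formula well defined.

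\emph{Part (iv).} This is the only part needing a little care. Let $P^{1/2}$ be the positive definite square root and set $w=P^{-1/2}v$. By congruence,
\[
P-\gamma vv^T=P^{1/2}\bigl(I-\gamma ww^T\bigr)P^{1/2},
\]
so $P-\gamma vv^T$ is positive definite if and only if $I-\gamma ww^T$ is. The eigenvalues of $I-\gamma ww^T$ are $1$, with multiplicity $n-1$ on $w^\perp$, and $1-\gamma\|w\|^2$ on the span of $w$ (if $w=0$ the statement is trivial). Since $\|w\|^2=v^TP^{-1}v$, positive definiteness is equivalent to $\gamma v^TP^{-1}v<1$.

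I expect no real obstacle. The mildest point is handling part (iv) cleanly through the square-root congruence rather than through determinants. A determinant-only argument would combine part (ii), which gives $\det(P-\gamma vv^T)=\det P\,(1-\gamma v^TP^{-1}v)$, with the fact that a rank-one negative perturbation lowers at most one eigenvalue below those of $P$. That route could replace the congruence argument if preferred.
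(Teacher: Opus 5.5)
Your proposal is correct and follows the paper's argument. The paper simply cites (i)--(iii) as the standard Schur-complement, matrix-determinant and Sherman--Morrison formulas, and your block-multiplication checks are valid direct verifications of exactly these. For (iv) the paper uses the same congruence by $P^{-1/2}$, which reduces the question to $I-\gamma ww^T$ with $w=P^{-1/2}v$, whose eigenvalues are $1$ and $1-\gamma v^TP^{-1}v$.
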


\noindent \emph{Proof.}
Parts (i)--(iii) are the standard Schur-complement, matrix-determinant, and Sherman--Morrison formulas. For part (iv), applying a congruence transformation with $P^{-1/2}$ reduces the assertion to the positivity of
$ I-\gamma(P^{-1/2}v)(P^{-1/2}v)^T$,
whose eigenvalues are all equal to $1$, except possibly one equal to $1-\gamma v^TP^{-1}v$.

\begin{lemma}\cite{Stanley1987}\label{lemma2.4}.
Let $\rho_m=\max\{\rho(G): |E(G)|=m\}$,
where $\rho(G)$ denotes the spectral radius of the adjacency matrix of a graph $G$. Then, for every $m\geq 0$, $\rho_m^2+\rho_m\leq 2m.$
\end{lemma}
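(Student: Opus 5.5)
The plan is a direct Perron-vector argument. Let $G$ be any graph with $m$ edges, and write $A=A(G)$ and $\lambda=\rho(G)$. Since $A$ is nonnegative and symmetric, $\lambda$ is its largest eigenvalue, and it has a nonnegative unit eigenvector $x=(x_1,\dots,x_n)^T$. It suffices to show $\lambda^2+\lambda\le 2m$ for this single $G$; taking the maximum over all $G$ with $m$ edges then gives $\rho_m^2+\rho_m\le 2m$.

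\emph{Step 1: a Cauchy--Schwarz bound at each vertex.} For each vertex $i$ the eigenvalue equation reads $\lambda x_i=\sum_{j\in N(i)}x_j$. Applying Cauchy--Schwarz to this sum of $d_i$ terms gives
$$\lambda^2x_i^2\le d_i\sum_{j\in N(i)}x_j^2 .$$
The key observation is that $i\notin N(i)$, since $G$ has no loops. Because $x$ is a unit vector, this forces $\sum_{j\in N(i)}x_j^2\le 1-x_i^2$. Hence
$$\lambda^2x_i^2\le d_i(1-x_i^2).$$

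\emph{Step 2: summing over vertices.} Summing over $i$ and using $\sum_i x_i^2=1$ and $\sum_i d_i=2m$ yields
$$\lambda^2\le 2m-\sum_i d_ix_i^2 .$$

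\emph{Step 3: a lower bound for $\sum_i d_ix_i^2$.} It remains to show $\sum_i d_ix_i^2\ge\lambda$. Write $\lambda=x^TAx=\sum_{i,j}a_{ij}x_ix_j$. By the AM--GM inequality $x_ix_j\le\tfrac12(x_i^2+x_j^2)$, this is at most
$$\tfrac12\sum_{i,j}a_{ij}(x_i^2+x_j^2)=\sum_i d_ix_i^2 .$$
Combining with Step 2 gives $\lambda^2\le 2m-\lambda$, which is the claim.

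None of the steps is a serious obstacle. The only point needing care is in Step 1: one must use $x_i\notin\{x_j:j\in N(i)\}$ to get the bound $1-x_i^2$ rather than the cruder bound $1$. That crude bound would only give $\lambda^2\le 2m$ and lose the linear term. Isolated vertices cause no trouble, since for them both sides of the inequality in Step 1 are zero.
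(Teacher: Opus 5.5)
Your argument is correct. The paper gives no proof of this lemma and only quotes it from Stanley (1987); your route is essentially Stanley's original argument: Cauchy--Schwarz on each eigen-equation $\lambda x_i=\sum_{j\in N(i)}x_j$, the bound $\sum_{j\in N(i)}x_j^2\le 1-x_i^2$ from $a_{ii}=0$, and then $\sum_i d_ix_i^2\ge x^TAx=\lambda$.
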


\begin{lemma}\cite{BrouwerHaemers}\label{lemma2.5}.
Let $T$ be an irreducible nonnegative matrix with spectral radius $\rho(T)$.
Any nonnegative left or right eigenvector of $T$ has eigenvalue $\rho(T)$.
\end{lemma}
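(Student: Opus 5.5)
The plan is to pair the given nonnegative eigenvector with a strictly positive Perron eigenvector of the transpose, so that the eigenvalue is pinned down by a scalar identity. I will take from the Perron--Frobenius theorem for irreducible nonnegative matrices one fact: $\rho(T)$ is an eigenvalue of $T$ with an eigenvector all of whose entries are strictly positive. This is the standard statement in \cite{BrouwerHaemers,HornJohnson}.

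First, $T^T$ is also nonnegative and irreducible, since its digraph is that of $T$ with all arcs reversed and so is still strongly connected. Also $\rho(T^T)=\rho(T)$, because $T$ and $T^T$ have the same characteristic polynomial. So there is a vector $y$ with every entry positive and $y^TT=\rho(T)\,y^T$, and similarly a vector $z$ with every entry positive and $Tz=\rho(T)\,z$.

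Now let $x\ge 0$, $x\neq 0$, satisfy $Tx=\mu x$. Multiplying on the left by $y^T$ gives
\[
\mu\, y^Tx = y^TTx = \rho(T)\, y^Tx .
\]
Because $y$ is strictly positive and $x$ is nonnegative and nonzero, we have $y^Tx>0$. Dividing by $y^Tx$ gives $\mu=\rho(T)$.

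For a nonnegative left eigenvector $w$, with $w^TT=\mu w^T$, the argument is symmetric. Multiplying on the right by $z$ gives $\mu\, w^Tz=\rho(T)\, w^Tz$, and $w^Tz>0$ forces $\mu=\rho(T)$.

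The main point to handle carefully is the strict positivity of the Perron vector, since it is what makes the pairing $y^Tx$ nonzero. This is exactly where irreducibility is used. Without it the claim fails: for $T=\mathrm{diag}(2,1)$ the vector $e_2\ge 0$ is an eigenvector for the eigenvalue $1\neq\rho(T)$. If a self-contained argument for positivity is wanted, I would note that $(I+T)^{n-1}$ is entrywise positive when $T$ is irreducible. Then a nonnegative nonzero eigenvector $v$ of $T$ for $\rho(T)$ satisfies $(1+\rho(T))^{n-1}v=(I+T)^{n-1}v>0$, so $v$ is strictly positive.
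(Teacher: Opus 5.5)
Your proof is correct. It is the standard argument, and it is essentially how this part of the Perron--Frobenius theorem is proved in Brouwer and Haemers: pair the nonnegative eigenvector with the strictly positive Perron vector of $T^T$ (respectively $T$), so that $\mu\,y^Tx=\rho(T)\,y^Tx$ with $y^Tx>0$. The paper itself gives no proof of this lemma and only cites that source.
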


	\par\addvspace{1.0\baselineskip}
	\noindent
	\begin{minipage}{\linewidth}
		\centering
		\normalfont
		
		\captionsetup[figure]{
			font=footnotesize,
			labelfont=normalfont,
			textfont=normalfont,
			labelsep=period,
			justification=centering,
			singlelinecheck=false,
			hypcap=false,
			skip=11pt,
			belowskip=0pt
		}
		
		\resizebox{0.92\linewidth}{!}{%
			\begin{tikzpicture}[
				x=.5bp,
				y=-.5bp,
				sg edge/.style={
					draw=black!78,
					line width=.70bp,
					line cap=round
				},
				sg negative/.style={
					sg edge,
					dash pattern=on 3bp off 2.2bp
				},
				sg oval/.style={
					draw=black!42,
					line width=.85bp
				},
				sg label/.style={
					inner sep=0pt,
					outer sep=0pt,
					font=\normalfont\fontsize{11.5bp}{13bp}\selectfont,
					yscale=.80,
					transform shape
				},
				sg name/.style={
					inner sep=0pt,
					outer sep=0pt,
					font=\normalfont\fontsize{15bp}{17bp}\selectfont
				}
				]
				
				\path[use as bounding box]
				(104,306) rectangle (1070,529);
				
				
				\coordinate (a1) at (173.5,369.5);
				\coordinate (a2) at (173.5,428);
				\coordinate (a3) at (252,360);
				\coordinate (a4) at (269.5,388.5);
				\coordinate (a5) at (256.5,425);
				
				\draw[sg oval]
				(173.5,399.5)
				ellipse[x radius=18.5,y radius=46];
				
				\draw[sg edge]
				(a1)--(a3)
				(a1)--(a4)
				(a1)--(a5)
				(a2)--(a3)
				(a2)--(a4);
				
				\draw[sg negative] (a4)--(a5);
				
				\foreach \p in {a1,a2,a3,a4,a5}
				\fill (\p) circle[radius=1.5bp];
				
				\foreach \y in {385.5,397,408.5}
				\fill[black!78]
				(173.5,\y) circle[radius=.8bp];
				
				\node[sg label,anchor=base east]
				at (153,374) {$v_1$};
				
				\node[sg label,anchor=base east,xscale=1.04]
				at (153,431) {$v_{d-2}$};
				
				\node[sg label,anchor=base west]
				at (249,347) {$v_{d-1}$};
				
				\node[sg label,anchor=base west]
				at (279,386) {$v_d$};
				
				\node[sg label,anchor=base west]
				at (257,445) {$v_{d+1}$};
				
				\node[sg label,anchor=base west]
				at (111,467) {$(K_{d-2},+)$};
				
				\node[
				sg name,
				anchor=base west,
				xscale=1.08,
				yscale=.94,
				transform shape
				] at (185,518) {$\dot{G}_{d+1}$};
				
				
				\coordinate (b1) at (492,362.5);
				\coordinate (b2) at (492,421);
				\coordinate (b3) at (575,362.5);
				\coordinate (b4) at (575,421);
				\coordinate (b5) at (643,383);
				
				\draw[sg oval]
				(492,392)
				ellipse[x radius=18.5,y radius=46];
				
				\draw[sg oval]
				(575,392)
				ellipse[x radius=18.5,y radius=46];
				
				\draw[sg edge]
				(b1)--(b3)
				(b1)--(b4)
				(b2)--(b3)
				(b2)--(b4)
				(b3)--(b5)
				(b4)--(b5);
				
				\draw[sg negative]
				(b2)
				.. controls (550,467) and (607,483) ..
				(b5);
				
				\foreach \p in {b1,b2,b3,b4,b5}
				\fill (\p) circle[radius=1.5bp];
				
				\foreach \x in {492,575}
				\foreach \y in {378.5,389.5,400.5}
				\fill[black!78]
				(\x,\y) circle[radius=.8bp];
				
				\node[sg label,anchor=base east]
				at (474,366) {$v_{{t_0}+1}$};
				
				\node[sg label,anchor=base east]
				at (474,428) {$v_d$};
				
				\node[sg label,anchor=base west]
				at (589,357) {$v_1$};
				
				\node[sg label,anchor=base west]
				at (589,434) {$v_{t_0}$};
				
				\node[sg label,anchor=base west]
				at (649,405) {$v_{d+1}$};
				
				\node[sg label,anchor=base west]
				at (393,459) {$(K_{d-{t_0}},+)$};
				
				\node[sg label,anchor=base west]
				at (545,333) {$(K_{t_0},+)$};
				
				\node[
				sg name,
				anchor=base west,
				xscale=1.13,
				yscale=1.05,
				transform shape
				] at (510,514) {$\dot{G}'_e$};
				
				
				\coordinate (c1) at (827.5,362.5);
				\coordinate (c2) at (827.5,421);
				\coordinate (c3) at (911,362.5);
				\coordinate (c4) at (911,421);
				\coordinate (c5) at (989,366);
				\coordinate (c6) at (996,413);
				
				\draw[sg oval]
				(827.5,392)
				ellipse[x radius=18.5,y radius=46];
				
				\draw[sg oval]
				(911,392)
				ellipse[x radius=18.5,y radius=46];
				
				\draw[sg edge]
				(c1)--(c3)
				(c1)--(c4)
				(c2)--(c3)
				(c2)--(c4)
				(c3)--(c5)
				(c3)--(c6)
				(c4)--(c5);
				
				\draw[sg negative] (c5)--(c6);
				
				\foreach \p in {c1,c2,c3,c4,c5,c6}
				\fill (\p) circle[radius=1.5bp];
				
				\foreach \x in {827.5,911}
				\foreach \y in {378.5,389.5,400.5}
				\fill[black!78]
				(\x,\y) circle[radius=.8bp];
				
				\node[sg label,anchor=base east]
				at (809,366) {$v_{t_0}$};
				
				\node[sg label,anchor=base east]
				at (809,428) {$v_d$};
				
				\node[sg label,anchor=base west]
				at (924,357) {$v_1$};
				
				\node[sg label,anchor=base west]
				at (929,434) {$v_{{t_0}-1}$};
				
				\node[sg label,anchor=base west]
				at (1002,366) {$v_{d+1}$};
				
				\node[sg label,anchor=base west]
				at (1014,418) {$v_{d+2}$};
				
				\node[sg label,anchor=base west]
				at (713,460) {$(K_{d-{t_0}+1},+)$};
				
				\node[sg label,anchor=base west]
				at (870,330) {$(K_{{t_0}-1},+)$};
				
				\node[
				sg name,
				anchor=base west,
				xscale=1.23,
				yscale=1.10,
				transform shape
				] at (937,514) {$\dot{G}''_e$};
				
			\end{tikzpicture}%
		}%
		
		\par
		
		\captionof{figure}{The signed graphs
			$\dot{G}_{d+1}$, $\dot{G}'_e$ and $\dot{G}''_e$.}
		\label{fig2}
		
	\end{minipage}
	\par\addvspace{1.0\baselineskip}

\begin{lemma} \cite{WangPengHou2026}\label{lemma2.6}.
	Let $d,{t_0}$ be positive integers and let $\dot{G}$ be a signed
	graph that maximizes the index in the set of unbalanced signed graphs of size $m$. Up to switching isomorphism,
\end{lemma}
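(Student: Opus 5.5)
Since Lemma~\ref{lemma2.6} is quoted from \cite{WangPengHou2026}, I would only reconstruct its proof, not reprove it in full. The expected conclusion is that, up to switching isomorphism, $\dot G$ is one of the signed graphs $\dot G_{d+1}$, $\dot G'_e$, $\dot G''_e$ of Figure~\ref{fig2}, with the choice determined by the decomposition of $m$.

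\textbf{Step 1: at most one negative edge meets the support.} By Lemma~\ref{lemma2.1} I switch so that $A(\dot G)$ has a nonnegative unit eigenvector $X$ for $\lambda_1$. Fix one negative cycle $C$. By Lemma~\ref{lemma2.2}(iii), reversing every negative edge outside one chosen negative edge $e_0$ of $C$ does not decrease $\lambda_1$. After a further switching, the resulting graph is still unbalanced, since $C$ then has exactly one negative edge. By maximality, equality must hold, so every other negative edge has both endpoints with zero entries of $X$. Those edges are irrelevant to the index, and using maximality together with connectivity of the support of $X$, I would move them so that $\dot G$ consists of an all-positive graph $H$ with $m-1$ edges plus the single negative edge $e_0=v_iv_j$.

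\textbf{Step 2: the positive part is a threshold graph.} On $H$ I would apply Kelmans/Rowlinson-type shifting, moving positive edges towards vertices with larger entries of $X$. Each shift is done only when it keeps some negative cycle through $e_0$, and by the Rayleigh quotient it does not decrease $\lambda_1$. This makes $H$ a threshold graph with the vertices ordered by their entries of $X$, in which $v_i,v_j$ have the smallest entries compatible with $e_0$ lying on a cycle. Combining this with Stanley's bound (Lemma~\ref{lemma2.4}), which shows that $H$ is close to a clique, the candidates reduce to the three configurations of Figure~\ref{fig2}:
\begin{itemize}
\item a near-complete graph with the negative edge inside it, giving $\dot G_{d+1}$;
\item an $H_{d,t_0}$-type graph whose pendant-like vertex carries the negative edge to the clique, giving $\dot G'_e$;
\item a negative triangle hanging on a clique-like core, giving $\dot G''_e$.
\end{itemize}

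\textbf{Step 3: compare the three candidates (main obstacle).} The hardest step is deciding, for each range of $t_0$ relative to $d$, which of the three candidates has the largest index. I would use Lemma~\ref{lemma2.3}(i) to eliminate the coordinates of the clique blocks, whose eigenvector entries are constant on equitable parts. This reduces each characteristic equation to a polynomial of degree at most $4$ in $\lambda$, built from quotient matrices. I would then compare the polynomials at the largest root of one of them, and the sign of their difference would give the thresholds in $t_0$. The delicate part is the near-tie cases, such as $t_0=1$ or $t_0=d-1$, where I expect explicit estimates of $\lambda_1$ near $d-1$ to be necessary.
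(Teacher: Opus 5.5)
The paper gives no proof of Lemma~\ref{lemma2.6}. It is quoted from Wang--Peng--Hou and used only as a black box in the proof of Theorem~\ref{thm2}(i): part~(2) for $m=4$, and part~(3) with $t_0=2$ for $m=\binom d2+3$, $d\ge4$, together with Remark~\ref{remark8}. So your outline has to stand on its own, and as a proof it has a genuine gap in Step~2, which is where essentially all of the lemma's content lies.

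Shifting edges toward larger entries of $X$ gives nested neighbourhoods only for shifts that keep some cycle through $e_0$. The ordering therefore breaks down exactly around the endpoints of the negative edge. Nested neighbourhoods plus Stanley's bound $\rho_m^2+\rho_m\le 2m$ also do not cut the candidates down to a short list. That bound is the same for every graph of size $m$, is attained only by cliques, and cannot separate threshold graphs of equal size. Even with all edges positive, this reduction is Rowlinson's theorem, not a consequence of Stanley's inequality. In this paper, the analogous passage from the nested structure of Claim~\ref{claim8} to $H\cong H_{d,t}$ takes all of Claim~\ref{claim9}: the Schur-complement reduction to $\sum_i g_{c'}(d_H(w_i))<\lambda^*$, the convexity Facts~1--2, and the case analysis on $\widetilde M$. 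Stanley's bound plays no role there. You also give no mechanism for locating the negative edge inside the nested structure, and that is exactly what separates $\dot{\mathcal K}^e_d$, $\dot G_{d+1}$, $\dot G'_e$ and $\dot G''_e$.

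Several specifics are also off.
\begin{itemize}
\item Your stated target omits part~(1) ($m=\binom d2$, extremal graph $\dot{\mathcal K}^e_d$). Your description \emph{a near-complete graph with the negative edge inside it} fits $\dot{\mathcal K}^e_d$, not $\dot G_{d+1}$, whose negative edge $v_dv_{d+1}$ joins the added vertex to the near-clique.
\item In Step~3, the quotient polynomial of $\dot G''_e$ has degree $5$, not at most $4$ (Remark~\ref{remark8}).
\item The near-tie is not at the ends of the range; $t_0=d-1$ is not even admissible in part~(3). It sits at the interior threshold $\theta=\frac12\bigl(3-2d+\sqrt{8d^2-8d+1}\bigr)\sim(\sqrt2-1)d$. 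The cited lemma itself leaves $t_0=\lfloor\theta\rfloor$ undecided, and this paper, needing only $t_0=2$, compares the two candidates directly via $xf_1(x)-f_2(x)=(d-3)(x+1)$.
\item In Step~1, sign reversal is not a switching. The equality case only tells you that the other negative edges of $\dot G$ have zero-entry endpoints. To conclude that $\dot G$ itself has a unique nontrivial component with one negative edge, you need a strictly improving move. For instance, delete such an edge and add a positive pendant edge at a vertex of maximum entry, checking that unbalancedness survives.
\item \emph{Connectivity of the support of $X$} is not automatic for signed graphs. A zero entry can be adjacent to positive ones through cancellation, and after Step~1 this can still happen at an endpoint of $e_0$.
\end{itemize}
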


\begin{flushleft}
	(1) if \(m=\binom{d}{2}\), then \(\dot{G}\) has a unique nontrivial component \(\dot{\mathcal{K}^{e}_{d}}\);\\
	(2) if \(m=\binom{d}{2}+1\), then \(\dot{G}\) has a unique nontrivial component \(\dot{G}_{d+1}\);\\
	(3) if \(m=\binom{d}{2}+t_0+1\), where \(1\leq t_0\leq d-2\), let
	\(\theta=\frac{1}{2}\left(3-2d+\sqrt{8d^{2}-8d+1}\right)\), we have\\
	 if \(1\leq t_0\leq \theta-1\), then \(\dot{G}\) has a unique nontrivial component \(\dot{G}^{\prime\prime}_{e}\);\\
	 if \(\theta\leq t_0\leq d-2\), then \(\dot{G}\) has a unique nontrivial component \(\dot{G}^{\prime}_{e}\);\\
	 if \(\theta\) is not integer and \(t_0=\lfloor\theta\rfloor\), then \(\dot{G}\) has a unique nontrivial component is \(\dot{G}^{\prime}_{e}\) or \(\dot{G}^{\prime\prime}_{e}\). See Figure \ref{fig2} for illustrations of $\dot G_{d+1}$, $\dot G_e'$, or $\dot G_e''$. 
\end{flushleft}

Define $E_{vv}=e_ve_v^T$, where $e_v$ is the standard basis vector corresponding to vertex $v$.

\begin{lemma}\label{lemma2.7}
Let $m-3=\binom{d}{2}+t$ with $d\ge1$ and $0\le t\le d-1$. Then
\begin{enumerate}[label=(\roman*)]
\item If $t=0$, then
$\lambda_1(\Gamma_m^*)^2+2\lambda_1(\Gamma_m^*)>2m-4$.
\item If $1\le t\le d-1$, then
$d-1+\frac{t^2(d+1)}{d^3+t^2}<\lambda_1(\Gamma_m^{**})<d$ and
$\lambda_1(\Gamma_m^{**})^2+2\lambda_1(\Gamma_m^{**})>2m-4$.
\end{enumerate}
\end{lemma}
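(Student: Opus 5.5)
The plan is to reduce every index computation to a perturbed all-positive graph using the Schur complement of Lemma~\ref{lemma2.3}(i), and then compare with explicit test vectors or quotient polynomials.

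\textbf{Reduction to $H$.} Order the vertices so that the block $S$ corresponds to $\{u',u''\}$. Then
\[
S=\begin{pmatrix}0&-1\\-1&0\end{pmatrix},
\]
and $R$ has exactly two nonzero entries, both equal to $1$, in row $u$. For $\lambda\ne\pm1$ one computes
\[
(S-\lambda I)^{-1}=\frac{1}{\lambda^2-1}\begin{pmatrix}-\lambda&1\\1&-\lambda\end{pmatrix},
\]
whose entries sum to $-2/(\lambda+1)$. Hence $R(S-\lambda I)^{-1}R^T=-\frac{2}{\lambda+1}E_{uu}$. By Lemma~\ref{lemma2.3}(i), $\lambda=\lambda_1(\Gamma)>1$ is the largest solution of
\[
\lambda=\lambda_1\Bigl(A(H)+\tfrac{2}{\lambda+1}E_{uu}\Bigr),
\]
where $H=K_d$ or $H=H_{d,t}$. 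In both cases $A(H)+\mu E_{uu}$ has an equitable partition: $\{u\}$ and $V(K_d)\setminus\{u\}$ when $t=0$; and $\{u\}$, $N(z)\setminus\{u\}$, $V(K_d)\setminus N(z)$, $\{z\}$ when $t\ge1$. So $\lambda_1$ is the largest root of an explicit quotient polynomial $F(\lambda)$ of size $2\times2$ or $4\times4$, with $\mu=2/(\lambda+1)$ substituted and the denominators cleared.

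\textbf{Part (i).} Here the quotient gives
\[
F(\lambda)=\Bigl(\lambda-\tfrac{2}{\lambda+1}\Bigr)(\lambda-d+2)-(d-1).
\]
This $F$ is increasing for $\lambda>d-2$, and $\lambda_1(\Gamma_m^*)>d-1$ by Lemma~\ref{lemma2.2}, since the Perron vector of $K_d$ is positive at $u$. It therefore suffices to show $F(\lambda_0)<0$ at $\lambda_0=-1+\sqrt{d^2-d+3}$. This $\lambda_0$ is the positive root of $\lambda^2+2\lambda=2m-4=d^2-d+2$. Substituting $\lambda_0^2=d^2-d+2-2\lambda_0$ turns $F(\lambda_0)<0$ into an inequality between a linear expression in $\lambda_0$ and a constant, which I would verify by squaring. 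The cases $d=1,2$ (the graphs $K_3$ and a triangle with a pendant edge) I would check directly.

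\textbf{Part (ii), index bounds.} For the lower bound, take the test vector $x$ with $x_v=1$ on $V(K_d)$ and $x_z=t/d$. Its Rayleigh quotient for $A(H_{d,t})$ is
\[
\frac{d^3(d-1)+2t^2d}{d^3+t^2}=d-1+\frac{t^2(d+1)}{d^3+t^2}.
\]
Since $x_u\neq0$ and $\mu>0$, this value is strictly less than $\lambda_1(A(H)+\mu E_{uu})=\lambda_1(\Gamma_m^{**})$. For the upper bound $\lambda_1<d$, I would show that the $4\times4$ quotient matrix with $\mu=2/(d+1)$ has $d$ strictly above its largest eigenvalue. Concretely, I would exhibit a positive vector $y$ with $(Q+\mu E_{uu})y<d\,y$ entrywise. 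Then the fixed-point equation forces $\lambda<d$, because $\mu(\lambda)$ is decreasing in $\lambda$.

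\textbf{Part (ii), quadratic inequality; main obstacle.} I would write $\lambda=d-1+\varepsilon$. The inequality $\lambda^2+2\lambda>d^2-d+2t+2$ becomes
\[
2(d+1)\varepsilon+\varepsilon^2>2t+3-d,
\]
which is trivial when $2t+3\le d$. Otherwise, I plug in $\varepsilon>t^2(d+1)/(d^3+t^2)$. This reduces the claim to the polynomial inequality
\[
2t^2(d+1)^2>(2t+3-d)(d^3+t^2)
\]
on the range $(d-3)/2<t\le d-1$. I expect this to hold except possibly for small $d$ or $t$ near $d/2$. That is where I anticipate difficulty. If it fails there, I will strengthen the lower bound: either use the $\mu E_{uu}$ term, via a test vector that also carries weight on $u'$ and $u''$, or evaluate the quotient polynomial $F$ at $\lambda_0=-1+\sqrt{2m-3}$ directly, as in part (i). The finitely many remaining small cases I would check numerically.
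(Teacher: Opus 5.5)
The gap is in the last step of (ii), the inequality $\lambda_1(\Gamma_m^{**})^2+2\lambda_1(\Gamma_m^{**})>2m-4$, which carries the real content of the lemma.

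First, your expansion is off. With $\lambda=d-1+\varepsilon$ one has $\lambda^2+2\lambda=(d+\varepsilon)^2-1=d^2-1+2d\varepsilon+\varepsilon^2$. So the target is $2d\varepsilon+\varepsilon^2>2t+3-d$, not $2(d+1)\varepsilon+\varepsilon^2>2t+3-d$. The cleared condition, after dropping $\varepsilon^2$, is $2d(d+1)t^2>(2t+3-d)(d^3+t^2)$.

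Second, and more seriously, with the correct target the bound $\varepsilon>\varepsilon_0:=t^2(d+1)/(d^3+t^2)$ is genuinely too weak. Your argument needs $d-1+\varepsilon_0\ge\xi:=-1+\sqrt{2m-3}$. This fails exactly for $(d,t)\in\{(2,1),(3,1),(3,2),(4,1),(4,2),(5,2)\}$, even if $\varepsilon_0^2$ is kept. For example, at $(5,2)$ one has $d-1+\varepsilon_0=4+\tfrac{24}{129}\approx4.186<-1+\sqrt{27}\approx4.196$.

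Your erroneous inequality holds at $(3,2)$ and $(5,2)$ ($128>124$ and $288>258$). So it would certify two cases your method cannot reach, and your list of remaining small cases to check numerically would come out wrong.

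For the infinite family you only say you expect the inequality to hold. After the correction it can be written as $d^2\bigl(t^2+(d-t)^2\bigr)+t^2(3d-3-2t)-3d^3>0$. This is immediate for $d\ge7$ (the left side is at least $d^3(d-6)/2$) and checkable for $d=6$. None of this is in the proposal, and neither fallback is carried out.

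The paper's proof is in effect your first fallback, made uniform. It takes the Rayleigh quotient of $A(\Gamma_m^{**})$ at the vector $\mathbf v$ equal to $1$ on $V(K_d)$, $t/\xi$ at $z$, and $1/(\xi+1)$ at $u',u''$. These are the optimal weights at level $\xi$. Hence $\mathbf v^T(A(\Gamma_m^{**})-\xi I)\mathbf v=\mathbf x^T\bigl(A(H_{d,t})+\tfrac{2}{\xi+1}E_{uu}-\xi I\bigr)\mathbf x$, where $\mathbf x$ is the restriction of $\mathbf v$ to $V(H_{d,t})$. The diagonal term is thus used quantitatively and evaluated at the threshold, whereas your bound uses $\mu>0$ only for strictness. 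This gives $f(d,t)=d(d-1)-d\xi+t^2/\xi+2/(\xi+1)$. The identity $d(d-1)-d\xi=\tfrac12\bigl(d-2t-3+(\xi-d+1)^2\bigr)$ and $\xi<d$ then yield $f(d,t)>\frac{(t-d/2)^2}{d}+\frac{d^2-5d+2}{4(d+1)}>0$ for $d\ge5$. The cases $d=2,3,4$ are checked exactly.

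Two smaller points. In the upper bound you never produce $y$. The natural choice $y=(1,1,1,y_z)$ would need $t/d<y_z<(d-1)/(d+1)$, which is impossible for $t=d-1$. The paper instead uses Lemma~\ref{lemma2.3}(iv) with the explicit entry $[(dI-A(H_{d,t}))^{-1}]_{uu}$. In (i), your check $F(\lambda_0)<0$ is automatic for $d\ge3$, since $\lambda_0=-1+\sqrt{d^2-d+3}\le d-1<\lambda_1(\Gamma_m^*)$; this is the paper's one-line argument.
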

\noindent\emph{Proof.} (i)
	If $t=0$, then $m-3=\binom{d}{2}$, i.e., $2m-6=d(d-1)$. Set $\mu=\lambda_1(\Gamma_m^*)$ for convenience. 
	It is readily seen that $\mu>d-1$, and hence
	$
	\mu^2+2\mu>(d-1)^2+2(d-1)=d^2-1\ge d(d-1)+2=2m-4
	$
	for $d\ge3$. If $d=2$, i.e., $m=4$, then $\Gamma_4^*$ contains a negative triangle with a single positive pendant edge attached at $u$ as a signed subgraph. Thus $\mu^2+2\mu\ge \left(\frac{10}{7}\right)^2 + 2\cdot \frac{10}{7}=240/49>4=2m-4$. If $d=1$, then $m=3$ and $\Gamma_3^*$ is switching equivalent to a negative triangle with exactly a single negative edge with index $\mu=1$. So $\mu^2+2\mu=3>2=2m-4$.
The preceding analysis establishes $\lambda_1(\Gamma_m^{*})^2+2\lambda_1(\Gamma_m^{*})>2m-4$.

 (ii) Label the vertices of $\Gamma_m^{**}$ as $u_1,\ldots,u_d,z,u',u''$, where $u_1=u$ and
$N(z)=\{u_1,\ldots,u_t\}$. For convenience, let $\lambda^*=\lambda_1(\Gamma_m^{**})$. We first prove the lower bound for $\lambda^*$.
Let $\mathbf r$ be a $(d+3)$‑dimensional column vector
such that $r_{u_i}=1$ for $1\le i\le d$ and $r_z=r_{u'}=r_{u''}=0$. Then $\mathbf r^T\mathbf r=d$ and
$\lambda^*\ge\frac{\mathbf r^TA(\Gamma_m^{**})\mathbf r}
{\mathbf r^T\mathbf r}=d-1$.
If $\lambda^*=d-1$, then
$(A(\Gamma_m^{**})\mathbf r)_{u'}=(d-1)r_{u'}=0$.
However,
$(A(\Gamma_m^{**})\mathbf r)_{u'}
=\sigma(u'u)r_u+\sigma(u'u'')r_{u''}=1\cdot1+(-1)\cdot0=1$,
a contradiction. Hence $\lambda^*>d-1$.
Set $h=\lambda^*-(d-1)>0$. We next give a lower bound for $h$.
 For computational convenience, we further define $a_0=\lambda^*+1$ and $m^*=a_0^2-2$, so $h=a_0-d$. Then
\begingroup
\newsavebox{\LemmaSevenMatrixBody}
\newsavebox{\LemmaSevenRowBrace}
\setlength{\arraycolsep}{3pt}
\renewcommand{\arraystretch}{1}

\sbox{\LemmaSevenMatrixBody}{$
	\begin{array}{ccccccc|c|cc}
		0 & 1 & \cdots & 1 & 1 & \cdots & 1 & 1 & 1 & 1 \\
		1 & 0 & \cdots & 1 & 1 & \cdots & 1 & 1 & 0 & 0 \\
		\vdots & \vdots & \ddots & \vdots & \vdots
		& & \vdots & \vdots & \vdots & \vdots \\
		1 & 1 & \cdots & 0 & 1 & \cdots & 1 & 1 & 0 & 0 \\
		1 & 1 & \cdots & 1 & 0 & \cdots & 1 & 0 & 0 & 0 \\
		\vdots & \vdots & & \vdots & \vdots
		& \ddots & \vdots & \vdots & \vdots & \vdots \\
		1 & 1 & \cdots & 1 & 1 & \cdots & 0 & 0 & 0 & 0 \\ \hline
		1 & 1 & \cdots & 1 & 0 & \cdots & 0 & 0 & 0 & 0 \\ \hline
		1 & 0 & \cdots & 0 & 0 & \cdots & 0 & 0 & 0 & -1 \\
		1 & 0 & \cdots & 0 & 0 & \cdots & 0 & 0 & -1 & 0
	\end{array}
	$}

\sbox{\LemmaSevenRowBrace}{$
	t\left\{
	\vphantom{\begin{array}{c}0\\1\\\vdots\\1\end{array}}
	\right.
	$}

\[
A(\Gamma_m^{**})=\quad
\begin{array}{@{}c@{}}
	\begin{array}{@{}c@{}c@{}c@{}}
		\overbrace{\hphantom{\begin{array}{ccccccc}
					0 & 1 & \cdots & 1 & 1 & \cdots & 1
		\end{array}}}^{V(K_d)}
		&
		\overbrace{
			\hphantom{\begin{array}{|c|}1\end{array}}
		}^{\{z\}}
		&
		\overbrace{
			\hphantom{\begin{array}{cc}-1 & -1\end{array}}
		}^{\{u',u''\}}
	\end{array}
	\\[1mm]
	\mathord{\llap{%
			\raisebox{
				\dimexpr\ht\LemmaSevenMatrixBody-\ht\LemmaSevenRowBrace\relax
			}{%
				\usebox{\LemmaSevenRowBrace}%
			}\kern0.5em
	}}%
	\left(\usebox{\LemmaSevenMatrixBody}\right)
\end{array}
=\begin{pmatrix}
	A(K_d)&\mathbf v_t&B\\
	\mathbf v_t^T&0&\mathbf0\\
	B^T&\mathbf0&A_{u'u''}
\end{pmatrix},
\]
\endgroup
where
\[
\begin{gathered}
	A(K_d)=J_d-I_d,\
	\mathbf v_t=(\underbrace{1,\ldots,1}_{t},0,\ldots,0)^T\in\mathbb R^d,\
	B=\begin{pmatrix}
		1&1\\
		0&0\\
		\vdots&\vdots\\
		0&0
	\end{pmatrix}_{d\times2},\
	A_{u'u''}=\begin{pmatrix}0&-1\\-1&0\end{pmatrix}.
\end{gathered}
\]
Let $\mathbf p$ denote a nonzero eigenvector corresponding to
$\lambda^*$, where $\mathbf p_{K_d}$ and $\mathbf p_{u'u''}$ are
its restrictions to $V(K_d)$ and $\{u',u''\}$, respectively, and
$p_z$ is its coordinate at $z$.
By $A(\Gamma_m^{**})\mathbf p=\lambda^*\mathbf p$,
\begin{empheq}[left=\empheqlbrace]{align}
	A(K_d)\mathbf p_{K_d}+\mathbf v_tp_z+B\mathbf p_{u'u''}
	&=\lambda^*\mathbf p_{K_d}, \tag{1}\\
	\mathbf v_t^T\mathbf p_{K_d}
	&=\lambda^*p_z, \tag{2}\\
	B^T\mathbf p_{K_d}+A_{u'u''}\mathbf p_{u'u''}
	&=\lambda^*\mathbf p_{u'u''}. \tag{3}
\end{empheq}
Since $\lambda^*>1$, the matrix $A_{u'u''}-\lambda^*I_2$ is
invertible. From equations (1) and (3), we derive
\[\left[A(K_d)-B(A_{u'u''}-\lambda^*I_2)^{-1}B^T\right]
\mathbf p_{K_d}
=\left[A(K_d)+\frac2{a_0}E_{11}\right]\mathbf p_{K_d}\\
=\lambda^*\mathbf p_{K_d}-\mathbf v_tp_z.\]
 Define
\[
Q_d:=\lambda^*I_d-A(K_d)-\frac2{a_0}E_{11}
=a_0I_d-J_d-\frac2{a_0}E_{11}.
\]
Then $Q_d\mathbf p_{K_d}=\mathbf v_tp_z$.
Since $\lambda^*I-A(\Gamma_m^{**})$ is positive semidefinite and
singular, and $\lambda^*I_2-A_{u'u''}$ is positive definite, its Schur
complement
$
\lambda^*I_{d+1}-A(H_{d,t})-\frac2{a_0}E_{uu}
$ is positive semidefinite and singular.
Note that the matrix $A(H_{d,t})+\frac2{a_0}E_{uu}$ is nonnegative and irreducible.
Let $\mathbf a=(\mathbf p_{K_d}^T,p_z)^T\ne\mathbf0$, and let
$\mathbf b$ denote the vector whose entries are the absolute values of the corresponding entries of $\mathbf a$. Then 
\[\lambda^*\|\mathbf a\|^2
=\mathbf a^T(A(H_{d,t})+\frac2{a_0}E_{uu})\mathbf a
\le\mathbf b^T(A(H_{d,t})+\frac2{a_0}E_{uu})\mathbf b
\le\lambda^*\|\mathbf b\|^2=\lambda^*\|\mathbf a\|^2.\]
Thus 
$(A(H_{d,t})+\frac2{a_0}E_{uu})\mathbf b=\lambda^*\mathbf b$.
Since $\mathbf{b}$ is a nonnegative eigenvector, Lemma~\ref{lemma2.5} yields
$\lambda^*=\rho(A(H_{d,t})+\frac2{a_0}E_{uu})$.
Since $A(K_d)+\frac2{a_0}E_{11}$ is a proper principal submatrix of $A(H_{d,t})+\frac2{a_0}E_{uu}$, the Perron--Frobenius theorem implies 
$
\rho\!\left(A(K_d)+\frac2{a_0}E_{11}\right)$ $<\lambda^*
$.
If $\mu_1\ge\cdots\ge\mu_d$ are the eigenvalues of $A(K_d)+\frac2{a_0}E_{11}$, then the eigenvalues of $Q_d$ are
$\lambda^*-\mu_i>0$. Thus $Q_d$ is positive definite.
Recall $Q_d\mathbf p_{K_d}=\mathbf v_tp_z$.
If $p_z=0$, then $\mathbf p_{K_d}=Q_d^{-1}\mathbf v_tp_z=\mathbf0$, and 
$\mathbf p_{u'u''}=-(A_{u'u''}-\lambda^*I_2)^{-1}B^T\mathbf p_{K_d}=\mathbf0$ by equation (3),
contrary to $\mathbf p\ne\mathbf0$.
Thus $p_z\ne0$.
Substituting  $\mathbf p_{K_d}=Q_d^{-1}\mathbf v_tp_z$ into equation (2) , we obtain  $\lambda^*=\mathbf v_t^TQ_d^{-1}\mathbf v_t$.
Set $P_d=a_0I_d-J_d$.
Since $h>0$, $P_d$ is positive definite with inverse
$
P_d^{-1}=\frac1{a_0}I_d+\frac1{a_0h}J_d$. As $Q_d=P_d-\frac2{a_0}e_1e_1^T$ is positive definite,
 Lemma~\ref{lemma2.3}(iv) implies
$
1-\frac2{a_0}e_1^TP_d^{-1}e_1
=\frac{hm^*-2}{a_0^2h}>0.$ By Lemma~\ref{lemma2.3}(iii),
\[	Q_d^{-1}
=P_d^{-1}
+\frac{\frac2{a_0}P_d^{-1}e_1e_1^TP_d^{-1}}
{1-\frac2{a_0}e_1^TP_d^{-1}e_1}\\
=\frac1{a_0}I_d+\frac1{a_0h}J_d
+\frac{2(he_1+\mathbf1)(he_1+\mathbf1)^T}
{a_0h(hm^*-2)}.\]
Since $\mathbf v_t^T(he_1+\mathbf1)=h+t$,
\[\lambda^*
=\frac{t(h+t)}{a_0h}
+\frac{2(h+t)^2}{a_0h(hm^*-2)}\\
=\frac{h+t}{a_0h}
\left(t+\frac{2(h+t)}{hm^*-2}\right)
=\frac{(h+t)(tm^*+2)}{a_0(hm^*-2)}.\]
Thus $a_0\lambda^*(hm^*-2)=(h+t)(tm^*+2)$, or equivalently,
\[
m^*\bigl(a_0\lambda^*h-t(h+t)\bigr)
=2(a_0\lambda^*+h+t)>0. \tag{4}
\]
Let $h^*=a_0\lambda^*h-t(h+t)$.
Since $m^*>0$, the equation (4) forces
$
h^*>0$ and
$m^*=\frac{2(a_0\lambda^*+h+t)}{h^*}.
$ Recall that $H_{d,t}$ is obtained from $K_d$ by adding one vertex adjacent to exactly $t$ vertices of $K_d$.
Let $\mathbf q=\bigl(\underbrace{1,\ldots,1}_{d},\tfrac{t}{d}\bigr)^T$ be a $(d+1)$-dimensional vector.
Then
$
\mathbf q^T\mathbf q = d+\frac{t^2}{d^2}
$ and
\[\rho(H_{d,t})
\ge\frac{\mathbf q^TA(H_{d,t})\mathbf q}{\mathbf q^T\mathbf q}
=\frac{d(d-1)+2t^2/d}{d+t^2/d^2}\\
=d-1+\frac{t^2(d+1)}{d^3+t^2}.\]
Let $\mathbf h$ be a positive Perron vector of $A(H_{d,t})$. In particular, $h_u>0$.
Construct the extended vector
$\widetilde{\mathbf h}=(\mathbf h^T,0,0)^T$. Then

\[
A(\Gamma_m^{**})\widetilde{\mathbf h}
=\begin{pmatrix}
	\rho(H_{d,t})\mathbf h\\
	h_u\\
	h_u
\end{pmatrix}\text{and }\lambda^*\ge
\frac{\widetilde{\mathbf h}^{\,T}A(\Gamma_m^{**})
	\widetilde{\mathbf h}}
{\widetilde{\mathbf h}^{\,T}\widetilde{\mathbf h}}
=\rho(H_{d,t}).
\]
If $\lambda^*=\rho(H_{d,t})$, then $\widetilde{\mathbf h}$ must be an eigenvector of $A(\Gamma_m^{**})$ corresponding to $\lambda^*$. However, 
$0=\lambda^*\widetilde h_{u'}
=(A(\Gamma_m^{**})\widetilde{\mathbf h})_{u'}=h_u>0$,
a contradiction. Therefore,
\[
\lambda^*>\rho(H_{d,t})
\ge d-1+\frac{t^2(d+1)}{d^3+t^2},\] i.e.,
\[h=\lambda^*-(d-1)>\frac{t^2(d+1)}{d^3+t^2}.
\]

We now prove $\lambda^*<d$.
Since $H_{d,t}$ is a connected graph with $\Delta = d$ and is not
$d$-regular, $\rho(H_{d,t})<d$ and $dI-A(H_{d,t})$ is positive definite. Let
\[
dI-A(H_{d,t})
=\begin{pmatrix}S&-\mathbf v_t\\-\mathbf v_t^T&d\end{pmatrix},\]
where $S=(d+1)I_d-J_d.$ So
\[
\begin{aligned}
	\left[(dI-A(H_{d,t}))^{-1}\right]_{uu}
	&=e_1^TS^{-1}e_1+(e_1^TS^{-1}\mathbf v_t)^2
	(d-\mathbf v_t^TS^{-1}\mathbf v_t)^{-1}\\
	&=\frac2{d+1}
	+\left(\frac{t+1}{d+1}\right)^2
	\frac{d+1}{(d-t)(d+t+1)}\\
	&=\frac2{d+1}
	+\frac{(t+1)^2}{(d+1)(d-t)(d+t+1)}.
\end{aligned}
\]
Since $(d-t)(d+t+1)\ge2d$ and $(t+1)^2\le d^2$,
\[
\begin{aligned}
	\frac2{d+1}\left[(dI-A(H_{d,t}))^{-1}\right]_{uu}
	&=\frac4{(d+1)^2}
	+\frac{2(t+1)^2}{(d+1)^2(d-t)(d+t+1)}\\
	&\le\frac4{(d+1)^2}+\frac d{(d+1)^2}
	=\frac{d+4}{(d+1)^2}<1.
\end{aligned}
\]
By Lemma~\ref{lemma2.3}(iv), the matrix
$dI-A(H_{d,t})-\frac2{d+1}E_{uu}$ is positive definite and $
\rho\!\left(A(H_{d,t})+\frac2{d+1}E_{uu}\right)<d.$ Suppose for contradiction that
$\lambda^*\ge d$. Then
\[
\lambda^*
=\rho\!\left(A(H_{d,t})+\frac2{\lambda^*+1}E_{uu}\right)
\le\rho\!\left(A(H_{d,t})+\frac2{d+1}E_{uu}\right)<d,
\]
a contradiction. Thus $d-1<\lambda^*<d$.

It remains to verify $(\lambda^{*})^2 + 2\lambda^{*} > 2m - 4$.
 Set $\xi=-1+\sqrt{2m-3}$.
Since $m-3\ge0$, we have $\xi>0$ and $\xi^2+2\xi=2m-4$. Define $\mathbf v=\biggl(\,\underbrace{1,\ldots,1}_{V(K_d)},\; \underbrace{\frac{t}{\xi}}_{\{z\}},\; \underbrace{\frac{1}{\xi+1},\frac{1}{\xi+1}}_{\{u',u''\}}\,\biggr)^T$,
and
\[
f(d,t):=\mathbf v^T(A(\Gamma_m^{**})-\xi I)\mathbf v
=d(d-1)-d\xi+\frac{t^2}{\xi}+\frac2{\xi+1}.
\]
We now show that $f(d,t)>0$. First suppose $d\ge5$.
From $(\xi+1)^2=d(d-1)+2t+3$, we have
$(\xi-d+1)^2+2d(\xi-d+1)=2t+3-d$, which yields
\[
d(d-1)-d\xi
=\frac{d-2t-3+(\xi-d+1)^2}{2}
\ge\frac{d-2t-3}{2}.
\]
Also,
\[
(\xi+1)^2\le d(d-1)+2(d-1)+3
=d^2+d+1<(d+1)^2,
\]
so $\xi<d$. Therefore
\[
\begin{aligned}
	f(d,t)
	&=d(d-1)-d\xi+\frac{t^2}{\xi}+\frac2{\xi+1}\\
	&>\frac{d-2t-3}{2}+\frac{t^2}{d}+\frac2{d+1}\\
	&=\frac{(t-d/2)^2}{d}
	+\frac{d^2-5d+2}{4(d+1)}\\
	&>0.
\end{aligned}
\]
For $d=2,3,4$, only finitely many values of $t$ remain.
If $d=2$, the minimum is attained at $t=1$, and
$
\min_{t\in\{1\}}f(2,t)=f(2,1)
=\frac{25}{6}-\frac{65\sqrt7}{42}>0.
$
If $d=3$, the minimum is attained at $t=1$, and
$
\min_{t\in\{1,2\}}f(3,t)=f(3,1)
=\frac{91}{10}-\frac{299\sqrt{11}}{110}>0.
$
If $d=4$, the minimum is attained at $t=2$, and
$
\min_{t\in\{1,2,3\}}f(4,t)=f(4,2)
=\frac{146}{9}-\frac{628\sqrt{19}}{171}>0.
$
Thus $f(d,t)>0$ in every case, and
\[
\lambda^*\ge
\frac{\mathbf v^TA(\Gamma_m^{**})\mathbf v}{\mathbf v^T\mathbf v}
=\xi+\frac{f(d,t)}{\mathbf v^T\mathbf v}>\xi.
\]
Consequently, $(\lambda^*)^2+2\lambda^*>\xi^2+2\xi=2m-4$.

\section{Proof of Theorem \ref{thm1}}
In this section, we first show that, up to switching isomorphism and isolated vertices,
any extremal graph is connected and contains exactly a single negative edge.
We next prove that the two endpoints of this unique negative edge have a unique common neighbor and have no other positive neighbors. Taken together, these facts yield the asserted structure.

\medskip
\noindent \emph{\textbf{Proof of Theorem \ref{thm1}}.}
Let $\Gamma\in\mathcal F_m$ attain the maximum index.
If $m=3$, then every member of $\mathcal F_3$ is a negative triangle apart from isolated vertices. So the statement holds trivially.
Assume $m\ge4$ henceforward.
By Lemma~\ref{lemma2.1}, there exists a signed graph $\widetilde{\Gamma}$ switching equivalent to $\Gamma$ such that an eigenvector corresponding to $\lambda_1(\widetilde{\Gamma})$ is nonnegative. Normalize this eigenvector to have unit norm and denote it by $\mathbf q$ for $\lambda=\lambda_1(\widetilde{\Gamma})$, and let $q_v$ denote the entry of $\mathbf q$ corresponding to $v$.

\begin{claim}\label{claim1}	Apart from isolated vertices, $\widetilde{\Gamma}$ is connected.
\end{claim}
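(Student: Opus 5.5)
We argue by contradiction. Suppose that, after discarding isolated vertices, $\widetilde{\Gamma}$ has at least two nontrivial components. The index of a disjoint union is the maximum of the indices of its components. Hence $\lambda$ is attained on some component $\Gamma_1$. Because $\widetilde{\Gamma}$ is unbalanced, at least one component, say $\Gamma_2$, is unbalanced. Two cases arise: either $\Gamma_1$ is unbalanced (possibly $\Gamma_1=\Gamma_2$), or $\Gamma_1$ is balanced and some other component is unbalanced. In each case we build a connected $C_4^-$-free unbalanced signed graph with exactly $m$ edges and index strictly larger than $\lambda$. This contradicts extremality.

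\emph{Case 1: $\Gamma_1$ is unbalanced.} Then $\Gamma_1$ contains a negative cycle, and every other nontrivial component $\Gamma_j$ contributes $|E(\Gamma_j)|\ge 1$ edges that play no role in $\lambda$. We delete one edge $e$ from some other component $\Gamma_j$ and add a new positive pendant edge $vw$, where $v$ is a vertex of $\Gamma_1$ with $q_v>0$ and $w$ is a new vertex (or an isolated vertex). The edge count is unchanged. A pendant edge creates no new cycle, so the graph stays $C_4^-$-free. It also stays unbalanced, because the negative cycle of $\Gamma_1$ survives. Since $q$ is a Perron-type eigenvector of $\Gamma_1$, we may take its support to be all of $\Gamma_1$. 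Then Lemma~\ref{lemma2.2}(i), applied to the restriction, shows that the index of $\Gamma_1+vw$ exceeds $\lambda$: equality would force $q_v=0$. Removing $e$ from a different component cannot lower this index below that of $\Gamma_1+vw$.

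\emph{Case 2: $\Gamma_1$ is balanced.} We switch $\Gamma_1$ to become all-positive, so that $\lambda=\rho(G_1)$. Let $\Gamma_2$ be an unbalanced component. We take a vertex $v\in\Gamma_1$ with $q_v>0$ and a vertex $w\in\Gamma_2$. We delete an edge of $\Gamma_2$ that does not lie on some fixed negative cycle (for instance, a shortest negative cycle). If instead $\Gamma_2$ is exactly a negative cycle, we delete an edge from a third component if one exists. We then add the positive bridge $vw$. A bridge lies on no cycle, so no negative $4$-cycle is created and unbalancedness is preserved. By Lemma~\ref{lemma2.2}(i), applied to the union with the eigenvector extended by zero, the index rises strictly above $\lambda$.

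The remaining subcase is when the only components are a balanced $\Gamma_1$ carrying $\lambda$ and an unbalanced component $\Gamma_2$ that is a bare negative cycle $C_\ell$ with $\ell\ne4$. If $\ell\ge5$, we move one cycle edge to a pendant at $v$. Otherwise $\Gamma_2$ is a negative triangle, and we compare directly: $\lambda=\rho(G_1)$ with $|E(G_1)|=m-3$. By Lemma~\ref{lemma2.4}, $\lambda^2+\lambda\le 2(m-3)$. But Lemma~\ref{lemma2.7} shows that $\Gamma_m^*$ or $\Gamma_m^{**}$ lies in $\mathcal F_m$ and satisfies $\mu^2+2\mu>2m-4$. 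If $\lambda$ were maximal, we would have $\lambda\ge\mu$, hence $\lambda^2+2\lambda>2m-4$. Combined with $\lambda^2+\lambda\le 2m-6$, this forces $\lambda>2$, and we still need a sharper comparison to finish.

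The step I expect to be the main obstacle is this last comparison. A cleaner route is to avoid it: take $\Gamma_1\cup C_3^-$ and identify one vertex of the triangle with the vertex $v$ of $\Gamma_1$ that maximizes $q_v$. This keeps $m$ edges and creates no $4$-cycle through the triangle unless it used two triangle edges, which would form a triangle, not a $4$-cycle. The resulting graph is exactly of the form "attach a negative triangle" and is $C_4^-$-free. Its index strictly exceeds $\rho(G_1)$, by the Schur complement computation in Lemma~\ref{lemma2.7}, which shows that the diagonal perturbation $\frac{2}{\lambda+1}E_{vv}$ strictly increases the Perron root. This contradiction shows that $\widetilde{\Gamma}$ is connected apart from isolated vertices.
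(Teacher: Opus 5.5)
\textbf{Where the proposal breaks.} The failing step is the subcase where the only nontrivial components are a balanced $\Gamma_1$ carrying $\lambda$ and a bare negative cycle $\Gamma_2=C_\ell$ with $\ell\ge5$. There $C_\ell$ is the only negative cycle of $\widetilde{\Gamma}$. Moving one of its edges to a pendant at $v$ therefore destroys unbalancedness: you are left with $\Gamma_1$ plus a pendant edge, together with a path. That graph is balanced, so it is not in $\mathcal F_m$, and its larger index contradicts nothing. Your Case~1 is sound once $\Gamma_1$ is chosen with $\mathbf q|_{V(\Gamma_1)}\ne\mathbf 0$. The bridge construction in Case~2 is also sound. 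Your first attempt at the triangle subcase via Lemmas~\ref{lemma2.4} and \ref{lemma2.7} is, as you say, inconclusive.

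\textbf{The repair.} The identification you use at the very end works in every case, and this is exactly the paper's proof. Take a component $\Gamma_1$ with $\mathbf q|_{V(\Gamma_1)}\ne\mathbf 0$, so that $\lambda_1(\Gamma_1)=\lambda$. Take any other nontrivial component $\Gamma_2$, a vertex $u\in V(\Gamma_1)$ with $q_u>0$, and any $w\in V(\Gamma_2)$, and identify $u$ with $w$. Since $u$ and $w$ lie in different components, no parallel edges arise and the merged vertex is a cut vertex. Hence the signed cycles of the new graph are exactly those of $\widetilde{\Gamma}$. So the edge count, unbalancedness and $C_4^-$-freeness are preserved, whichever component is unbalanced and however long its negative cycles are. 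Now extend $\mathbf q|_{V(\Gamma_1)}$ by zero. Its Rayleigh quotient is $\lambda$, but it is not an eigenvector: at a neighbor $z$ of $w$ in $\Gamma_2$, its image has entry $\sigma(zw)q_u\ne0$. So the index strictly increases. This one construction makes your case split unnecessary, and it replaces the Schur-complement appeal to Lemma~\ref{lemma2.7} with a plain Rayleigh-quotient argument.

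\textbf{Two smaller points.} First, no negative $4$-cycle appears after identification simply because no new cycle of any length appears; your explanation via two triangle edges is confused. Second, your claim that $\mathbf q$ may be taken with full support on $\Gamma_1$ is unjustified for unbalanced signed graphs, where Perron--Frobenius does not apply. You only ever use one positive entry, so this does not affect the argument.
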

\noindent \emph{Proof.} Otherwise, choose two nontrivial connected components $\Gamma_1,\Gamma_2$ such that the restriction $\mathbf q_1=\mathbf q|_{V(\Gamma_1)}\ne\mathbf0$ and $\lambda_1(\Gamma_1)=\lambda_1(\widetilde{\Gamma})=\lambda$. Choose $u\in V(\Gamma_1)$ such that
$q_u=\max_{v\in V(\Gamma_1)}q_v>0$, and let
$w\in V(\Gamma_2)$.
We construct a new signed graph $\Gamma'$ from $\widetilde{\Gamma}$ by identifying $u$ and $w$ into a single vertex $x'$. The sign function $\sigma'$ on $\Gamma'$ is naturally inherited: every edge not incident to $x'$ retains its original sign, and edges incident to the new vertex $x'$ inherit the signs from the corresponding edges incident to $u$ or $w$ in $\widetilde{\Gamma}$. Clearly, $|E(\Gamma')|=|E(\widetilde{\Gamma})|=m$, and $\Gamma'$ contains no new cycles. So $\Gamma'\in\mathcal F_m$. Extend $\mathbf q_1$ to a new vector
$\widetilde{\mathbf q}_1=(\tilde q_v)_{v\in V(\Gamma')}$, where $\tilde q_v = q_v$ for all $v\in V(\Gamma_1)\setminus \{u\}$, $\tilde q_{x'}= q_u$, and $\tilde q_v = 0$ for all other vertices.
Then
$\widetilde{\mathbf q}_1^{\,T}\widetilde{\mathbf q}_1
=\mathbf q_1^T\mathbf q_1$ and $\lambda_1(\Gamma') \ge \frac{\widetilde{\mathbf q}_1^T A(\Gamma') \widetilde{\mathbf q}_1}{\widetilde{\mathbf q}_1^T \widetilde{\mathbf q}_1} = \frac{\mathbf q_1^T A(\Gamma_1) \mathbf q_1}{\mathbf q_1^T \mathbf q_1} = \lambda.$
If $\lambda_1(\Gamma')=\lambda$, then $\widetilde{\mathbf q}_1$ is an eigenvector of $A(\Gamma')$ corresponding to $\lambda$. However, for any $z\in N_{\Gamma_2}(w)$, we get
\[
\begin{aligned}
	0=\lambda\tilde q_z
	&=(A(\Gamma')\widetilde{\mathbf q}_1)_z
	=\sum_{v\in N_{\Gamma'}(z)}\sigma'(zv)\tilde q_v\\
	&=\sigma'(zx')\tilde q_{x'}
	=\sigma(zw)q_u\ne0,
\end{aligned}
\]
a contradiction. Hence
$\lambda_1(\Gamma')>\lambda=\lambda_1(\widetilde{\Gamma})$,
which contradicts the extremality of $\widetilde{\Gamma}$.
Therefore, apart from isolated vertices,
$\widetilde{\Gamma}$ is connected.

We henceforth discard isolated vertices and retain the same notation. Define $E^+(\Gamma)=\{e\in E(G):\sigma(e)=+1\}$ and $E^-(\Gamma)=\{e\in E(G):\sigma(e)=-1\}$. Let $q_{w}=\max_{v\in V(\widetilde{\Gamma})} q_v>0$.

\begin{claim}\label{claim2}
$|E^-(\widetilde{\Gamma})|=1$.
\end{claim}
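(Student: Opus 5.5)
The plan is to argue by contradiction. Assume $|E^-(\widetilde{\Gamma})|=k\ge2$ and compare $\lambda=\lambda_1(\widetilde{\Gamma})$ with the lower bound $\lambda^2+2\lambda>2m-4$, which holds for every extremal graph by Lemma~\ref{lemma2.7} (for $t=0$ and $t\ge1$ respectively). Note that $k\ge1$ automatically, because $\widetilde{\Gamma}$ is unbalanced. The whole argument therefore reduces to proving an upper bound of the form $\lambda^2+2\lambda\le 2m-4$ whenever $k\ge2$. For this I would combine the nonnegativity of $\mathbf q$ with a Stanley-type estimate.

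\emph{Step 1: pass to the positive part.} Let $G^+$ be the spanning subgraph on $E^+(\widetilde{\Gamma})$, which has $m-k\le m-2$ edges. Since $\mathbf q\ge\mathbf0$, we have entrywise $A(G^+)\mathbf q=\lambda\mathbf q+\mathbf s$, where $s_v=\sum_{vx\in E^-}q_x\ge0$. Hence $\lambda\le\rho(G^+)$, with equality only if $\mathbf q$ vanishes on every endpoint of a negative edge. This is the same mechanism as in Lemma~\ref{lemma2.2}. Lemma~\ref{lemma2.4} applied to $G^+$ then gives $\lambda^2+\lambda\le 2(m-k)\le 2m-4$.

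\emph{Step 2: close the gap.} Step 1 yields $\lambda^2+\lambda$, not $\lambda^2+2\lambda$, so it is too weak by exactly a linear term. This is the step I expect to be the main obstacle. My first attempt would be to redo Stanley's proof directly for $\widetilde{\Gamma}$ at the maximum vertex $w$. Write
\[
\lambda^2 q_w=\sum_{v\sim w}\sigma(wv)\sum_{x\sim v}\sigma(vx)q_x,
\]
bound $q_x\le q_w$, and keep track of the negative terms. Each negative edge has its two endpoints joined by a walk structure that the $C_4^-$-freeness controls: a negative edge $xy$ cannot lie on a $4$-cycle whose other three edges are positive. I would use this to show that the contributions of the $k\ge2$ negative edges save at least an additional $\lambda q_w$ in the row-sum bound, compared with deleting them. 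That would upgrade the estimate to $\lambda^2+2\lambda\le 2m-4$, contradicting Lemma~\ref{lemma2.7}.

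\emph{Fallback if Step 2 resists.} I would switch to a perturbative argument. Fix a negative cycle $C$ of minimum length and a negative edge $e_0\in C$, and reverse every other negative edge $f$, using Lemma~\ref{lemma2.2}(iii). Reversing keeps $|E|=m$ and keeps $e_0$ negative. The difficulty is that it may create a negative $4$-cycle or make the graph balanced. To avoid both, I would instead choose $e_0$ to be a negative edge maximizing $q_xq_y$, delete a different negative edge $f$ (Lemma~\ref{lemma2.2}(ii)), and re-add a positive edge at $w$ to a vertex not yet adjacent to $w$, placed so that no new $4$-cycle contains $e_0$ with an odd number of negative edges. Strictness of the gain in $\lambda$ would follow from the equality case of Lemma~\ref{lemma2.2} together with $q_w>0$. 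The resulting graph still lies in $\mathcal F_m$ and has strictly larger index, contradicting extremality.
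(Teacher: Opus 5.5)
There is a genuine gap. Step 1 is correct, but Step 2 cannot work. It aims at an inequality that is false unless extremality is used, and your sketch never uses extremality.

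Take $(K_d,+)$ and attach at one vertex $u$ two negative triangles $uab$ and $uce$, with positive edges $ua,ub,uc,ue$ and negative edges $ab,ce$. This signed graph lies in $\mathcal F_m$ with $m=\binom d2+6$: every $4$-cycle lies inside $(K_d,+)$. Every switching of it has at least two negative edges, because the two negative triangles are edge-disjoint. Also $\lambda>d-1$, so $\lambda^2+2\lambda>d^2-1\ge d^2-d+8=2m-4$ for $d\ge 9$. Hence no walk-counting at $w$ that uses only $\mathbf q\ge\mathbf 0$ and $C_4^-$-freeness can turn $k\ge2$ into $\lambda^2+2\lambda\le 2m-4$.

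The Stanley route stays too weak even if you knew $k\ge3$. Step 1 then gives only $\lambda^2+\lambda\le 2m-6$, which together with Lemma~\ref{lemma2.7} yields merely $\lambda>2$.

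The fallback does not close the gap either. You cannot exclude the case where every negative cycle passes through both $e_0$ and $f$. In that case, deleting $f$ destroys all negative cycles. The positive edge you add at $w$ would then have to create a new negative cycle, and nothing prevents it from being a negative $4$-cycle. The choice of $e_0$ maximizing $q_xq_y$ plays no role in controlling this.

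What is missing is the use of extremality in two specific places.

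First, suppose some negative edge $f=xy$ avoids some negative cycle $C$. Delete $f$ and add a positive pendant edge from $w$ to a \emph{new} vertex. Then $C$ survives and no new cycle appears, so the graph stays in $\mathcal F_m$. The Rayleigh quotient gains $2q_xq_y$. If $q_xq_y=0$, strictness follows from the eigen-equation at the new vertex, since $q_w>0$. This contradicts extremality, so every negative edge lies on every negative cycle. Hence all negative edges lie on a single negative cycle, their number is odd, and $k\ge2$ forces $k\ge3$.

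Second, for $k\ge3$ do not pass through Lemma~\ref{lemma2.4} at all. Your Step 1 already gives $\lambda\le 2\sum_{E^+}q_uq_v\le\rho_{m-3}$. Compare this directly with an explicit competitor $\Lambda$. Take an unsigned graph $H$ with $m-3$ edges and $\rho(H)=\rho_{m-3}$, and attach a negative triangle (two positive edges, one negative) at a vertex $a$ with positive Perron entry. Then $\Lambda\in\mathcal F_m$ and $\lambda_1(\Lambda)>\rho_{m-3}$, which contradicts extremality.
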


\noindent\emph{Proof.}
We first assert that every negative edge belongs to every negative
cycle. Otherwise, suppose that $uv\in E^-(\widetilde{\Gamma})$ does not
belong to a negative signed cycle $(C,\sigma)$.
We construct a new signed graph $\Gamma'$ from $\widetilde{\Gamma}$ by deleting $uv$ and adding
a new positive pendant edge $ww'$ with a new vertex $w'$.
Clearly, $|E(\Gamma')|=|E(\widetilde{\Gamma})|=m$, the cycle $(C,\sigma)$ is still a negative cycle of $\Gamma'$, and the new pendant edge creates
no cycle. Thus $\Gamma'\in\mathcal F_m$.
Extend $\mathbf q$ to a vector $\widetilde{\mathbf q}$ on
$V(\Gamma')$ by setting $\tilde q_{w'}=0$ and keeping all other
coordinates unchanged. Then
$\widetilde{\mathbf q}^{\,T}\widetilde{\mathbf q}
=\mathbf q^T\mathbf q=1$ and
\[
\lambda_1(\Gamma')
\ge\widetilde{\mathbf q}^{\,T}A(\Gamma')\widetilde{\mathbf q}
=\mathbf q^TA(\widetilde{\Gamma})\mathbf q+2q_uq_v
=\lambda+2q_uq_v\ge\lambda.
\]
If $q_u q_v=0$, then $\lambda_1(\Gamma')=\lambda$ by extremality, and the Rayleigh equality condition forces $A(\Gamma')\widetilde{\mathbf q}=\lambda\widetilde{\mathbf q}$. However,
$
0=\lambda \widetilde q_{w'} = \big(A(\Gamma')\widetilde{\mathbf q}\big)_{w'} = q_w>0,
$
a contradiction. If $q_u q_v>0$, then $\lambda_1(\Gamma')>\lambda$, contrary
to the extremality of $\widetilde{\Gamma}$.
Therefore every negative edge belongs to every negative cycle. Let $(C,\sigma)$ be a negative cycle of $\widetilde{\Gamma}$.
Then $E^-(\widetilde{\Gamma})\subseteq E(C)$ and
$(-1)^{|E^-(\widetilde{\Gamma})|}=\sigma(C)=-1$, so $|E^-(\widetilde{\Gamma})|$ is odd.
Suppose for contradiction that $|E^-(\widetilde{\Gamma})|\ge3$. Since $\mathbf q^T\mathbf q=1$, we have
\[
\begin{aligned}
	\lambda
	&=\mathbf q^TA(\widetilde{\Gamma})\mathbf q
	=2\sum_{uv\in E^+(\widetilde{\Gamma})}q_uq_v
	-2\sum_{uv\in E^-(\widetilde{\Gamma})}q_uq_v\\
	&\le2\sum_{uv\in E^+(\widetilde{\Gamma})}q_uq_v
	\le\rho_{m-|E^-(\widetilde{\Gamma})|}\le\rho_{m-3}.
\end{aligned}
\]
 Now let $H$ be an ordinary graph with $m-3$ edges achieving
$\rho(H)=\rho_{m-3}$, and $\mathbf z\ge\mathbf0$ be a unit
Perron vector of $H$. Choose $a\in V(H)$ such that $z_a>0$. We construct a new signed graph  $\Lambda$ from $(H,+)$ by adding two
new vertices $b,c$ together with the positive edges $ab,ac$ and the negative
edge $bc$. Clearly, $|E(\Lambda)|=m$ and the only cycle involving $b$ or $c$ is the negative triangle formed by $a,b,c$, which is the unique negative cycle of $\Lambda$, so $\Lambda\in\mathcal F_m$.
Extend $\mathbf z$ to a vector $\widetilde{\mathbf z}$ on
$V(\Lambda)$ by setting $\tilde z_b=\tilde z_c=0$ and keeping
all other coordinates unchanged. Then
$\widetilde{\mathbf z}^{\,T}\widetilde{\mathbf z}
=\mathbf z^T\mathbf z=1$ and
$
\lambda_1(\Lambda)
\ge\widetilde{\mathbf z}^{\,T}A(\Lambda)\widetilde{\mathbf z}
=\mathbf z^TA(H)\mathbf z=\rho_{m-3}.
$
If $\lambda_1(\Lambda)=\rho_{m-3}$, then
$\widetilde{\mathbf z}$ is an eigenvector of $A(\Lambda)$
corresponding to $\rho_{m-3}$. However,
$
0=\rho_{m-3}\tilde z_b
=(A(\Lambda)\widetilde{\mathbf z})_b
=\tilde z_a-\tilde z_c=z_a>0,
$
a contradiction. Hence
$\lambda_1(\Lambda)>\rho_{m-3}\ge\lambda_1(\widetilde{\Gamma})$,
contrary to the extremality of $\widetilde{\Gamma}$. Thus $|E^-(\widetilde{\Gamma})|=1$.

By Claims~\ref{claim1} and \ref{claim2}, let $u'u''$ be the unique negative edge of $\widetilde{\Gamma}$ with $q_{u'}\ge q_{u''}$. Write $N(v)=N_{\widetilde{\Gamma}}(v)$ and define
$	Z=N(u')\cap N(u''),
	A=N(u')\setminus(Z\cup\{u''\}), B=N(u'')\setminus(Z\cup\{u'\}),
	D=V(\widetilde{\Gamma})\setminus(A\cup B\cup Z\cup\{u',u''\}).$
Since $\widetilde{\Gamma}$ is $C_4^-$-free, we have
$E(A,B)=E(A,Z)=E(B,Z)=E(Z)=\varnothing$.
If $q_{u'}=0$, then $q_{u''}=0$. Since $u'u''$ is the only negative edge, zero coordinates propagate by the eigen-equations. By Claim~\ref{claim1} and $\lambda>0$, we obtain $\mathbf q=\mathbf0$, a contradiction.
Hence $q_{u'}>0$. For every $v\in A\cup Z$, all incident edges are positive, so
$
\lambda q_v=\sum_{z\in N(v)}q_z\ge q_{u'}>0
$ since $u'\in N(v)$, which implies $q_v>0$ for every $v\in A\cup Z$.

For a signed graph $\Gamma$ and a vertex subset $S\subseteq V(\Gamma)$, $\Gamma-S$ is obtained by deleting all vertices in $S$ together with all edges incident to vertices in $S$, and the signs on the remaining edges are unchanged. $\Gamma-e$ is the signed graph on vertex set $V(\Gamma)$ and edge set $E(\Gamma)\setminus\{e\}$, while the signs of all remaining edges are unchanged. 
\begin{claim}\label{claim3}
	$Z\ne\varnothing$.
\end{claim}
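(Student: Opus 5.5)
The plan is to argue by contradiction: assume $Z=\varnothing$, and build a signed graph $\Gamma'\in\mathcal F_m$ with $\lambda_1(\Gamma')>\lambda$. This contradicts the extremality of $\widetilde{\Gamma}$.

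\emph{Structure under the assumption.} If $Z=\varnothing$, no negative triangle passes through $u'u''$. Every negative cycle contains $u'u''$ (this was shown in the proof of Claim~\ref{claim2}), and $\widetilde{\Gamma}$ is $C_4^-$-free. Hence every negative cycle has length at least $5$. Fix a shortest negative cycle $C=u'\,a\,x_1\cdots x_s\,b\,u''\,u'$ with $a\in A$ and $b\in B$. In particular $A\ne\varnothing$ and $B\ne\varnothing$. We already know $q_a>0$. The same argument applied at $u''$ gives $q_b\ge0$, with $q_b>0$ whenever $q_{u''}>0$.

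\emph{The edge-rotation.} I would compare two candidate moves.
\begin{itemize}
\item[(a)] Delete the positive edge $u''b$ and add a positive edge $u''a$.
\item[(b)] Delete the positive edge $u'a$ and add a positive edge $u'b$.
\end{itemize}
In either case the size stays $m$, and the new triangle ($u'u''a$ or $u'u''b$) has exactly one negative edge, so $\Gamma'$ is unbalanced. Testing $\Gamma'$ with the unchanged vector $\mathbf q$ gives
\[
\mathbf q^TA(\Gamma')\mathbf q-\lambda=2q_{u''}(q_a-q_b)\quad\text{for (a)},\qquad \mathbf q^TA(\Gamma')\mathbf q-\lambda=2q_{u'}(q_b-q_a)\quad\text{for (b)}.
\]
Since $q_{u'}>0$ and $q_{u''}\ge0$, at least one of these gains is nonnegative: choose (a) if $q_a\ge q_b$ and (b) otherwise. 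Thus $\lambda_1(\Gamma')\ge\lambda$.

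\emph{Strictness.} Equality would force $\mathbf q$ to be an eigenvector of $A(\Gamma')$ for $\lambda$. Compare the eigen-equation at $u''$ (respectively $u'$) in $\widetilde{\Gamma}$ and in $\Gamma'$: the neighbourhood changed by removing $b$ and adding $a$. So equality would give $q_a=q_b$, and also equality at the vertex that gained the new neighbour. In case (a) that vertex is $a$, which now has the extra term $q_{u''}$; in case (b) it is $b$, which gains $q_{u'}>0$. In case (b) this forces $q_{u'}=0$, which is impossible. In case (a) it forces $q_{u''}=0$, and then case (b) is also available with gain $2q_{u'}(q_b-q_a)=0$ and the same contradiction. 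So in all cases $\lambda_1(\Gamma')>\lambda$.

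\emph{Main obstacle: keeping $\Gamma'$ $C_4^-$-free.} Take move (a). The new edge $u''a$ creates 4-cycles. The only negative ones must use $u'u''$, so they are of the form $u''\,a\,y\,u'$ with $y\in N(a)\cap N(u')\setminus\{u''\}$, i.e.\ $y\in A$ adjacent to $a$. Move (b) is symmetric, with $y\in B$ adjacent to $b$.

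First, I would pick $a$ and $b$ so that such $y$ do not exist, using the minimality of $C$. A chord $ay$ with $y\in A$ need not shorten $C$, however. Therefore the fallback is to modify the construction: let $Y$ be the set of such $y$. Instead of a single rotation, delete all edges $u'y$ with $y\in Y$, together with $u''b$. Then add the same number of positive edges: $u''a$, plus pendant edges at the vertex $w$ of maximum $q$, as in Claim~\ref{claim2}. The deleted edges contribute at most $q_{u'}\sum_{y\in Y}q_y$. The added edge $u''a$ contributes $q_{u''}q_a$, and the added pendant edges contribute strictness.

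Controlling this trade-off is the delicate step. If it fails, I would instead bound $\lambda$ directly. Writing $\lambda q_{u'}=\sum_{A}q_v-q_{u''}$, I would aim for a Stanley-type estimate $\lambda^2+2\lambda\le 2m-4$ when no negative triangle is present. This contradicts Lemma~\ref{lemma2.7}, because $\Gamma_m^*$ or $\Gamma_m^{**}$ lies in $\mathcal F_m$ and so $\lambda$ must exceed its index.
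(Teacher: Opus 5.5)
Your proposal has a genuine gap: the obstacle you flag is never resolved, and it is the core of the claim. Moves (a) and (b) are admissible only if $a$ has no neighbour in $A$, respectively $b$ has no neighbour in $B$, and nothing forces this. With $Z=\varnothing$, $C_4^-$-freeness only gives $E(A,B)=\varnothing$. So $A$ and $B$ may both be cliques of order at least $2$ (joined through $D$ by a path of length $\ge 2$). Then every choice of $a\in A$, $b\in B$ in either move creates a negative $4$-cycle $u'yau''$ or $u''ybu'$, and minimality of $C$ does not help. Your strictness step, which falls back on move (b) when $q_{u''}=0$ and $q_a=q_b$, inherits the same problem.

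The modified construction changes the Rayleigh quotient by $2q_{u''}(q_a-q_b)-2q_{u'}\sum_{y\in Y}q_y$, since the pendant edges contribute nothing. This can be negative, so it gives nothing. The last resort is false as stated. For $d\ge5$, take $(K_d-ab,+)$ and add positive edges $u'a$, $u''b$ and the negative edge $u'u''$. This graph lies in $\mathcal F_m$ with $m=\binom d2+2$: every negative cycle has length at least $5$, and there is no negative triangle. Yet $\rho=\rho(K_d-ab)$ satisfies $\rho^2=(d-3)\rho+2(d-2)$ and $\rho>d-2+\frac{2}{d-1}$. Hence its index $\lambda\ge\rho$ obeys $\lambda^2+2\lambda\ge(d-1)\rho+2d-4>d^2-d=2m-4$. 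Such configurations must be excluded by local moves, not by a global bound.

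The missing idea is the paper's case split on the weights. Suppose first that some $w_0\in A$ has $q_{w_0}\ge q_{u''}$ (or symmetrically some vertex of $B$ has weight at least $q_{u'}$). The paper moves every edge $u''v$ with $v\in B\setminus\{v_0\}$ to $w_0v$, where $v_0$ is the last internal vertex of $C-u'u''$.
\begin{itemize}
\item The gain is $2(q_{w_0}-q_{u''})\sum_v q_v\ge 0$, with strictness from the eigen-equations.
\item No negative $4$-cycle appears, because $u''$ keeps the single positive neighbour $v_0$, which has no neighbour in $A$.
\end{itemize}
This forces $B=\{v_0\}$, hence $q_{v_0}=q_{u'}+\lambda q_{u''}\ge q_{u'}$. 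The analogous transfer $u'u\to v_0u$ then forces $A=\{u_0\}$. Only now is your rotation $u''v_0\to u''u_0$ performed, and it is automatically $C_4^-$-safe.

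In the complementary case, $\max_A q<q_{u''}$ and $\max_B q<q_{u'}$, and no local move is available. Put $\theta'=q_{u'}/q_{u''}$. The eigen-equations at $u'$ and $u''$ give $|A|>1+\lambda\theta'$ and $|B|>1+\lambda/\theta'$, so $|A|+|B|>2\lambda+2$. Then $F=\widetilde{\Gamma}-\{u',u''\}$ has only $m-1-|A|-|B|$ edges. Lemma~\ref{lemma2.4}, together with Cauchy--Schwarz and the bound $\lambda^2+2\lambda>2m-4$ from Lemma~\ref{lemma2.7}, yields the contradiction. So a global spectral estimate of the kind you hope for is available only under this weight hypothesis, which is precisely what makes $A$ and $B$ large.
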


\noindent\emph{Proof.}
Assume for contradiction $Z=\varnothing$.
Let $(C,\sigma)$ be a negative cycle of
$\widetilde{\Gamma}$.
Let $u_0\in A$ and $v_0\in B$ be the first and last internal vertices of the positive path $C-u'u''$. We distinguish the following two cases.

\begin{case}
	$\max_{v\in A}q_v<q_{u''}$ and $\max_{v\in B}q_v<q_{u'}$.	\end{case}

Let $a'=|A|$ and $b'=|B|$.
Since $q_v>0$ for $v\in A$, we have $q_{u''}>0$.
Set $\theta'=q_{u'}/q_{u''}\ge1$ because $q_{u'}\ge q_{u''}$.
Since 
$
\lambda q_{u'}=-q_{u''}+\sum_{u\in A}q_u$ and $
\lambda q_{u''}=-q_{u'}+\sum_{v\in B}q_v,
$
\[
\sum_{u\in A}q_u=(1+\lambda\theta')q_{u''}<a'q_{u''},\
\sum_{v\in B}q_v=(\theta'+\lambda)q_{u''}<b'\theta' q_{u''}.
\]
Thus $a'>1+\lambda\theta'$ and $b'>1+\lambda/\theta'$.
For every $u\in A$,
$\lambda q_u=q_{u'}+\sum_{z\in N(u)\setminus\{u'\}}q_z\ge q_{u'}$. Since
 $q_u<q_{u''}$, $1\le\theta'<\lambda$.
Let $F=\widetilde{\Gamma}-\{u',u''\}$.
Then $|E(F)|=m-1-a'-b'$.
Write $\mathbf q_F=\mathbf q|_{V(F)}$ for the restriction
of $\mathbf q$ to $V(F)$. Let
$\mathbf1_U$ be the vector on $V(F)$ whose
coordinate at $v$ is $1$ if $v\in U$ and $0$ otherwise.
By the Cauchy--Schwarz inequality,
\[ \sum_{u\in A}q_u^2
\ge\frac{\left(\sum_{u\in A}q_u\right)^2}{a'}
=\frac{(1+\lambda\theta')^2}{a'}q_{u''}^2, \
\sum_{v\in B}q_v^2
\ge\frac{\left(\sum_{v\in B}q_v\right)^2}{b'}
=\frac{(\theta'+\lambda)^2}{b'}q_{u''}^2.\]
Since $A\cap B=\varnothing$ and $\mathbf q^T\mathbf q=1$,
\[
\|\mathbf q_F\|^2
=1-q_{u'}^2-q_{u''}^2
\ge\sum_{u\in A}q_u^2+\sum_{v\in B}q_v^2\\
\ge q_{u''}^2\left[
\frac{(\lambda\theta'+1)^2}{a'}
+\frac{(\theta'+\lambda)^2}{b'}\right].\tag{5}\]
By the eigenvalue equation
$\lambda\mathbf q_F=A(F)\mathbf q_F+q_{u'}\mathbf1_A+q_{u''}\mathbf1_B$,
we have
\[
\begin{aligned}
	\lambda\|\mathbf q_F\|^2
	&=\mathbf q_F^TA(F)\mathbf q_F
	+q_{u'}\sum_{u\in A}q_u+q_{u''}\sum_{v\in B}q_v\\
	&\le\rho_{m-1-a'-b'}\|\mathbf q_F\|^2
	+q_{u''}^2\bigl[\lambda(\theta'^2+1)+2\theta'\bigr].
\end{aligned}\tag{6}
\]
Since $a'>1+\lambda\theta'$ and $b'>1+\lambda/\theta'$, we have
$a'+b'>2+\lambda(\theta'+1/\theta')\ge2+2\lambda$.
By Lemmas~\ref{lemma2.4} and \ref{lemma2.7},
\[
\begin{aligned}
	(\lambda-\rho_{m-1-a'-b'})(\lambda+\rho_{m-1-a'-b'}+1)
	&=\lambda^2+\lambda-\rho_{m-1-a'-b'}^2-\rho_{m-1-a'-b'}\\
	&\ge\lambda^2+\lambda-2m+2(a'+b')+2\\
	&>2(a'+b')-\lambda-2>0.
\end{aligned}\tag{7}
\]
Since $\lambda+\rho_{m-1-a'-b'}+1>0$ and $\rho_{m-1-a'-b'}\ge0$, $0<\lambda-\rho_{m-1-a'-b'}\le\lambda$.
By inequalities (5) and (6),
we obtain
\[
(\lambda-\rho_{m-1-a'-b'})
\left[\frac{(\lambda\theta'+1)^2}{a'}
+\frac{(\theta'+\lambda)^2}{b'}\right]
\le\lambda(\theta'^2+1)+2\theta'.\tag{8}
\]
Define
$
g(\tau)=\frac{\lambda(\tau^2+1)+2\tau}{(\tau+1)^2}$, then $
g'(\tau)=\frac{2(\lambda-1)(\tau-1)}{(\tau+1)^3}\ge0
\ (\tau\ge1)$. We now show that inequality (8) cannot hold. Otherwise, by the Cauchy--Schwarz inequality and inequality (8),
\[
\lambda-\rho_{m-1-a'-b'}
\le\frac{\lambda(\theta'^2+1)+2\theta'}
{(\lambda\theta'+1)^2/a'+(\theta'+\lambda)^2/b'}
\le\frac{a'+b'}{(\lambda+1)^2}g(\theta').
\]
Since $(a'-1)/\lambda>\theta'\ge1$ and $g$ is strictly increasing on
$[1,\infty)$, we have
$\lambda-\rho_{m-1-a'-b'}<\eta$, where
$
\eta=\frac{a'+b'}{(\lambda+1)^2}g\!\left(\frac{a'-1}{\lambda}\right)
=\frac{(a'+b')\lambda(\lambda^2+a'^2-1)}
{(\lambda+1)^2(\lambda+a'-1)^2}.
$
For $\tau\ge1$, we have $g(\tau)\le\lambda$.
Also $a'+b'\le m-1<(\lambda^2+2\lambda+2)/2$, so
$
0<\eta\le\frac{(a'+b')\lambda}{(\lambda+1)^2}
<\frac\lambda2\left(1+\frac1{(\lambda+1)^2}\right)<\lambda.
$
Hence
\[
0<\lambda-\rho_{m-1-a'-b'}<\eta<\lambda.
\]
Define $f(z)=z(2\lambda+1-z)$ for $z\in[0,\lambda]$.
Since $f'(z)=2\lambda+1-2z\ge1>0$, the function $f$ is strictly
increasing on $[0,\lambda]$. Note
$
\frac{\eta}{a'+b'}=\frac{\lambda(\lambda^2+a'^2-1)}
{(\lambda+1)^2(\lambda+a'-1)^2}
\le\frac{\lambda}{(\lambda+1)^2},
$
and define $g_1(z)=2z-\lambda-2-\frac{\eta}{a'+b'}z
(2\lambda+1-\frac{\eta}{a'+b'}z)$.
For $z\ge0$,
\[
g_1'(z)=2-\frac{\eta}{a'+b'}(2\lambda+1)
+2\left(\frac{\eta}{a'+b'}\right)^2z
\ge2-\frac{\lambda(2\lambda+1)}{(\lambda+1)^2}
=\frac{3\lambda+2}{(\lambda+1)^2}>0.
\]
Since $(a'-1)(b'-1)>\lambda^2$, we have
$a'+b'>a'+1+\lambda^2/(a'-1)$.
Direct expansion yields
\[
g_1\!\left(a'+1+\frac{\lambda^2}{a'-1}\right)
=\frac{P(\lambda,a'-1)}
{(a'-1)^2(\lambda+1)^4(\lambda+a'-1)^4},
\]
where
\begin{align*}
	P(\lambda,a'-1)&=
	\lambda^{10}+3\lambda^9(a'-1)^2+3\lambda^9(a'-1)
	+4\lambda^8(a'-1)^3+16\lambda^8(a'-1)^2+16\lambda^8(a'-1)\\
	&+2\lambda^7(a'-1)^4+17\lambda^7(a'-1)^3+22\lambda^7(a'-1)^2+7\lambda^7(a'-1)\\
	&+4\lambda^6(a'-1)^5+22\lambda^6(a'-1)^4+56\lambda^6(a'-1)^3+46\lambda^6(a'-1)^2+2\lambda^6(a'-1)\\
	&+3\lambda^5(a'-1)^6+17\lambda^5(a'-1)^5+36\lambda^5(a'-1)^4+33\lambda^5(a'-1)^3+11\lambda^5(a'-1)^2\\
	&+16\lambda^4(a'-1)^6+56\lambda^4(a'-1)^5+84\lambda^4(a'-1)^4+50\lambda^4(a'-1)^3+2\lambda^4(a'-1)^2\\
	&+3\lambda^3(a'-1)^7+22\lambda^3(a'-1)^6+33\lambda^3(a'-1)^5+18\lambda^3(a'-1)^4+4\lambda^3(a'-1)^3\\
	&+\lambda^2(a'-1)^8+16\lambda^2(a'-1)^7+46\lambda^2(a'-1)^6+50\lambda^2(a'-1)^5+20\lambda^2(a'-1)^4\\
	&+7\lambda (a'-1)^7+11\lambda (a'-1)^6+4\lambda (a'-1)^5+2(a'-1)^7+2(a'-1)^6.
\end{align*}
Every term on the right-hand side is positive since $a'>1$. Thus
$
g_1(a'+b')>g_1\!\left(a'+1+\frac{\lambda^2}{a'-1}\right)>0.
$
By the definitions of $g_1$ and $f$, we have
$	g_1(a'+b')
=2(a'+b')-\lambda-2-\eta(2\lambda+1-\eta)=2(a'+b')-\lambda-2-f(\eta)>0.
$
Combining this with inequality (7),
we obtain
\[
\begin{aligned}
	f(\eta)
	&<2(a'+b')-\lambda-2\\
	&<(\lambda-\rho_{m-1-a'-b'})(\lambda+\rho_{m-1-a'-b'}+1)\\
	&=f(\lambda-\rho_{m-1-a'-b'}).
\end{aligned}
\]
Since $f$ is strictly increasing on $[0,\lambda]$ and
$0<\lambda-\rho_{m-1-a'-b'}<\eta<\lambda$, it follows that
$\eta<\lambda-\rho_{m-1-a'-b'}$, contrary to
$\lambda-\rho_{m-1-a'-b'}<\eta$.
Therefore inequality (8) cannot hold, and Case 1 is impossible.

\medskip
	
\begin{case}
$\max_{v\in A}q_v\ge q_{u''}$ or $\max_{v\in B}q_v\ge q_{u'}$	\end{case}
Without loss of generality, assume $q_{w_0}=\max_{v\in A}q_v\ge q_{u''}$.
Suppose $B\setminus\{v_0\}\ne\varnothing$.
We construct a new signed graph $\Gamma'$ from $\widetilde{\Gamma}$ by deleting the positive edge $u''v$ and adding the positive edge $w_0v$ for each $v\in B\setminus\{v_0\}$. Clearly, $|E(\Gamma')|=|E(\widetilde{\Gamma})|=m$
, the cycle $(C,\sigma)$ is still a negative cycle of $\Gamma'$, and no new negative $4$-cycle
is created since $N_{\Gamma'}(u'')\setminus\{u'\}=\{v_0\}$ and $N_{\Gamma'}(v_0)\cap A=\varnothing$. Thus $\Gamma'\in\mathcal F_m$, and
\[
\lambda_1(\Gamma')\ge\mathbf q^TA(\Gamma')\mathbf q
=\lambda+2(q_{w_0}-q_{u''})\sum_{v\in B\setminus\{v_0\}}q_v
\ge\lambda.
\]
If $\lambda_1(\Gamma')=\lambda$, then ${\mathbf q}$ is an eigenvector of $A(\Gamma')$ corresponding to $\lambda_1(\Gamma')$.
For every $v\in B\setminus\{v_0\}$,
$
0=\bigl((A(\Gamma')-A(\widetilde{\Gamma}))\mathbf q\bigr)_v
=q_{w_0}-q_{u''}.
$
Thus $q_{u''}=q_{w_0}>0$, and
$\lambda q_v=\sum_{z\in N(v)}q_z\ge q_{u''}>0$, i.e., $q_v>0$.
Consequently,
$
0=\bigl((A(\Gamma')-A(\widetilde{\Gamma}))\mathbf q\bigr)_{w_0}
=\sum_{v\in B\setminus\{v_0\}}q_v>0,
$
a contradiction. Hence  $B\setminus\{v_0\}=\varnothing$, that is, $B=\{v_0\}$. Note that
$q_{v_0}=q_{u'}+\lambda q_{u''}\ge q_{u'}$.
Next suppose that  $A\setminus\{u_0\}\ne\varnothing$.
We construct a new signed graph $\Gamma''$ from $\widetilde{\Gamma}$ by deleting the positive edge $u'u$ and adding the positive edge $v_0u$ for each $u\in A\setminus\{u_0\}$. Clearly,
 $|E(\Gamma'')|=|E(\widetilde{\Gamma})|=m$
, the cycle $(C,\sigma)$ is still a negative cycle of $\Gamma''$ and no new negative $4$-cycle
is created since $N_{\Gamma''}(u')\setminus\{u''\}=\{u_0\}$, $N_{\Gamma''}(u'')\setminus\{u'\}=\{v_0\}$ and $u_0v_0\notin E(\Gamma'')$. Thus $\Gamma''\in\mathcal F_m$ and
\[
\lambda_1(\Gamma'')\ge\mathbf q^TA(\Gamma'')\mathbf q
=\lambda+2(q_{v_0}-q_{u'})\sum_{u\in A\setminus\{u_0\}}q_u
\ge\lambda.
\]
If $\lambda_1(\Gamma'')=\lambda$, then $\mathbf q$ is an eigenvector of $A(\Gamma'')$
corresponding to $\lambda_1(\Gamma'')$.
For every $u\in A\setminus\{u_0\}$,
$
0=\bigl((A(\Gamma'')-A(\widetilde{\Gamma}))\mathbf q\bigr)_u
=q_{v_0}-q_{u'}.
$
Thus $q_{v_0}=q_{u'}>0$
and
$
\lambda q_u=\sum_{z\in N_{\widetilde{\Gamma}}(u)}q_z
\ge q_{u'}>0
$
, i.e., $q_u>0$.
Consequently,
$
0=\bigl((A(\Gamma'')-A(\widetilde{\Gamma}))\mathbf q\bigr)_{v_0}
=\sum_{u\in A\setminus\{u_0\}}q_u>0,
$
a contradiction. Therefore $A\setminus\{u_0\}=\varnothing$, that is,
$A=\{u_0\}$. From the above discussion, we have $V(\widetilde{\Gamma})=\{u',u'',u_0,v_0\}\cup D$, where $u'u''$ is the unique negative edge, $u'u_0$ and $u''v_0$ are two positive edges, and $u_0v_0\notin E(\widetilde{\Gamma})$. Restore $q_{u'}\ge q_{u''}$ by relabeling vertices if needed.
We construct a new signed graph $\Lambda$ from $\widetilde{\Gamma}$ by deleting
the positive edge $u''v_0$ and adding the positive edge $u''u_0$. Clearly, $|E(\Lambda)|=|E(\widetilde{\Gamma})|=m$ and the negative triangle formed by vertices $u',u_0,u''$ is a negative cycle in $\Lambda$. So $\Lambda\in\mathcal F_m$. Note that
$
q_{u_0}=\lambda q_{u'}+q_{u''}$ and $
q_{v_0}=\lambda q_{u''}+q_{u'},$ then $
q_{u_0}-q_{v_0}=(\lambda-1)(q_{u'}-q_{u''})\ge0.
$
We first show that $q_{u_0}\ne q_{v_0}$. Suppose instead $q_{u_0}=q_{v_0}$.
Since $\lambda>1$, 
$q_{u'}=q_{u''}>0$, and thus
\[
\lambda_1(\Lambda)
\ge\mathbf q^TA(\Lambda)\mathbf q
=\lambda+2q_{u''}(q_{u_0}-q_{v_0})=\lambda.
\]
By extremality, $\lambda_1(\Lambda)=\lambda$, so
$A(\Lambda)\mathbf q=\lambda\mathbf q$. However,
$
0=\bigl((A(\Lambda)-A(\widetilde{\Gamma}))\mathbf q\bigr)_{v_0}
=-q_{u''}<0,
$
a contradiction. Thus $q_{u_0}\ne q_{v_0}$, i.e., $q_{u_0}>q_{v_0}$. 
If $q_{u''}=0$, then $A(\Lambda)\mathbf q=\lambda\mathbf q$, whereas $0=((A(\Lambda)-A(\widetilde{\Gamma}))\mathbf q)_{u''}=q_{u_0}-q_{v_0}>0$, a contradiction. Thus $q_{u''}>0$, and
\[
\begin{aligned}
	\lambda_1(\Lambda)-\lambda_1( \widetilde{\Gamma})
	&\ge\mathbf q^T\bigl(A(\Lambda)-A( \widetilde{\Gamma})\bigr)\mathbf q\\
	&=2q_{u''}(q_{u_0}-q_{v_0})>0,
\end{aligned}
\]
  contrary to the extremality of $\widetilde{\Gamma}$.
This means that Case 2 is impossible.

Both cases lead to a contradiction, so $Z\neq\varnothing$.

\begin{claim}\label{claim4}
	$B=\varnothing$.
\end{claim}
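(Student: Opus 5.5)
The plan is to argue by contradiction with an edge-rotation argument, in the same spirit as Case 2 of Claim~\ref{claim3}. Suppose $B\neq\varnothing$. Since $Z\neq\varnothing$ by Claim~\ref{claim3}, fix $z_0\in Z$. Construct $\Gamma'$ from $\widetilde{\Gamma}$ as follows: for every $v\in B$, delete the positive edge $u''v$ and add the positive edge $u'v$. This is legitimate because $v\notin N(u')$, as $B\cap(Z\cup A)=\varnothing$. Clearly $|E(\Gamma')|=m$.

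First I will check that $\Gamma'\in\mathcal F_m$.
\begin{itemize}
\item $\Gamma'$ is unbalanced, since the triangle $u'u''z_0$ survives and is negative.
\item The edge $u'u''$ is still the unique negative edge, so every negative cycle of $\Gamma'$ passes through $u'u''$.
\item A negative $4$-cycle would therefore have the form $u'u''xy$ with $x\in N_{\Gamma'}(u'')\setminus\{u'\}=Z$, $y\in N_{\Gamma'}(u')\setminus\{u''\}=Z\cup A\cup B$, and $xy\in E$.
\item Since $\widetilde{\Gamma}$ is $C_4^-$-free, we have $E(Z)=E(A,Z)=E(B,Z)=\varnothing$. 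The moved edges do not alter adjacencies among $A\cup B\cup Z$, so no such $xy$ exists.
\end{itemize}

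Next, the Rayleigh quotient with the unit vector $\mathbf q$ gives
\[
\lambda_1(\Gamma')\ge \mathbf q^TA(\Gamma')\mathbf q=\lambda+2(q_{u'}-q_{u''})\sum_{v\in B}q_v\ge\lambda,
\]
using $q_{u'}\ge q_{u''}$ and $\mathbf q\ge\mathbf0$. By extremality equality must hold, so $\lambda_1(\Gamma')=\lambda$ and $\mathbf q$ is an eigenvector of $A(\Gamma')$ for $\lambda$. Hence $(A(\Gamma')-A(\widetilde\Gamma))\mathbf q=\mathbf 0$.

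Finally, I will read off this vector equation coordinatewise.
\begin{itemize}
\item At $u''$ it gives $-\sum_{v\in B}q_v=0$, so $q_v=0$ for all $v\in B$.
\item At any $v\in B$ it gives $q_{u'}-q_{u''}=0$, so $q_{u''}=q_{u'}>0$, recalling that $q_{u'}>0$ was shown earlier.
\item All edges at $v\in B$ are positive, since only $u'u''$ is negative. The eigen-equation in $\widetilde\Gamma$ then gives $0=\lambda q_v=\sum_{x\in N(v)}q_x\ge q_{u''}>0$, a contradiction.
\end{itemize}
Hence $B=\varnothing$.

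The only delicate step is the $C_4^-$-freeness check. It must use that every negative cycle contains the unique negative edge (Claim~\ref{claim2}) and that all of $B$ is moved at once, so that $N_{\Gamma'}(u'')\setminus\{u'\}=Z$ is independent of the enlarged $N_{\Gamma'}(u')$.
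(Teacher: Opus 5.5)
Your proof is correct and is essentially the paper's argument: the same rotation of the edges $u''v$ to $u'v$ for all $v\in B$, the same Rayleigh bound $\lambda+2(q_{u'}-q_{u''})\sum_{v\in B}q_v$, and the same use of $(A(\Gamma')-A(\widetilde{\Gamma}))\mathbf q=\mathbf 0$ in the equality case; you read the row of $u''$ to get $q_v=0$, whereas the paper first shows $q_v>0$ and then reads the row of $u'$, which is only a reordering. Your check that no negative $4$-cycle arises, via $N_{\Gamma'}(u'')\setminus\{u'\}=Z$ and $E(Z)=E(A,Z)=E(B,Z)=\varnothing$, spells out what the paper only asserts.
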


\noindent\emph{Proof.}
Suppose for contradiction that $B\ne\varnothing$. We construct a new signed graph $\Gamma'$ from $\widetilde{\Gamma}$ by deleting each positive edge $u''v$ and adding new positive edges $u'v$ for all $v\in B$. Clearly,
we have $|E(\Gamma')|=|E(\widetilde{\Gamma})|=m$,
a negative triangle through a vertex of $Z$ remains in $\Gamma'$ and no negative $4$-cycle
is created. Thus $\Gamma'\in\mathcal F_m$. And
$
\lambda_1(\Gamma')
\ge\mathbf q^TA(\Gamma')\mathbf q
=\lambda+2(q_{u'}-q_{u''})\sum_{v\in B}q_v
\ge\lambda.
$
If $\lambda_1(\Gamma')=\lambda$, then $\mathbf q$ is an eigenvector of
$A(\Gamma')$ corresponding to $\lambda_1(\Gamma')$ and
$
0=\bigl((A(\Gamma')-A(\widetilde{\Gamma}))\mathbf q\bigr)_v
=q_{u'}-q_{u''}.
$
Thus $q_{u''}=q_{u'}>0$.
Since 
$
\lambda q_v=\sum_{z\in N_{\widetilde{\Gamma}}(v)}q_z
\ge q_{u''}>0,
$
 $q_v>0$ for every $v\in B$. However,
$
0=\bigl((A(\Gamma')-A(\widetilde{\Gamma}))\mathbf q\bigr)_{u'}
=\sum_{v\in B}q_v>0,
$
a contradiction. Hence
$\lambda_1(\Gamma')>\lambda=\lambda_1(\widetilde{\Gamma})$,
which contradicts the extremality of $\widetilde{\Gamma}$.
Therefore $B=\varnothing$.

Let $e_S = |E(S)|$ stand for the number of edges within the vertex subset $S$,
and $e_{ST}=|E(S,T)|$ the number of edges between disjoint vertex subsets $S$ and $T$.
\begin{claim}\label{claim5}
$c=|Z|=1$.
\end{claim}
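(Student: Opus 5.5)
The plan is to argue by contradiction. Assume $c=|Z|\ge2$ and show that either $\lambda$ is too small to be extremal, or some edge-relocation that stays inside $\mathcal F_m$ strictly increases the index.

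\textbf{Setup and lower bound.} By Claims~\ref{claim1}--\ref{claim4}, $V(\widetilde\Gamma)=\{u',u''\}\cup Z\cup A\cup D$ with $N(u'')=Z\cup\{u'\}$ and $N(u')=Z\cup A\cup\{u''\}$. The set $Z$ is independent and $E(A,Z)=\varnothing$. Every edge other than $u'u''$ is positive, and $q_v>0$ on $A\cup Z\cup\{u'\}$. By extremality and Lemma~\ref{lemma2.7}, $\lambda\ge\lambda_1(\Gamma_m^*)$ or $\lambda\ge\lambda_1(\Gamma_m^{**})$, so in both cases
\[
\lambda^2+2\lambda>2m-4 .
\]

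\textbf{Upper bound via two-step walks.} Write the eigen-equation twice at a vertex $w$, taking for $w$ either $u'$ or a vertex of maximum $q$-entry. This gives
\[
\lambda^2 q_w=\sum_{v\in N(w)}\ \sum_{x\in N(v)}\sigma(wv)\sigma(vx)q_x .
\]
I will bound each $q_x$ by $q_w$ and account for every edge of $\widetilde\Gamma$ at most twice. The structure forces a deficit: the negative walks $u'\to u''\to z$ and $u'\to z\to u''$ (through $u''$) cancel contributions, and the $\binom c2$ pairs inside $Z$ together with the $c|A|$ pairs between $Z$ and $A$ are non-edges inside $N(u')$. I will organize this bookkeeping with $e_Z=0$, $e_{AZ}=0$, $e_A$, $e_{ZD}$, $e_{AD}$ and $e_D$, and aim for an inequality of the form
\[
\lambda^2+2\lambda\le 2m-4-\Phi(c),
\]
where $\Phi(c)\ge0$ as soon as $c\ge2$. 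Together with the lower bound this gives the contradiction. The key gain is that each $z\in Z$ contributes the pair of walks $u'\!-\!z\!-\!u''$ and $u'\!-\!u''\!-\!z$ with opposite signs. In the Rayleigh-type count these behave like "wasted" edges $u''z$, and when $c\ge2$ there are at least two of them rather than one, as in $\Gamma_m^{*}$.

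\textbf{Fallback: an edge-relocation argument.} If the walk count turns out too lossy when $w$ is not $u'$, I will instead fix $z_0\in Z$ with $q_{z_0}=\max_Z q$. For each $z\in Z\setminus\{z_0\}$, delete the edge $u''z$ (so $z$ moves into $A$ and is no longer adjacent to $u''$). I then re-insert the same number of positive edges from $u'$, or from the maximum vertex $w$, to vertices of $D$ not yet adjacent to it. These additions must not create a path of length two from $N(u')\setminus\{u''\}$ to $u''$ other than through $z_0$. This keeps the triangle $u'u''z_0$ negative and creates no negative $4$-cycle, since every negative $4$-cycle must use $u'u''$ and an edge between $N(u'')\setminus\{u'\}=\{z_0\}$ and $N(u')\setminus\{u''\}$. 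The change in the Rayleigh quotient is $2\sum_z q_{u''}(q_{\text{new}}-q_z)$. This is compared to $0$ using
\[
\lambda q_{u''}=\sum_{Z}q_z-q_{u'},\qquad \lambda q_{u'}=\sum_Z q_z+\sum_A q_a-q_{u''}.
\]
Strictness follows exactly as in Claims~\ref{claim3}--\ref{claim4}, by checking the eigen-equation at a modified vertex.

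\textbf{Main obstacle.} The hard step is choosing the relocation targets, or equivalently controlling the error terms in the walk count, when $D$ offers no non-neighbour with a large entry. In that case I expect to fall back on the walk-count inequality. The small cases ($m\le 6$, hence $d\le3$) would be checked directly.
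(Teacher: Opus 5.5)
\paragraph{The main step cannot work.} Your walk count uses only the eigen-equations, the bound $q_x\le q_w$ and edge bookkeeping. So whatever it proves must hold for every signed graph that has the structure of Claims~\ref{claim1}--\ref{claim4} and a nonnegative eigenvector; extremality enters only through the lower bound of Lemma~\ref{lemma2.7}. Your target inequality fails for such graphs. Take $A=\varnothing$ and $Z=\{z_1,z_2\}$, let $D$ span $(K_N,+)$, and add one positive edge $z_1x$ with $x\in D$.
\begin{itemize}
\item This signed graph is connected and $C_4^-$-free, and $u'u''$ is its only negative edge.
\item Its index has a nonnegative eigenvector. Indeed, $\lambda>1$ forces $q_{u'}=q_{u''}=(q_{z_1}+q_{z_2})/(\lambda+1)$, and $A(F)+\frac{2}{\lambda+1}M_Z$ is nonnegative and irreducible.
\item Yet $m=\binom N2+6$ and $\lambda>N-1$, so $\lambda^2+2\lambda>N^2-1>N^2-N+8=2m-4$ once $N\ge10$.
\end{itemize}
Hence no bound $\lambda^2+2\lambda\le 2m-4-\Phi(c)$ with $\Phi(2)\ge0$ can come out of the bookkeeping. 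The deficit from the walks $u'zu''$ and $u'u''z$ is of order $q_{u''}q_z$, while the slack in Lemma~\ref{lemma2.7} is of order $N$. The contradiction has to come from extremality applied locally at the triangle.

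\paragraph{How the paper supplies this.} For $A=\varnothing$, it eliminates $u',u''$ to get $\lambda=\rho\bigl(A(F)+\frac{2}{\lambda+1}M_Z\bigr)$.
\begin{itemize}
\item The bound $\lambda\bigl(\lambda-\frac{2c}{\lambda+1}\bigr)\le 2m-4c-2$ disposes only of $c>\lambda+\frac12+\frac1{2\lambda}$.
\item Otherwise it replaces the edges $u'z,u''z$ ($z\ne u$) by the edges $uz$ plus $c-1$ pendant edges at the maximal vertex $u\in Z$. It certifies $\lambda'>\lambda$ by a second Schur complement, in which the pendants contribute $\frac{c-1}{\lambda'}E_{uu}$. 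This gain is invisible to the old Rayleigh quotient.
\end{itemize}
For $A\ne\varnothing$, it first proves by relocation that $\max_A q<q_{u''}$ and $\max_Z q<q_{u'}$. This gives (11) and (12) and puts the maximum entry at $u'$ or in $D$; only that justifies bounding by $q_{u'}$ at all. Only then does a walk count, combined with Lemma~\ref{lemma2.7}, finish the argument, with $c=2$ handled separately.

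\paragraph{The fallback does not repair this.} Your stated change $2\sum_z q_{u''}(q_{\mathrm{new}}-q_z)$ does not match the move you describe. The new edges are incident to $u'$ or to $w$, so the gain is $2q_{u'}q_x$ or $2q_wq_x$, against a loss of $2q_{u''}\sum_{z\ne z_0}q_z$. You give no reason that admissible targets exist: new neighbours of $u'$ must avoid $N(z_0)$, $w$ may already be adjacent to all of $D$, and $D$ may be empty. You also give no reason that their entries are large enough to offset the loss. For the residual case you only return to the walk-count inequality, which the example shows is unavailable without further extremality-derived information. What is missing is a local comparison at the gadget whose gain is certified in every configuration, and your proposal leaves exactly this as the ``main obstacle''.
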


\noindent\emph{Proof.}
By Claims~\ref{claim3} and \ref{claim4}, $Z\ne\varnothing$ and $B=\varnothing$.
Suppose for contradiction that $c\ge2$. We distinguish the following two cases.

\medskip
\noindent\textbf{Case 1.} $A=\varnothing$.

Set $F=\widetilde{\Gamma}-\{u',u''\}$.
Then $|E(F)|=m-2c-1$.
Write $\mathbf q_F=\mathbf q|_{V(F)}$ for the restriction
of $\mathbf q$ to $V(F)$. Note $Z$ is an independent set since $\widetilde{\Gamma}$ is $C_4^-$-free
. Let $M_Z$ be the matrix indexed by $V(F)$ with entries
$
(M_Z)_{vw}=\begin{cases}1,&v,w\in Z,\\0,&\text{otherwise}.\end{cases}
$
Note that
$
\lambda q_{u'}=-q_{u''}+\sum_{z\in Z}q_z$ and $
\lambda q_{u''}=-q_{u'}+\sum_{z\in Z}q_z.
$
Since $\lambda>1$, we have
$
(\lambda-1)(q_{u'}-q_{u''})=0$ and $
q_{u'}=q_{u''}=\frac{\sum_{z\in Z}q_z}{\lambda+1}.
$
For every $v\in V(F)$,
\[
\lambda q_v
=\sum_{w\in V(F)}A(F)_{vw}q_w+
\begin{cases}
 \displaystyle\frac{2}{\lambda+1}\sum_{z\in Z}q_z,&v\in Z,\\[1mm]
 0,&v\notin Z.
\end{cases}
\]
Thus
$
\lambda\mathbf q_F=\left(A(F)+\frac2{\lambda+1}M_Z\right)\mathbf q_F.$
Note that $A(F)+\frac2{\lambda+1}M_Z$ is nonnegative and irreducible, and $\mathbf q_F$ is a nonnegative eigenvector since $q_z>0$ for $z\in Z$.
By Lemma~\ref{lemma2.5},
$
\lambda=\rho\!\left(A(F)+\frac2{\lambda+1}M_Z\right).
$ Set $\alpha=2/(\lambda+1)$. We first show that
$\lambda(\lambda-\alpha c)\le2m-4c-2$.
Set $D=V(F)\setminus Z$. If $D=\varnothing$, then $m-2c-1=0$ and
$\lambda=\alpha c$, so the assertion holds. 
Assume $D\neq \varnothing$. By appropriately ordering the vertices, we have
\[
A(F)+\alpha M_Z=
\bordermatrix{
	& Z & D \cr
	Z & \alpha J_c & \mathcal B \cr
	D & \mathcal B^T & A(\widetilde{\Gamma}[D]) \cr
}.
\]
Here $\mathcal B$ is the inter-block adjacency submatrix between vertex subsets $Z$ and $D$, and $\operatorname{tr}(\mathcal B\mathcal B^T)=\|\mathcal B\|_F^2=|E(Z,D)|$.
Let $(\boldsymbol{\xi}^T,\boldsymbol{\zeta}^T)^T$ be a unit Perron vector of this matrix, and set $\|\boldsymbol{\xi}\|^2=\tau$ and $\|\boldsymbol{\zeta}\|^2=1-\tau$. Then
\[
\begin{aligned}
	\lambda &=
	\begin{pmatrix}
		\boldsymbol{\xi}^T & \boldsymbol{\zeta}^T
	\end{pmatrix}
	\begin{pmatrix}
		\alpha J_c & \mathcal B\\
		\mathcal B^T & A(\widetilde{\Gamma}[D])
	\end{pmatrix}
	\begin{pmatrix}
		\boldsymbol{\xi}\\
		\boldsymbol{\zeta}
	\end{pmatrix}\\
	&=\alpha(\boldsymbol 1^T\boldsymbol\xi)^2+2\boldsymbol\xi^T\mathcal B\boldsymbol\zeta+\boldsymbol\zeta^T A(\widetilde{\Gamma}[D])\boldsymbol\zeta\\
	&\le \alpha c\tau+2\|\mathcal B\|_2\sqrt{\tau(1-\tau)}+\rho(\widetilde{\Gamma}[D])(1-\tau)\\
	&\le \alpha c\tau+2\sqrt{|E(Z,D)|}\sqrt{\tau(1-\tau)}+\rho(\widetilde{\Gamma}[D])(1-\tau)\\
	&=(\sqrt{\tau},\sqrt{1-\tau})
	\begin{pmatrix}
		\alpha c & \sqrt{|E(Z,D)|}\\
		\sqrt{|E(Z,D)|} & \rho(\widetilde{\Gamma}[D])
	\end{pmatrix}
	\begin{pmatrix}
		\sqrt{\tau}\\
		\sqrt{1-\tau}
	\end{pmatrix}.
\end{aligned}
\]
So $\lambda\le\rho(Q)$ , where  $Q=\begin{pmatrix}\alpha c & \sqrt{|E(Z,D)|}\\ \sqrt{|E(Z,D)|} & \rho(\widetilde{\Gamma}[D])\end{pmatrix}$.
By Lemma~\ref{lemma2.4}, $\rho(\widetilde{\Gamma}[D])^2+\rho(\widetilde{\Gamma}[D])\le2(m-2c-1-|E(Z,D)|)$.
If $\lambda\le\alpha c$, then $\lambda(\lambda-\alpha c)\le0\le2m-4c-2$.
If $\alpha c<\lambda\le \rho(\widetilde{\Gamma}[D])$, then
$\lambda(\lambda-\alpha c)\le\lambda^2\le \rho(\widetilde{\Gamma}[D])^2\le2(m-2c-1-|E(Z,D)|)\le2m-4c-2$.
It remains to consider $\lambda>\max\{\alpha c,\rho(\widetilde{\Gamma}[D])\}$.
The characteristic polynomial of $Q$ is
$(x-\alpha c)(x-\rho(\widetilde{\Gamma}[D]))-|E(Z,D)|$. By $\lambda\le\rho(Q)$, we have $(\lambda-\alpha c)(\lambda-\rho(\widetilde{\Gamma}[D]))\le |E(Z,D)|$.
Therefore
\[\begin{aligned}
2m-4c-2
&=2|E(Z,D)|+2(m-2c-1-|E(Z,D)|)\\
&\ge2(\lambda-\alpha c)(\lambda-\rho(\widetilde{\Gamma}[D]))+\rho(\widetilde{\Gamma}[D])^2+\rho(\widetilde{\Gamma}[D])
\end{aligned}\]
and
\[\begin{aligned}
	2m-4c-2-\lambda(\lambda-\alpha c)
	&\ge 2(\lambda-\alpha c)(\lambda-\rho(\widetilde{\Gamma}[D]))+\rho(\widetilde{\Gamma}[D])^2+\rho(\widetilde{\Gamma}[D])-(\lambda-\alpha c)\lambda \\
	&= \rho(\widetilde{\Gamma}[D])^2+\big(1-2(\lambda-\alpha c)\big)\rho(\widetilde{\Gamma}[D])+(\lambda-\alpha c)\lambda.
\end{aligned}\]
If $\lambda-\alpha c\le1/2$, then $\rho(\widetilde{\Gamma}[D])^2+\big(1-2(\lambda-\alpha c)\big)\rho(\widetilde{\Gamma}[D])+(\lambda-\alpha c)\lambda\ge 0$. 
If $\lambda-\alpha c>1/2$, then
\[\begin{aligned}
	\rho(\widetilde{\Gamma}[D])^2+\big(1-2(\lambda-\alpha c)\big)\rho(\widetilde{\Gamma}[D])+(\lambda-\alpha c)\lambda
	&\ge (\lambda-\alpha c)\lambda-\left(\lambda-\alpha c-\frac12\right)^2 \\
	&= (\lambda-\alpha c)(\alpha c+1)-\frac14 \\
	&>0.
\end{aligned}\]
Hence $\lambda(\lambda-\alpha c)\le2m-4c-2$ in every case.
Substituting $\alpha=2/(\lambda+1)$, we obtain
$
\lambda\left(\lambda-\frac{2c}{\lambda+1}\right)\le2m-4c-2.
$
Set $g_2(x)=x+\frac12+\frac1{2x}$ for $x>1$.
We split further into two subcases.

\renewcommand{\thesubcase}{1.\arabic{subcase}}
\setcounter{subcase}{0}

\begin{subcase}
	$c\le g_2(\lambda)=\lambda+\frac12+\frac1{2\lambda}$.
\end{subcase}
Choose $u\in Z$ such that $q_u=\max_{z\in Z}q_z>0$.
We construct a signed graph $\Gamma'$ from $\widetilde{\Gamma}$
by deleting $u'z,u''z$ for every $z\in Z\setminus\{u\}$ and adding
the positive edge $uz$ and a new positive pendant edge at $u$.
Clearly, $|E(\Gamma')|=|E(\widetilde{\Gamma})|=m$, the negative triangle formed by vertices $u',u,u''$ remains a negative cycle in $\Gamma'$ and the negative edge $u'u''$
belongs to no other cycle. Thus $\Gamma'\in\mathcal F_m$. Write $\lambda'=\lambda_1(\Gamma')$, and set
 $\alpha'=2/(\lambda'+1)$.
Let $A(K_{1,c-1})$ be indexed by $V(F)$ with center at the common neighbor $u$ of $u'$ and $u''$,
leaves $Z\setminus\{u\}$, and zero entries elsewhere.
A new pendant vertex together with $u,u'$ induces a positive
path $P_3$, so $\lambda'\ge\sqrt2>1$.
By eliminating $u',u''$ from $\widetilde{\Gamma}$ , we obtain
$\lambda=\rho(A(F)+\alpha M_Z)$.
In $\Gamma'$, if we eliminate $u',u''$ and the $c-1$ new pendant vertices, then
the corresponding principal block of $\lambda'I-A(\Gamma')$
is positive definite, and its Schur complement is positive
semidefinite and singular. Hence
\[
\lambda'=\rho\!\left(A(F)+A(K_{1,c-1})+
\left(\alpha'+\frac{c-1}{\lambda'}\right)E_{uu}\right).
\] Next, we prove that $\lambda'>\lambda$. Define
\[
f(\tau)=\rho\!\left(A(F)+A(K_{1,c-1})+
\left(\frac2{\tau+1}+\frac{c-1}{\tau}\right)E_{uu}\right)-\tau.
\ (\tau>1)
\]
Then $f(\lambda')=0$. We aim to show  $f(\lambda)>0$. Set
$T=\sum_{z\in Z\setminus\{u\}}q_z/q_u$, so $0\le T\le c-1$.
Then
\[\begin{aligned}
	f(\lambda)
	&= \rho\!\left(A(F)+A(K_{1,c-1})+\left(\alpha+\frac{c-1}{\lambda}\right)E_{uu}\right)-\lambda\\
	&\ge \frac{\mathbf q_F^T\!\left(A(F)+A(K_{1,c-1})+\left(\alpha+\frac{c-1}{\lambda}\right)E_{uu}\right)\mathbf q_F}{\mathbf q_F^T\mathbf q_F}
	-\frac{\mathbf q_F^T\bigl(A(F)+\alpha M_Z\bigr)\mathbf q_F}{\mathbf q_F^T\mathbf q_F}\\
	&= \frac{\mathbf q_F^T\!\left[A(K_{1,c-1})+\left(\alpha+\frac{c-1}{\lambda}\right)E_{uu}-\alpha M_Z\right]\mathbf q_F}{\mathbf q_F^T\mathbf q_F}\\
	&= \frac{2q_u\sum_{z\in Z\setminus\{u\}} q_z+\left(\alpha+\frac{c-1}{\lambda}\right)q_u^2-\alpha\left(\sum_{z\in Z} q_z\right)^2}{\mathbf q_F^T\mathbf q_F}\\
	&= \frac{2Tq_u^2+\left(\alpha+\frac{c-1}{\lambda}\right)q_u^2-\alpha q_u^2(1+T)^2}{\mathbf q_F^T\mathbf q_F}\\
	&= \frac{q_u^2\!\left(\frac{c-1}{\lambda}+2(1-\alpha)T-\alpha T^2\right)}{\mathbf q_F^T\mathbf q_F}.
\end{aligned}\]
Define $\Phi(T)=\frac{c-1}{\lambda}+2(1-\alpha)T-\alpha T^2$. The function $\Phi$ is concave, so its minimum on $[0,c-1]$ is attained
at one of the endpoints. Moreover,
$
\Phi(0)=\frac{c-1}{\lambda}>0,
\Phi(c-1)=(c-1)\left(\frac1\lambda+2-\alpha(c+1)\right)$ $\ge0,
$
i.e.,
$2\lambda^2+\lambda+1\ge2c\lambda$. Thus $\Phi(T)\ge0$ yielding $f(\lambda)\ge0$. If $f(\lambda)=0$,
then by Rayleigh equality, $\mathbf q_F$ is an eigenvector of $\left(A(F)+A(K_{1,c-1})+\left(\alpha+\frac{c-1}{\lambda}\right)E_{uu}\right)$ for $\lambda$, $T=c-1$, $c=g_2(\lambda)$, and $q_z=q_u$ for every $z\in Z$.
For each $z\in Z\setminus\{u\}$, 
\[
0=\left(\left[A(K_{1,c-1})+
\left(\alpha+\frac{c-1}{\lambda}\right)E_{uu}-\alpha M_Z\right]
\mathbf q_F\right)_z=q_u(1-\alpha c).
\]
Hence $\alpha c=1$.
Since $c=g_2(\lambda)$ , $\alpha(c+1)=2+1/\lambda$.
Then $\alpha=1+1/\lambda$.
Substituting $\alpha=2/(\lambda+1)$ yields
$\lambda^2+1=0$, a contradiction.
Therefore $f(\lambda)>0$. Since the spectral-radius term is nonincreasing, $f(\tau)$ is strictly decreasing. Since $f(\lambda')=0<f(\lambda)$, we get
$\lambda_1(\Gamma')=\lambda'>\lambda
=\lambda_1(\widetilde{\Gamma})$,
contrary to the extremality of $\widetilde{\Gamma}$.

\begin{subcase}
$c>g_2(\lambda)=\lambda+\frac12+\frac1{2\lambda}$.
\end{subcase}
Recall that $
\lambda\left(\lambda-\frac{2c}{\lambda+1}\right)\le2m-4c-2$. Thus $c\le \frac{(2m-2-\lambda^2)(\lambda+1)}{2(\lambda+2)}$.
By Lemma~\ref{lemma2.7} ,
\[c\le\frac{(2m-2-\lambda^2)(\lambda+1)}{2(\lambda+2)} <\frac{(\lambda+1)^2}{\lambda+2}<g_2(\lambda)<c,
\]
 a contradiction.

Both subcases are impossible. Thus $c=1$ under Case 1.

\medskip
\noindent\textbf{Case 2.} $A\ne\varnothing$.

Suppose that $|A|=k\ge1$.
We first show that
$\max_{v\in A}q_v<q_{u''}$ and $\max_{z\in Z}q_z<q_{u'}$. Otherwise,
suppose that there exists $w_0\in A$ satisfying $q_{w_0}\ge q_{u''}$.
Choose $z_0\in Z$. We construct a new signed graph $\Gamma'$ from
$\widetilde{\Gamma}$ by deleting $u''z$ and adding the positive edge
$w_0z$ for every $z\in Z\setminus\{z_0\}$.
Clearly,
$|E(\Gamma')|=|E(\widetilde{\Gamma})|=m$, the negative triangle formed by vertices $u',z_0,u''$ remains a negative cycle in $\Gamma'$ and no negative $4$-cycle is created since $N_{\Gamma'}(u'')\setminus\{u'\}=\{z_0\}$ and
$N_{\Gamma'}(z_0)\cap(A\cup Z)=\varnothing$. Thus $\Gamma'\in\mathcal F_m$, and
\[
 \lambda_1(\Gamma')\ge\mathbf q^TA(\Gamma')\mathbf q
 =\lambda+2(q_{w_0}-q_{u''})\sum_{z\in Z\setminus\{z_0\}}q_z\ge\lambda.
\]
If $\lambda_1(\Gamma')=\lambda$, then 
$A(\Gamma')\mathbf q=\lambda\mathbf q$.
However,
$
 0=\bigl((A(\Gamma')-A(\widetilde{\Gamma}))\mathbf q\bigr)_{w_0}
 =\sum_{z\in Z\setminus\{z_0\}}q_z>0,
$
a contradiction. Hence $\max_{v\in A}q_v<q_{u''}$. Next assume some $z\in Z$ satisfies $q_z\ge q_{u'}$.
We construct a new signed graph $\Gamma''$ from $\widetilde{\Gamma}$ by deleting
$u'v$ and adding the positive edge $zv$ for every $v\in A$. Clearly,
$|E(\Gamma'')|=|E(\widetilde{\Gamma})|=m$,
the negative triangle formed by vertices $u',z,u''$ remains a negative cycle in $\Gamma''$, and no negative $4$-cycle is created.
Thus $\Gamma''\in\mathcal F_m$ and
\[
 \lambda_1(\Gamma'')\ge\mathbf q^TA(\Gamma'')\mathbf q
 =\lambda+2(q_z-q_{u'})\sum_{v\in A}q_v\ge\lambda.
\]
If $\lambda_1(\Gamma'')=\lambda$, then
$A(\Gamma'')\mathbf q=\lambda\mathbf q$.
However,
$
 0=\bigl((A(\Gamma'')-A(\widetilde{\Gamma}))\mathbf q\bigr)_z
 =\sum_{v\in A}q_v>0,
$
a contradiction. Hence $\max_{z\in Z}q_z<q_{u'}$.

 Note that
$
\lambda q_v
=q_{u'}+\sum_{z\in N_{\widetilde{\Gamma}}(v)\setminus\{u'\}}q_z
\ge q_{u'}>0.
$
Thus $q_v>0$ for every $v\in A$
and $
q_{u''}>\max_{v\in A}q_v\ge{q_v}\ge\frac{q_{u'}}{\lambda}>0.$
 Set $q_{u'}=\theta' q_{u''}$. 
 The eigen-equations at $u'$ and $u''$ give
 \begin{align}
 	\displaystyle\sum_{z\in Z} q_z &= (\theta'+\lambda)q_{u''} \tag{9},\\
 	\displaystyle\sum_{v\in A} q_v &= (\lambda-1)(\theta'-1) q_{u''} \tag{10}.
 \end{align}
 So $\theta'>1$. For any $v\in A$, $\lambda q_v\ge q_{u'}=\theta' q_{u''}$ and $q_v<q_{u''}$,
which give $\theta'<\lambda$.
Moreover, $m\ge1+2c+k\ge6$.
The graph $\Gamma_m^*$ for $t=0$ or $\Gamma_m^{**}$ for $t>0$ with $m$ edges contains $\Gamma_6^*$ as an induced
signed subgraph, so
$\lambda\ge\lambda_1(\Gamma_6^*)=\sqrt5>2$.
For every $v\in A$,
$
 \lambda q_v=q_{u'}+\sum_{w\in N(v)\setminus\{u'\}}q_w\ge\theta' q_{u''}.
$ Then
$
\sum_{v\in A}\lambda q_v \ge \sum_{v\in A}\theta' q_{u''}= k\theta' q_{u''}.$
By equation (10), then
$
(\lambda-1)(\theta'-1) \ge \frac{k\theta'}{\lambda}.
$
And $
\sum_{v\in A} q_v < \sum_{v\in A} q_{u''} = k q_{u''}
$ since $\max_{v\in A} q_v < q_{u''}$,
so
$ (\lambda-1)(\theta'-1) < k.
$
Hence
\[
\frac{k\theta'}{\lambda} \le (\lambda-1)(\theta'-1) < k.\tag*{(11)}
\] Similarly, for $z\in Z$,
$
 \lambda q_z=q_{u'}+q_{u''}+\sum_{w\in N(z)\setminus\{u',u''\}}q_w
 \ge(\theta'+1)q_{u''}.
$
By $q_z<q_{u'}=\theta' q_{u''}$ and equation (9), we have
\[
 \frac{c(\theta'+1)}{\lambda}\le\theta'+\lambda<c\theta'.\tag*{(12)}
\]
If $c=2$, then $Z=\{z_1,z_2\}$ and
$
 (\lambda+\theta')q_{u''}=q_{z_1}+q_{z_2}<2\theta' q_{u''}
$,
and so $\lambda<\theta'$, a contradiction.
Now, suppose $c\ge3$. Let $q_w=\max_vq_v$, then either $w\in D$ or $w=u'$ since $\max_{v\in A}q_v<q_{u''}<q_{u'}$ and $\max_{z\in Z}q_z<q_{u'}$.
We treat these two subcases separately.
\renewcommand{\thesubcasee}{2.\arabic{subcasee}}
\renewcommand{\theHsubcasee}{2.\arabic{subcasee}}
\setcounter{subcasee}{0}
\begin{subcasee}
	  $w\in D$.\end{subcasee}
Note 
$
 \lambda q_w=\sum_{v\in N(w)}q_v,$  $\lambda^2 q_w=\sum_{v\in N(w)}\lambda q_v \le \sum_{v\in N(w)}  d(v) q_w$ , so $\lambda^2\le\sum_{v\in N(w)} d(v)$.
Let $n=|V(\widetilde{\Gamma})|$. Thus
\[
\begin{aligned}
 \sum_{v\notin N(w)} d(v)
 &\ge d(w)+ d(u')+ d(u'')+n- d(w)-3\\
 &=n+ d(u')+ d(u'')-3\\
 &=n+2c+k-1.
\end{aligned}
\]
Since $\sum_v d(v)=2m$ and $n\ge c+k+3$, we obtain
$
 \lambda^2\le2m-n-2c-k+1\le2m-2-3c-2k.
$
By Lemma~\ref{lemma2.7},
 $3c+2k<2\lambda+2$.
By equations (11) and (12), we have
 $k>(\lambda-1)(\theta'-1)$ and $c>1+\lambda/\theta'$, so
$
 3c+2k>3+\frac{3\lambda}{\theta'}+2(\lambda-1)(\theta'-1),
$
 that is, $3+\frac{3\lambda}{\theta'}+2(\lambda-1)(\theta'-1)<3c+2k<2\lambda+2$. However,
\[
\begin{aligned}
 3+\frac{3\lambda}{\theta'}+2(\lambda-1)(\theta'-1)-2\lambda-2
 &=3+\frac{3\lambda}{\theta'}+2(\lambda-1)\theta'-4\lambda\\
 &\ge3+2\sqrt{6\lambda(\lambda-1)}-4\lambda>0,\ (\text{since}\ \lambda >2)
\end{aligned}
\]
a contradiction. This excludes the possibility $w\in D$.
\begin{subcasee}
	 $w=u'$
\end{subcasee}
Note that $E(A,Z)=E(Z)=\varnothing$ and
$
 m=1+2c+k+e_A+e_{AD}+e_{ZD}+e_D,
$
then
\[
 \lambda^2q_{u'}=-\lambda q_{u''}+
 \sum_{v\in A\cup Z}\lambda q_v
 \le-\lambda q_{u''}+q_{u'}\sum_{v\in A\cup Z} d(v).
\]
Also
$
 \sum_{v\in A\cup Z} d(v)
 =2c+k+2e_A+e_{AD}+e_{ZD}=m-1+e_A-e_D.
$
Since $q_{u'}=\theta' q_{u''}>0$,
$
 \lambda^2\le m-1+e_A-e_D-\frac\lambda\theta'$.
For $z\in Z$, 
$\lambda q_z=q_{u'}+q_{u''}+\sum_{v\in N(z)\cap D}q_{v}$ 
and
$
 \sum_{zv\in E(Z,D)}q_{v}
 =q_{u''}\bigl[\lambda(\theta'+\lambda)-c(\theta'+1)\bigr].
$
Since $q_{v}\le q_{u'}=\theta' q_{u''}$ for every ${v}\in D$,
$
 e_{ZD}\ge\frac{\lambda(\theta'+\lambda)-c(\theta'+1)}{\theta'}.
$
Moreover,
$
 e_A-e_D=m-1-2c-k-e_{AD}-e_{ZD}-2e_D
 \le m-1-2c-k-e_{ZD}.
$
Consequently,
\[
\begin{aligned}
 \lambda^2
 &\le2m-2-2c-k-e_{ZD}-\frac\lambda\theta'\\
 &\le2m-2-c-k-\lambda+\frac{c-\lambda^2-\lambda}{\theta'}.
\end{aligned}
\]
Since $2m-6<\lambda^2+2\lambda-2$, it follows that
$
 c+k-\lambda-2+\frac{\lambda^2+\lambda-c}{\theta'}<0.
$
However, 
$
 c\le\frac{\lambda(\theta'+\lambda)}{\theta'+1}
 <\frac{\lambda(\lambda+1)}2
$ since $c(\theta'+1)/\lambda\le\theta'+\lambda$,
where $1<\theta'<\lambda$. Thus $\lambda^2+\lambda-c>0$ and
$
 \frac{\lambda^2+\lambda-c}{\theta'}>\lambda+1-\frac c\lambda.
$
Hence
$
 c+k-\lambda-2+\frac{\lambda^2+\lambda-c}{\theta'}
 >c\left(1-\frac1\lambda\right)+k-1>0,
$
a contradiction.

Both cases are impossible. Therefore $c=|Z|=1$. Write $Z=\{u\}$.
\renewcommand{\thecasee}{\arabic{casee}}
\renewcommand{\theHcasee}{\theHclaim.\arabic{casee}}
\setcounter{casee}{0}

\begin{claim}\label{claim6}
$A=\varnothing$.
\end{claim}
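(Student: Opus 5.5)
The plan is to argue by contradiction through a single edge-rotation, exactly in the style of Claims~\ref{claim4} and~\ref{claim5}, using only the eigen-equation at $u''$ to get the needed inequality between coordinates.

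Suppose $A\ne\varnothing$. By Claims~\ref{claim4} and~\ref{claim5} we have $B=\varnothing$ and $Z=\{u\}$, so $N(u'')=\{u',u\}$. The eigen-equation at $u''$ reads $\lambda q_{u''}=-q_{u'}+q_u$. Since $q_{u''}\ge 0$, this gives
\[
q_u=q_{u'}+\lambda q_{u''}\ge q_{u'} .
\]
This inequality is the key point. Recall also, from the discussion before Claim~\ref{claim3}, that $q_v>0$ for every $v\in A$.

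Now construct $\Gamma'$ from $\widetilde{\Gamma}$ by deleting the positive edge $u'v$ and adding the positive edge $uv$ for every $v\in A$.
\begin{enumerate}
\item[(a)] These are new edges, since $E(A,Z)=\varnothing$. Hence $|E(\Gamma')|=m$.
\item[(b)] In $\Gamma'$ the vertices $u'$ and $u''$ are adjacent only to each other and to $u$. Therefore the only cycle through the unique negative edge $u'u''$ is the negative triangle $uu'u''$.
\item[(c)] Consequently $\Gamma'$ is unbalanced and $C_4^-$-free, i.e.\ $\Gamma'\in\mathcal F_m$.
\end{enumerate}
By the Rayleigh quotient,
\[
\lambda_1(\Gamma')\ge \mathbf q^TA(\Gamma')\mathbf q=\lambda+2(q_u-q_{u'})\sum_{v\in A}q_v\ge\lambda .
\]

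It remains to rule out $\lambda_1(\Gamma')=\lambda$, which is the only place needing care. If equality held, then $\mathbf q$ would be an eigenvector of $A(\Gamma')$ for $\lambda$. Hence $\bigl((A(\Gamma')-A(\widetilde\Gamma))\mathbf q\bigr)_x=0$ for every vertex $x$. Taking $x=u$ gives
\[
0=\bigl((A(\Gamma')-A(\widetilde\Gamma))\mathbf q\bigr)_u=\sum_{v\in A}q_v>0,
\]
a contradiction. Thus $\lambda_1(\Gamma')>\lambda$, contradicting the extremality of $\widetilde{\Gamma}$, and so $A=\varnothing$.

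I do not expect a real obstacle. The only subtle point is obtaining $q_u\ge q_{u'}$, and this follows directly from the eigen-equation at $u''$ together with nonnegativity of $\mathbf q$, so no case split on the relative sizes of $q_u$ and $q_{u'}$ is needed.
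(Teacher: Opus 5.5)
Your proposal is correct and follows the paper's proof essentially step for step. It uses the same rotation of the edges $u'v$ ($v\in A$) to $uv$, the same inequality $q_u=q_{u'}+\lambda q_{u''}\ge q_{u'}$ from the eigen-equation at $u''$, and the same equality-case contradiction read off at the coordinate $u$.
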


\noindent\emph{Proof.}
Otherwise,
suppose that $A\ne\varnothing$. Note $q_u=q_{u'}+\lambda q_{u''}\ge q_{u'}$ since $\lambda q_{u''}=-q_{u'}+q_u$.
We construct a new signed graph $\Gamma'$ from $\widetilde{\Gamma}$ by deleting each positive edge $u'v$ and adding new positive edges $uv$ for all $v\in A$.
Clearly,
$|E(\Gamma')|=|E(\widetilde{\Gamma})|=m$, the negative triangle formed by vertices $u',u,u''$ remains a negative cycle in $\Gamma'$ and no negative
$4$-cycle is created since $N_{\Gamma'}(u')\setminus\{u''\}=N_{\Gamma'}(u'')\setminus\{u'\}=\{u\}$. Thus $\Gamma'\in\mathcal F_m$.
Since $q_u\ge q_{u'}$, we have
$
\lambda_1(\Gamma')
\ge\mathbf q^TA(\Gamma')\mathbf q
=\lambda+2(q_u-q_{u'})\sum_{v\in A}q_v
\ge\lambda.
$
If $\lambda_1(\Gamma')=\lambda$, then $\mathbf q$ is an eigenvector of
$A(\Gamma')$ corresponding to $\lambda$.
However,
$
0=\bigl((A(\Gamma')-A(\widetilde{\Gamma}))\mathbf q\bigr)_u
=\sum_{v\in A}q_v>0,
$
a contradiction. Hence
$\lambda_1(\Gamma')>\lambda=\lambda_1(\widetilde{\Gamma})$,
which contradicts the extremality of $\widetilde{\Gamma}$.
Therefore $A=\varnothing$.

By Claims~\ref{claim4}--\ref{claim6}, $A=B=\varnothing$ and $Z=\{u\}$. Put $H=\widetilde{\Gamma}-\{u',u''\}$. All edges of $H$ are positive and $|E(H)|=m-3$. 
By Claim~\ref{claim1}, every connected component of $H$ must contain $u$ since $u',u''$ have no positive neighbor other than $u$. Hence $H$ is connected, and $\widetilde{\Gamma}$ is obtained from $(H,+)$ by adding the positive edges $uu',uu''$ and the negative edge $u'u''$. This completes the proof of Theorem~\ref{thm1}.

\section{Proof of Theorem \ref{thm2}}
Theorem~\ref{thm1} proves that any extremal graph is constructed by attaching a negative triangle to an all-positive signed graph $H$ with $m-3$ edges. In the subsequent proof, we first consider the case $t=0$, and then discuss the case $1\le t\le d-1$ by characterizing the properties of $H$. 

Consider a partition $\mathcal{P}=\{V_1,V_2,\dots,V_l\}$ of
the vertex set $V$. Let $A=(a_{uv})_{u,v\in V}$ be a real
symmetric matrix whose rows and columns are partitioned
according to $\mathcal{P}$. Write $A_{ij}$ for the submatrix
formed by the rows indexed by $V_i$ and columns indexed by
$V_j$, and let $q_{ij}$ be the average row sum of $A_{ij}$,
namely,
$q_{ij}=\frac{1}{|V_i|}\sum_{u\in V_i}\sum_{v\in V_j}a_{uv}$.
The $l\times l$ matrix $Q=(q_{ij})_{1\le i,j\le l}$ is called
the quotient matrix of $A$ with respect to the partition
$\mathcal{P}$. The partition is equitable if each submatrix
$A_{ij}$ has constant row sums.

\setcounter{lemma}{7}
\begin{remark}\cite{WangPengHou2026}\label{remark8}.
	Let $m=\binom d2+{t_0}+1$, where $d\ge4$ and $2\le {t_0}\le d-2$.
	For the signed graphs $\dot G'_e$ and $\dot G''_e$ in
	Lemma~\ref{lemma2.6}, let $Q_{\dot G'_e}$ and $Q_{\dot G''_e}$ be the quotient matrices of $\dot G'_e$ and $\dot G''_e$, respectively.
	Then $\lambda_1(\dot G'_e)$ and $\lambda_1(\dot G''_e)$ are the
	largest zeros of $f(Q_{\dot G'_e},x)$ and $f(Q_{\dot G''_e},x)$,
	respectively, where
	\[
	\begin{aligned}
		f(Q_{\dot G'_e},x)
		&=x^4+(3-d)x^3+(2-2d-{t_0})x^2\\
		&+(-2+d{t_0}-{t_0}^2)x-{t_0}^2+d{t_0}+d+{t_0}-2,\\
		f(Q_{\dot G''_e},x)
		&=x^5+(3-d)x^4+(2-2d-{t_0})x^3+(-2+d{t_0}-{t_0}^2)x^2\\
		&+(d{t_0}-{t_0}^2+4{t_0}-5)x+{t_0}^2-d{t_0}+d-1.
	\end{aligned}
	\]
	
\end{remark}
\medskip
\noindent \emph{\textbf{Proof of Theorem \ref{thm2}}.} We first prove part (i), where $t=0$ and $m-3=\binom d2$.
If $d=1$, then $m=3$ and $\widetilde{\Gamma}$ is switching isomorphic to
$\Gamma_3^*$, apart from isolated vertices.
If $d=2$, then $m=4=\binom32+1$.
By Lemma~\ref{lemma2.6}(2),  $\widetilde{\Gamma}$ is switching isomorphic to $\dot G_4=\Gamma_4^*$,
apart from isolated vertices.
If $d\ge4$, then $m=\binom d2+3$.
By Lemma~\ref{lemma2.6}(3), with ${t_0}=2$,
every unrestricted maximizer is switching isomorphic either to 
$\dot G'_e$ or to $\dot G''_e=\Gamma_m^*$, apart from isolated vertices.
By Remark~\ref{remark8}, with ${t_0}=2$, the indices of
$\dot G'_e$ and $\dot G''_e$ are the largest zeros of
$f_1(x)$ and $f_2(x)$, respectively, where
$
f_1(x)=x^4+(3-d)x^3-2dx^2+(2d-6)x+3d-4
$ and
$
f_2(x)=(x^2-1)
\bigl[x^3+(3-d)x^2+(1-2d)x+d-3\bigr].
$
We obtain $xf_1(x)-f_2(x)=(d-3)(x+1)$. Then $f_2\big(\lambda_1(\dot G_e')\big) = -(d-3)\big(\lambda_1(\dot G_e')+1\big) < 0$, so $\lambda_1(\Gamma_m^*) > \lambda_1(\dot G_e').$
 Thus $\widetilde{\Gamma}$ is switching isomorphic to $\Gamma_m^*$, apart from isolated vertices. It remains to consider $d=3$ and then $m=6$.
By Theorem~\ref{thm1}, $H$ is a connected graph with three edges,
and hence $H\in \{ K_3, P_4, K_{1,3}\}.$
By a direct calculation of the characteristic polynomials of
the resulting signed graphs, we have
$\lambda_1(\widetilde{\Gamma})\le\sqrt5
=\lambda_1(\Gamma_6^*)$,
with equality if and only if $H=K_3$.
 Hence $\widetilde{\Gamma}$ is switching
isomorphic to $\Gamma_6^*$, apart from isolated vertices. Combining all cases, $\widetilde{\Gamma} $ is switching isomorphic to $\Gamma_m^* \cup r_0K_1$ for some integer $r_0\ge 0$ if $t=0$.

We next prove part (ii) for $1\le t\le d-1$, where $m-3=\binom d2+t$. The vertices $u',u$ and any neighbor of $u$ in $H$ induce a positive path $P_3$, so $\lambda\ge\sqrt2>1$. Recall that  $\lambda^*=\lambda_1(\Gamma_m^{**})$.

\begin{claim}\label{claim7}
$N_H(u)=V(H)\setminus\{u\}$.
\end{claim}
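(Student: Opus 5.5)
The plan is to show first that the attachment vertex $u$ carries the largest eigenvector entry on $H$, and then to use an edge-rotation argument. We work with $\widetilde{\Gamma}$ and its unit nonnegative eigenvector $\mathbf q$ for $\lambda=\lambda_1(\widetilde{\Gamma})$. By the proof of Theorem~\ref{thm1}, $\widetilde{\Gamma}$ is $(H,+)$ with the triangle $u u' u''$ attached, where $uu',uu''$ are positive and $u'u''$ is negative.

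\emph{Step 1: positivity and the pendant-triangle entries.} We know $q_{u'}>0$. The eigen-equations at $u'$ and $u''$ are $\lambda q_{u'}=q_u-q_{u''}$ and $\lambda q_{u''}=q_u-q_{u'}$. Subtracting gives $(\lambda-1)(q_{u'}-q_{u''})=0$, and since $\lambda>1$ we get $q_{u'}=q_{u''}=q_u/(\lambda+1)$. In particular $q_u>0$. Every edge of $H$ is positive and $H$ is connected, so the relation $\lambda q_v=\sum_{x\in N_H(v)}q_x$ (plus a nonnegative contribution at $v=u$) spreads positivity from $u$. Hence $q_v>0$ for all $v\in V(H)$.

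\emph{Step 2: $q_u=\max_{v\in V(H)}q_v$.} Suppose instead that $q_x>q_u$ for some $x\in V(H)$.
\begin{itemize}
\item Form $\Gamma'$ by deleting $uu',uu''$ and adding the positive edges $xu',xu''$. Then $\Gamma'$ has $m$ edges and contains the negative triangle $xu'u''$.
\item Since $N_{\Gamma'}(u')\setminus\{u''\}=N_{\Gamma'}(u'')\setminus\{u'\}=\{x\}$, the edge $u'u''$ lies on no $4$-cycle. As it is the only negative edge, $\Gamma'\in\mathcal F_m$.
\item The Rayleigh quotient gives $\lambda_1(\Gamma')\ge \mathbf q^TA(\Gamma')\mathbf q=\lambda+2(q_x-q_u)(q_{u'}+q_{u''})>\lambda$.
\end{itemize}
This contradicts extremality, so $q_u\ge q_v$ for every $v\in V(H)$.

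\emph{Step 3: rotation.} Suppose some $w\in V(H)\setminus\{u\}$ is not adjacent to $u$. Because $H$ is connected and $w\ne u$, $w$ has a neighbor $v\in N_H(w)$, and $v\ne u$.
\begin{itemize}
\item Form $\Gamma''$ by deleting the positive edge $wv$ and adding the positive edge $wu$.
\item The edge count is unchanged, all edges other than $u'u''$ remain positive, and $u',u''$ still have $u$ as their only other neighbor. So the negative triangle survives, no negative $4$-cycle can arise, and $\Gamma''\in\mathcal F_m$. Possible disconnection of $H$ is irrelevant for membership in $\mathcal F_m$.
\item By Step 2, $\lambda_1(\Gamma'')\ge\lambda+2q_w(q_u-q_v)\ge\lambda$.
\end{itemize}
If equality held, extremality would force $A(\Gamma'')\mathbf q=\lambda\mathbf q$. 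But then $\bigl((A(\Gamma'')-A(\widetilde{\Gamma}))\mathbf q\bigr)_v=-q_w<0$ by Step 1, a contradiction. Hence $\lambda_1(\Gamma'')>\lambda$, contradicting extremality, and therefore $N_H(u)=V(H)\setminus\{u\}$.

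The only delicate point is Step 2. There one must verify that relocating the pendant negative triangle keeps the graph in $\mathcal F_m$, and that the gain is strict; strictness holds because $q_{u'}+q_{u''}>0$. Everything else is a direct Rayleigh-quotient comparison of the kind already used repeatedly in Section~3.
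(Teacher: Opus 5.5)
Your proof is correct and follows essentially the paper's argument. You first move the negative triangle from $u$ to a vertex $x$ with $q_x>q_u$ to show that $q_u$ is maximal on $V(H)$, and then rotate an edge at a non-neighbour $w$ of $u$ onto $u$. The only difference is cosmetic: you establish positivity of $\mathbf q$ on $V(H)$ upfront, via $q_{u'}=q_{u''}=q_u/(\lambda+1)$ and connectivity. That lets you rotate an arbitrary edge $wv$ and read off the contradiction $-q_w<0$ at coordinate $v$. The paper instead rotates the last edge of a shortest $u$--$w$ path and derives the needed positivity inside the equality case.
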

\noindent\emph{Proof.}
	 We first show $q_u=\max_{v\in V(H)} q_v$. Otherwise,
	suppose that there exists a vertex $v\in V(H)$ such that $q_v>q_u$. We construct a new signed graph $\Gamma'$ from $\widetilde{\Gamma}$ by deleting the positive edges $uu',uu''$ and adding the positive edges $vu',vu''$. Clearly,
$|E(\Gamma')|=|E(\widetilde{\Gamma})|=m$,
the negative triangle formed by vertices $u',v,u''$ is the unique negative cycle in $\Gamma'$ and no negative
$4$-cycle is created.
Then $\Gamma'\in\mathcal F_m$, and
	\[
	\lambda_1(\Gamma')\ge \mathbf q^T A(\Gamma')\mathbf q=\lambda+2(q_v-q_u)(q_{u'}+q_{u''})>\lambda,
	\]
	a contradiction. Hence $q_u=\max_{v\in V(H)} q_v$. Now suppose that there exists a vertex $v\in V(H)\setminus(N_H(u)\cup\{u\})$.
 Let $u=v_0v_1\cdots v_\ell=v$ be a shortest path in $H$ with $\ell\ge2$.
	We construct a new signed graph $\Gamma'$ from $\widetilde{\Gamma}$ by deleting the positive edge $v_{\ell-1}v_\ell$ and adding the positive edge $uv_\ell$.
Clearly,
$|E(\Gamma')|=|E(\widetilde{\Gamma})|=m$, the negative triangle formed by vertices $u',u,u''$ remains a negative cycle in $\Gamma'$ and no negative
$4$-cycle is created. Then $\Gamma'\in\mathcal F_m$, and
	\[
	\lambda_1(\Gamma')\ge \mathbf q^T A(\Gamma')\mathbf q=\lambda+2q_{v_\ell}(q_u-q_{v_{\ell-1}})\ge\lambda.
	\]
	If $\lambda_1(\Gamma')=\lambda$, then
	$A(\Gamma')\mathbf q=\lambda\mathbf q$.
    We have
	$
	0=\big((A(\Gamma')-A(\widetilde{\Gamma}))\mathbf q\big)_{v_\ell}
	=q_u-q_{v_{\ell-1}}.
	$
	Thus
	$
	\lambda q_{v_\ell}
	=\sum_{z\in N_H(v_\ell)}q_z
	\ge q_{v_{\ell-1}}=q_u>0.
	$
	However,
	$
	0=\big((A(\Gamma')-A(\widetilde{\Gamma}))\mathbf q\big)_u
	=q_{v_\ell}>0
	$,
	a contradiction. Thus $N_H(u)=V(H)\setminus\{u\}$.
		
By Claim~\ref{claim7}, for every $v\in V(H)\setminus\{u\}$,
we have
$\lambda q_v=\sum_{z\in N_H(v)}q_z\ge q_u>0$.
Thus $q_v>0$ for every $v\in V(H)$.
Order vertices of $V(H)=\{v_1,\dots,v_{n'}\}$ with $v_1=u$ such that $q_{v_1}\ge q_{v_2}\ge\cdots\ge q_{v_{n'}}>0$. 
	
\begin{claim}\label{claim8}
 Let $v_i, v_j, v_k \in V(H)$ be three distinct vertices with $i<j$. If $v_jv_k\in E(H)$, then $v_iv_k\in E(H)$.	\end{claim}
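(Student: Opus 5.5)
We argue by contradiction with the standard edge-rotation (Kelmans-type) argument, exactly as in Claim~\ref{claim7}. Suppose $v_i,v_j,v_k$ are distinct with $i<j$, $v_jv_k\in E(H)$ but $v_iv_k\notin E(H)$. Then $v_i\neq u$, since by Claim~\ref{claim7} the vertex $u=v_1$ is adjacent to every other vertex of $H$; similarly $v_k\ne u$. Form $\Gamma'$ from $\widetilde{\Gamma}$ by deleting the positive edge $v_jv_k$ and adding the positive edge $v_iv_k$. The edge count is unchanged, and the vertices $u',u''$ and their incident edges are untouched. Hence the negative triangle $u'uu''$ survives and $\Gamma'$ is unbalanced. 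Moreover, $u'u''$ is still the unique negative edge and $N_{\Gamma'}(u')\setminus\{u''\}=N_{\Gamma'}(u'')\setminus\{u'\}=\{u\}$. So every negative cycle must pass through $u'u''$ and hence through $u$ on both sides; the only such cycle is the triangle, and no negative $4$-cycle is created. Thus $\Gamma'\in\mathcal F_m$.

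Next compare Rayleigh quotients with the unit nonnegative eigenvector $\mathbf q$. We have
\[
\lambda_1(\Gamma')\ge \mathbf q^TA(\Gamma')\mathbf q=\lambda+2q_{v_k}(q_{v_i}-q_{v_j})\ge\lambda,
\]
because $q_{v_i}\ge q_{v_j}$ by the ordering and $q_{v_k}>0$. Extremality of $\widetilde{\Gamma}$ then forces equality, and then $\mathbf q$ is an eigenvector of $A(\Gamma')$ for $\lambda$.

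For the equality case, evaluate $\bigl((A(\Gamma')-A(\widetilde{\Gamma}))\mathbf q\bigr)$, which must vanish, at the coordinate $v_i$. The only change at row $v_i$ is the new edge $v_iv_k$, so this entry equals $q_{v_k}>0$, a contradiction. (Alternatively, the $v_k$-coordinate gives $q_{v_i}=q_{v_j}$, but the $v_i$-coordinate alone already suffices.) Hence $\lambda_1(\Gamma')>\lambda$, contradicting extremality, and therefore $v_iv_k\in E(H)$.

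The only delicate point is checking membership $\Gamma'\in\mathcal F_m$, in particular that the rotation cannot create a negative $4$-cycle. This is handled by the observation that $u',u''$ have $u$ as their only other neighbor, which is preserved. Everything else uses only the strict positivity $q_v>0$ on $V(H)$ established just before the claim.
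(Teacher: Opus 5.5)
Your proposal is correct and follows essentially the same edge-rotation argument as the paper: you exclude $v_i=u$ and $v_k=u$ via Claim~\ref{claim7}, replace $v_jv_k$ by $v_iv_k$, and obtain the equality-case contradiction from the $v_i$-coordinate $q_{v_k}>0$. You also spell out why the triangle $u'uu''$ is the only negative cycle of $\Gamma'$, since $N_{\Gamma'}(u')\setminus\{u''\}=N_{\Gamma'}(u'')\setminus\{u'\}=\{u\}$, a step the paper treats as clear.
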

\noindent\emph{Proof.}	
If $i=1$ or $k=1$, then the conclusion follows directly from Claim~\ref{claim7}. We assume $2\le i<j$ and $k\ne 1$. Suppose for contradiction that $v_iv_k\notin E(H)$.
We construct a new signed graph $\Gamma'$ from $\widetilde{\Gamma}$ by deleting the positive edge $v_jv_k$ and adding the positive edge $v_iv_k$.
Clearly,
$|E(\Gamma')|=|E(\widetilde{\Gamma})|=m$,
the negative triangle formed by vertices $u',u,u''$ remains a negative cycle in $\Gamma'$ and no negative
$4$-cycle is created. 	Then $\Gamma'\in\mathcal F_m$, and
\[
\lambda_1(\Gamma')\ge \mathbf q^T A(\Gamma')\mathbf q=\lambda+2q_{v_k}(q_{v_i}-q_{v_j})\ge\lambda.
\]
If $\lambda_1(\Gamma')=\lambda$, then $(A(\Gamma')-A(\widetilde{\Gamma}))\mathbf q=\mathbf 0$.
However,
$
0=\big((A(\Gamma')-A(\widetilde{\Gamma}))\mathbf q\big)_{v_i}=q_{v_k}>0,
$
a contradiction. Therefore $v_iv_k\in E(H)$.

Let $c'$ be the order of a maximum clique in $H$. By Claim \ref{claim8}, $\{v_1,\ldots,v_{c'}\}$ induces a clique.
Thus $2\le c'\le d$. Otherwise, if $c'\ge d+1$, then
$
m\ge\binom{c'}2+3\ge\binom{d+1}2+3
=\binom d2+d+3>\binom d2+t+3=m
$,
a contradiction. By the eigen-equations at $u'$ and $u''$, we have
$\lambda q_{u'}=q_u-q_{u''}$ and
$\lambda q_{u''}=q_u-q_{u'}$.
Since $\lambda>1$, we obtain
$q_{u'}=q_{u''}=q_u/(\lambda+1)$.
Let $\mathbf q_H=\mathbf q|_{V(H)}$. Then
$
\lambda\mathbf q_H
=A(H)\mathbf q_H+(q_{u'}+q_{u''})e_u
=\left(A(H)+\frac2{\lambda+1}E_{uu}\right)\mathbf q_H.
$
Since $H$ is connected, the matrix
$A(H)+\frac2{\lambda+1}E_{uu}$ is nonnegative and irreducible.
Moreover, $\mathbf q_H$ is positive. By Lemma~\ref{lemma2.5},
$
\lambda_1(\widetilde{\Gamma})
=\lambda
=\rho\!\left(A(H)+\frac2{\lambda+1}E_{uu}\right).
$
 Recall $a_0=\lambda^*+1$, and
 $
 \rho\!\left(A(H)+\frac2{a_0}E_{uu}\right)
 \ge
 \rho\!\left(A(H)+\frac2{\lambda+1}E_{uu}\right)
 =\lambda\ge\lambda^*
 $	since $a_0\le\lambda+1$.

\begin{claim}\label{claim9}
	$H\cong H_{d,t}$.
\end{claim}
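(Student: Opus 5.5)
The plan is a variational comparison between $H$ and $H_{d,t}$ for the perturbed matrix $M_\beta(H):=A(H)+\beta E_{uu}$ with $\beta\in(0,1)$. Two facts are already available. First, from the end of the proof of Theorem~\ref{thm2}(ii) so far, $\rho(M_{2/a_0}(H))\ge\lambda\ge\lambda^*$. Second, from the proof of Lemma~\ref{lemma2.7}(ii), $\lambda^*=\rho(M_{2/a_0}(H_{d,t}))$ with $u\in N(z)$. So it suffices to prove
\[
\rho(M_\beta(H))\le\rho(M_\beta(H_{d,t})),\qquad \beta=2/a_0,
\]
for every admissible $H$, with equality only when $H\cong H_{d,t}$ and $u$ is a vertex of $K_d$ adjacent to $z$. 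Equality then forces $\lambda=\lambda^*$ and $H\cong H_{d,t}$.

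The admissible $H$ are very restricted. By Claims~\ref{claim7} and \ref{claim8}, $H$ is a threshold graph. Its vertices are ordered $v_1=u,v_2,\dots,v_{n'}$ by decreasing $q$, and each $N(v_j)\setminus\{v_i\}$ lies in $N(v_i)$ for $i<j$. Also $u$ is dominating, $\{v_1,\dots,v_{c'}\}$ is a maximum clique, $2\le c'\le d$, and $|E(H)|=\binom d2+t$ with $1\le t\le d-1$. I would split according to $c'$.

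\emph{Case $c'=d$.} The clique $\{v_1,\dots,v_d\}$ uses $\binom d2$ edges. The remaining $t$ edges join outside vertices $v_{d+1},v_{d+2},\dots$ to prefixes of the ordering; no two outside vertices are adjacent, since otherwise nestedness would enlarge the clique. If $v_{d+1}$ carries all $t$ edges, then $H=H_{d,t}$ and $u=v_1\in N(z)$, as required. Otherwise there are at least two outside vertices with prefix neighbourhoods of sizes $s_1\ge s_2\ge\cdots$, $\sum s_i=t$. I would move the edges of the last outside vertex $v_r$ to $v_{d+1}$. These go to the next clique vertices $v_{s_1+1},\dots$, which are not yet neighbours of $v_{d+1}$; since $s_1+s_r\le t\le d-1$, such vertices exist. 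The Rayleigh-quotient argument used throughout with $\mathbf q$ does not decide the sign directly, because the target vertices have smaller $q$. So I would instead compute both perturbed spectral radii through Schur complements. Eliminate the outside vertices; this gives, on $K_d$, the matrix
\[
a I-J+\beta E_{11}-\sum_i \frac1{\lambda}\,\mathbf 1_{S_i}\mathbf 1_{S_i}^T,
\]
where $S_i$ is the prefix neighbourhood of the $i$-th outside vertex. Then apply Lemma~\ref{lemma2.3}(iii),(iv) as in Lemma~\ref{lemma2.7}. Merging two prefix vectors into one longer prefix should increase the relevant quadratic form $\mathbf 1^TQ^{-1}\mathbf 1$-type quantity. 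This is a convexity statement, since $(s_1+s_r)^2> s_1^2+s_r^2$ up to the lower-order corrections coming from $J$ and $\beta E_{11}$.

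\emph{Case $c'\le d-1$.} Here I would show $\rho(M_\beta(H))<d-1+\frac{t^2(d+1)}{d^3+t^2}$, which is below $\lambda^*$ by Lemma~\ref{lemma2.7}(ii). For threshold graphs with dominating vertex and clique number $c'$, I would bound the spectral radius via the equitable-type quotient over the nested degree classes, or via a Stanley-type estimate in the spirit of Lemma~\ref{lemma2.4} refined by clique number. I would add at most $\beta\,x_u^2\le\beta<2/d$ for the diagonal perturbation. Small $d$, say $d\le4$, would be checked directly.

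The main obstacle is Case $c'\le d-1$. Generic clique-number bounds (Nikiforov/Motzkin--Straus: $\rho^2\le 2|E|(1-1/c')$) are too weak when $t$ is close to $d-1$. I would therefore first try to reduce this case to Case $c'=d$ by a sequence of edge moves. Each move transfers an edge from the last nonadjacent pair into the clique $\{v_1,\dots,v_{c'+1}\}$, and I would control its effect again by a Schur complement computation rather than by the eigenvector $\mathbf q$. Only if that fails would I fall back on the explicit numerical bound above.
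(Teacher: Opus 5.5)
\paragraph{There is a genuine gap.} Your reduction is correct and matches the paper: it suffices to show $\rho(A(H)+\frac{2}{a_0}E_{uu})<\lambda^*$ whenever $H\not\cong H_{d,t}$. Your description of $H$ (threshold, $u$ dominating, the outside set $W$ independent with prefix neighbourhoods) also matches. But the decisive step is not proved.

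\paragraph{The case $c'\le d-1$.} You flag this as the main obstacle, and nothing is established there. Your concrete fallback also cannot work as stated. Comparing with the Rayleigh lower bound $d-1+\frac{t^2(d+1)}{d^3+t^2}$ instead of $\lambda^*$, and paying $\beta x_u^2\le\beta$ for the diagonal term, discards too much. Take $d=10$, $t=1$, $c'=9$, and let $H$ be $K_9$ on $v_1,\dots,v_9$ plus one vertex joined to $v_1,\dots,v_8$ and one vertex joined to $v_1,v_2$ (46 edges). Then $\rho(A(H))\approx 8.86$, since it contains $K_{10}-e$ with index $\frac{7+\sqrt{113}}{2}$ plus a degree-$2$ vertex. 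Since $\beta=2/a_0>2/11$, you get $\rho(A(H))+\beta>9.04$, whereas the target is $9+\frac{11}{1001}\approx 9.011$. The margin has to come from the exact relation (4) that defines $\lambda^*$.

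\paragraph{The missing idea.} This also closes your case $c'=d$: you need a way to handle several outside vertices at once. Eliminating the independent set $W$ at the fixed value $\lambda^*$ gives the Schur complement $Q_{c'}-\frac1{\lambda^*}B_0B_0^T$, where $Q_{c'}=a_0I-J_{c'}-\frac2{a_0}E_{11}$. Note that the diagonal term enters with a minus sign, not as $+\beta E_{11}$.
\begin{itemize}
\item When $|W|\ge 2$ this is a perturbation of rank at least two. Lemma~\ref{lemma2.3}(iv) does not apply, and there is no single quadratic form whose increase under ``merging'' decides positive definiteness.
\item Your edge-by-edge comparison would compare two multi-rank Schur complements, which is not straightforward either.
\item The paper instead uses $\rho(Z_{c'})\le\operatorname{tr}Z_{c'}$ for the positive semidefinite matrix $Z_{c'}$. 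This reduces positive definiteness to the scalar inequality $\sum_i g_{c'}(d_H(w_i))<\lambda^*$, with $g_{c'}$ explicit in (14).
\end{itemize}

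\paragraph{How the two cases then close.} For $c'=d$ the argument finishes at once. Repeated use of Fact 1 (superadditivity of $g_d$) gives $\sum_i g_d(d_H(w_i))<g_d(t)$ when $s\ge 2$, and $g_d(t)=\lambda^*$ by the proof of Lemma~\ref{lemma2.7}; no edge moves are needed. For $c'<d$, convexity (Facts 1 and 2) reduces to $p$ outside vertices of degree $c'-1$ plus one of degree $r$. One must then verify $N>0$ in (16) by means of (4), which is the lengthy case analysis on $\widetilde M$. Without the trace reduction and this explicit comparison against $\lambda^*$, neither of your two cases is complete.
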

\noindent\emph{Proof.}
Let $W=V(H)\setminus\{v_1,\ldots,v_{c'}\}=\{w_1,\ldots,w_s\}$,
where $w_i=v_{c'+i}$ for $1\le i\le s$ and $s=n'-c'$.
We first show that $W$ is an independent set. Otherwise, suppose that
$v_iv_j\in E(H)$ with $c'<i<j$. For every $1\le a<b\le c'+1$,
Claim~\ref{claim8} implies $v_av_j\in E(H)$ and then $v_av_b\in E(H)$.
Thus $\{v_1,\ldots,v_{c'+1}\}$ forms a clique of size $c'+1$,
a contradiction. Hence $W$ is an independent set.
By Claim~\ref{claim8}, the connectivity of $H$ and maximality of $c'$,
we have
$
N_H(w_i)=\{v_1,\ldots,v_{d_H(w_i)}\}$ and $
1\le d_H(w_i)\le c'-1
$
for every $w_i\in W$. Consequently,
\[
m-3=\binom{c'}2+\sum_{i=1}^{s}d_H(w_i)
=\binom d2+t.\ (1\le t\le d-1)
\]
 Define 
\[ 
Q_{c'}=\lambda^* I_{c'}-A(K_{c'})-\frac2{a_0} E_{11} 
=a_0I_{c'}-J_{c'}-\frac2{a_0}E_{11}. 
\] We first show that $Q_{c'}$ is always positive definite.
If $c'<d$, then $\lambda^*>d-1\ge2$ by Lemma~\ref{lemma2.7}(ii), so $\frac2{a_0}=\frac2{\lambda^*+1}<1$ and every row sum of $A(K_{c'})+\frac2{a_0} E_{11}$ is smaller than $c'$. Hence
$
\rho(A(K_{c'})+\frac2{a_0} E_{11})<c'\le d-1<\lambda^*
$, and $Q_{c'}$ is positive definite. If $c'=d$, then $\rho(A(K_d)+\frac2{a_0} E_{11})<\lambda^*$ by the proof of Lemma~\ref{lemma2.7}(ii), so $Q_d$ is also positive definite. 
For $1\le z\le c'-1$, let 	$\mathbf v_z=(\underbrace{1,\ldots,1}_{z},0,\ldots,0)^T$ $\in\mathbb R^{c'}$. Set $g_{c'}(z):=\mathbf v_z^T Q_{c'}^{-1} \mathbf v_z$ and set $g_{c'}(0)=0$. Recall that $m^*=a_0^2-2$ and $h=a_0-d$. Set $P_{c'}=a_0I_{c'}-J_{c'}$ and $h^*=a_0\lambda^*h-t(h+t)>0$. And by equation (4), $
m^*h^*=2(a_0\lambda^*+h+t)>0.$ Then 
\begin{align}
	Q_{c'}^{-1}
	&= P_{c'}^{-1}+\frac{\frac{2}{a_0}P_{c'}^{-1}e_1e_1^T P_{c'}^{-1}}{1-\frac{2}{a_0}e_1^T P_{c'}^{-1}e_1}\notag \\
	&= \frac{1}{a_0}I_{c'}+\frac{1}{a_0 (a_0-c')}J_{c'}+\frac{2((a_0-c')e_1+\mathbf 1)((a_0-c')e_1+\mathbf 1)^T}{a_0 (a_0-c')((a_0-c')m^*-2)},
	\tag{13}
\end{align}
and by equation (13),
\begin{align}
	g_{c'}(z) &= \mathbf{v}_z^T Q_{c'}^{-1}\mathbf{v}_z
	= \mathbf{v}_z^T P_{c'}^{-1}\mathbf{v}_z
	+\frac{2\big(\mathbf{v}_z^T\big((a_0-c'){e}_1+\mathbf{1}\big)\big)^2}{a_0(a_0-c')\big((a_0-c')m^*-2\big)} \notag \\
	&= \frac{z(a_0-c'+z)}{a_0(a_0-c')}
	+\frac{2(a_0-c'+z)^2}{a_0(a_0-c')\big((a_0-c')m^*-2\big)} \notag \\
	&= \frac{a_0-c'+z}{a_0(a_0-c')}
	\left(z+\frac{2(a_0-c'+z)}{(a_0-c')m^*-2}\right) \notag \\
	&= \frac{(a_0-c'+z)(z m^*+2)}{a_0\big((a_0-c')m^*-2\big)}.
	\tag{14} \label{eq:14}
\end{align}
Let $B_0$ be the $c'\times s$ matrix whose $i$-th column is
$\mathbf v_{d_H(w_i)}$. Then
\[
\lambda^* I-\left(A(H)+\frac2{a_0}E_{uu}\right)
=\begin{pmatrix}
	Q_{c'}&-B_0\\
	-B_0^T&\lambda^* I_s
\end{pmatrix}.
\]
Because $\lambda^* I_s$ is positive definite, this matrix is positive definite if and only if its Schur complement
$
Q_{c'}-\frac1{\lambda^*}B_0B_0^T
=Q_{c'}-\frac1{\lambda^*}
\sum_{i=1}^{s}\mathbf v_{d_H(w_i)}\mathbf v_{d_H(w_i)}^T
$
is positive definite.
By a congruence transformation with $Q_{c'}^{-1/2}$, we obtain
\[
\begin{aligned}
	&Q_{c'}^{-1/2}
	\left(Q_{c'}-\frac1{\lambda^*}B_0B_0^T\right)Q_{c'}^{-1/2}\\
	&\qquad=I_{c'}-\frac1{\lambda^*}Q_{c'}^{-1/2}
	\left(\sum_{i=1}^{s}\mathbf v_{d_H(w_i)}
	\mathbf v_{d_H(w_i)}^T\right)Q_{c'}^{-1/2}
	=I_{c'}-Z_{c'},
\end{aligned}
\]
where
$
Z_{c'}=\frac1{\lambda^*}Q_{c'}^{-1/2}
\left(\sum_{i=1}^{s}\mathbf v_{d_H(w_i)}
\mathbf v_{d_H(w_i)}^T\right)Q_{c'}^{-1/2}
$.
Since congruence transformations preserve positive definiteness, the positive definiteness of $Q_{c'}-\frac1{\lambda^*}B_0B_0^T$ is equivalent to that of $I_{c'}-Z_{c'}$,
 and the matrix $I_{c'}-Z_{c'}$ is positive definite if and only if $\rho(Z_{c'})<1$. Note that $Z_{c'}$ is positive semidefinite, then
 
\[	\rho(Z_{c'})
	\le\operatorname{tr}Z_{c'}
	=\frac1{\lambda^*}\sum_{i=1}^{s}
	\mathbf v_{d_H(w_i)}^TQ_{c'}^{-1}\mathbf v_{d_H(w_i)}
	=\frac1{\lambda^*}\sum_{i=1}^{s}g_{c'}\bigl(d_H(w_i)\bigr).\]
Accordingly,
$\rho(A(H)+\frac2{a_0}E_{uu})<\lambda^*$ follows provided
\[
\sum_{i=1}^{s}g_{c'}\bigl(d_H(w_i)\bigr)<\lambda^*.
\]
Since $Q_{c'}$ is positive definite, we have $a_0\big((a_0-c')m^*-2)>0$. By equation (14),  we obtain
$
g_{c'}''(z)=
\frac{2m^*}{a_0((a_0-c')m^*-2)}>0.
$
Thus $g_{c'}$ is strictly convex on $[1,c'-1]$.
We now determine the graph structure that maximizes
$\sum_{i=1}^{s}g_{c'}(d_H(w_i))$ among graphs with the above clique and neighborhood structure with $c'$ and $|E(H)| = m - 3$ fixed and with no isolated vertices.
Let $w_i,w_j$ be two distinct vertices of $W$.
We establish the following two key facts.

\begin{itemize}
	\item Fact 1: If $d_H(w_i)+d_H(w_j)\le c'-1$, then	$
	g_{c'}\bigl(d_H(w_i)+d_H(w_j)\bigr)
	>
	g_{c'}\bigl(d_H(w_i)\bigr)+g_{c'}\bigl(d_H(w_j)\bigr).
	$
\noindent\emph{Proof.}
	\[
	\begin{aligned}
		&g_{c'}\bigl(d_H(w_i)+d_H(w_j)\bigr)
		-g_{c'}\bigl(d_H(w_i)\bigr)-g_{c'}\bigl(d_H(w_j)\bigr)\\
		&\qquad=
		\frac{2m^*d_H(w_i)d_H(w_j)-2(a_0-c')}
		{a_0((a_0-c')m^*-2)}>0,
	\end{aligned}
	\]
	since $m^*d_H(w_i)d_H(w_j)-(a_0-c')\ge m^*-(a_0-c')=(\lambda^*)^2+\lambda^*+c'-2>0$.
	\item Fact 2: If $2\le d_H(w_i)\le d_H(w_j)<c'-1$ and
	$d_H(w_i)+d_H(w_j)>c'-1$, then	$
	g_{c'}\bigl(d_H(w_i)-1\bigr)+g_{c'}\bigl(d_H(w_j)+1\bigr)
	>
	g_{c'}\bigl(d_H(w_i)\bigr)+g_{c'}\bigl(d_H(w_j)\bigr).
	$

	\noindent\emph{Proof.}
	\[
	\begin{aligned}
		&g_{c'}\bigl(d_H(w_i)-1\bigr)+g_{c'}\bigl(d_H(w_j)+1\bigr)
		-g_{c'}\bigl(d_H(w_i)\bigr)-g_{c'}\bigl(d_H(w_j)\bigr)\\
		&\qquad=
		\frac{2m^*\bigl(d_H(w_j)-d_H(w_i)+1\bigr)}
		{a_0((a_0-c')m^*-2)}>0.
	\end{aligned}
	\]
\end{itemize}

Choose a graph $H'$ attaining this maximum among the graphs
under consideration. Suppose that $H'$ has two vertices
$w_i,w_j$ outside the clique such that
$
1\le d_{H'}(w_i)\le d_{H'}(w_j)<c'-1.
$
If $d_{H'}(w_i)+d_{H'}(w_j)\le c'-1$, we construct a new signed graph $H'_1$ by deleting the vertex $w_i$
and replacing the neighborhood of the vertex $w_j$ by
$
\{v_1,\ldots,v_{d_{H'}(w_i)+d_{H'}(w_j)}\}.
$
Obviously, $H'_1$ and $H'$ have the same number of edges and the same clique order.
 By Fact 1, its sum of
$g_{c'}$-values is strictly larger, a contradiction.
If $d_{H'}(w_i)+d_{H'}(w_j)>c'-1$, then
$d_{H'}(w_i)\ge2$. We construct a new signed graph $H'_2$ by deleting the positive edge
$w_iv_{d_{H'}(w_i)}$ and adding the positive edge
$w_jv_{d_{H'}(w_j)+1}$.
Obviously, $H'_2$ and $H'$ have the same number of edges
and the same clique order.
By Fact 2, its sum of $g_{c'}$-values is strictly
larger, a contradiction.
Therefore, all vertices outside the clique except
possibly one have degree $c'-1$ and are adjacent precisely
to $v_1,\ldots,v_{c'-1}$ in $H'$.
If there is an exceptional vertex $w$, then
$
1\le d_{H'}(w)=r<c'-1$ and $
N_{H'}(w)=\{v_1,\ldots,v_{d_{H'}(w)}\}.
$ Let $p$ be the number of vertices of $H'$ outside the clique
that have degree $c'-1$. Since $H'$ has $m-3$ edges, we have
$R=\sum_{i=1}^{s}d_H(w_i)=m-3-\binom{c'}2=p(c'-1)+r$,
where $0\le r<c'-1$. Consequently,
\[
\sum_{i=1}^{s}g_{c'}\bigl(d_H(w_i)\bigr)
\le p\,g_{c'}(c'-1)+\mathbf1_{\{r>0\}}g_{c'}(r),
\tag{15}
\]
with equality if and only if $H$ also has exactly $p$ vertices
outside the clique of degree $c'-1$ and, if $r>0$, one further
vertex of degree $r$, with no other vertices outside the clique.

If $2\le c'\le d-1$, then $|W|\ge 2$. Otherwise, we first show that $|W|\neq 0$. If $|W|=0$, then $H=K_{c'}$ and $m=\binom{c'}{2}+3\le \binom{d}{2}+3<\binom{d}{2}+t+3=m$, a contradiction. Write $W=\{w_1\}$, then
$	\binom d2+t
=m-3=\binom{c'}2+d_H(w_1)
\le\binom{c'}2+c'-1
=\binom{c'+1}2-1\le\binom d2-1,$
a contradiction. 
We claim that 
\[
pg_{c'}(c'-1)+\mathbf{1}_{\{r>0\}}g_{c'}(r)
=p\cdot\frac{\lambda^*\big((c'-1)m^*+2\big)}{a_0\big((a_0-c')m^*-2\big)}
+\mathbf{1}_{\{r>0\}}\cdot\frac{(a_0-c'+r)(rm^*+2)}{a_0\big((a_0-c')m^*-2\big)}
<\lambda^*.
\]
 Thus the above inequality is equivalent to $N>0$, where
\begin{equation}
	N = a_0\lambda^*\big((a_0-c')m^*-2\big)-p\lambda^*\big((c'-1)m^*+2\big)-\mathbf{1}_{\{r>0\}}(a_0-c'+r)(rm^*+2).
	\tag{16}\label{eq:N-def}
\end{equation}
Note that
\[
R=\binom d2+t-\binom{c'}{2}
=\frac{(d-c')(d+c'-1)}2+t
=(d-c'+1)(c'-1)+\widetilde M,
\tag{17}
\]
where
\[
\widetilde M=t-d+\frac{(d-c'+1)(d-c'+2)}2.
\tag{18}
\]
We distinguish the following three cases based on  $\widetilde M$. 
\setcounter{casee}{0} 

\begin{casee} $\widetilde M=0$. 
	
	By equation (17) and the definitions of $p,r$, we have
	$p=d-c'+1$ and $r=0$. Recall that $h^*=a_0\lambda^*h-t(h+t)$.
	By equations (16) and (4), we obtain
	\[
	\frac{h^* N}{2} 
	=\frac{(d-c')\lambda^*(d-c'+1)(d-c'+2)}4 
	\bigl((d-c'-1)\lambda^*+(d-c')(d-c'+3)\bigr)>0. 
	\] Thus $N>0$. 
\end{casee} 
\begin{casee} $\widetilde M=-j<0$.
	
	Since 
	$(c'-1)-j
	=c'-1+\widetilde M
	=t+\frac{(d-c')(d-c'+1)}2>0$, $1\le j<c'-1$.
	By equation (17) and the definitions of $p,r$, we have $p=d-c'$ and $r=c'-1-j$. Then
	\[
	\frac{h^* N}{2}
	-\frac{(d-c')\lambda^*(d-c'+1)(d-c'+2)}4
	\bigl((d-c'-1)\lambda^*+(d-c')(d-c'+3)\bigr)=-\frac{j(d-c')}{4}N_1,
	\]
	where 
	\[ 
	\begin{aligned} 
		N_1 
		&={}-4d^2(d-c')-4dj-4d(d-c')^2-8d(d-c')h-8d(d-c')-8d 
		+2j(d-c')-4jh+6j\\&+(d-c')^3-4(d-c')^2h+8(d-c')^2-4(d-c')h^2-8(d-c')h+15(d-c')-8h+8\\ 
		&<-4d^2(d-c')-4d(d-c')^2-8d(d-c')-8d+(d-c')^3+8(d-c')^2+15(d-c')+8\\ 
		&\le-(d-c'+1)(7(d-c')^2+17(d-c')+8)<0.\end{aligned} 
	\] 
	Thus $\frac{h^* N}{2}>0$, and hence $N>0$. 
\end{casee} 

\begin{casee} $\widetilde M>0$.
\end{casee}
Let $n_0$ be the number of vertices of $H'$ outside the clique.

Then
$
n_0=\left\lceil\frac{R}{c'-1}\right\rceil
=\begin{cases}p,&r=0,\\p+1,&1\le r\le c'-2.\end{cases}
$
Define
$
\beta=m^*\bigl(a_0(a_0-c')-R\bigr)-2(a_0+n_0).
$
If $r=0$, 
then $R=p(c'-1)$, $n_0=p$, and $\mathbf1_{\{r>0\}}=0$. By equation (16),
\[ 
\begin{aligned} 
	N &= a_0\lambda^*((a_0-c')m^*-2)-p\lambda^*((c'-1)m^*+2)\\ 
	&=\lambda^*\Big[ a_0((a_0-c')m^*-2)-p((c'-1)m^*+2) \Big]\\ 
	&=\lambda^*\Big[m^*(a_0(a_0-c')-p(c'-1))-2a_0-2p\Big]\\ 
	&=\lambda^*\Big[m^*\big(a_0(a_0-c')-R\big)-2(a_0+n_0)\Big]\\&=\lambda^*\beta. 
\end{aligned} 
\] 
If $r>0$,
then $R=p(c'-1)+r$, $n_0=p+1$, and $\mathbf1_{\{r>0\}}=1$. By $\lambda^*=a_0-1$ and $r<c'-1$, we have 
\[ 
\begin{aligned} 
	N-\lambda^*\beta 
	&=\Big[a_0\lambda^*((a_0-c')m^*-2)-p\lambda^*((c'-1)m^*+2)-(a_0-c'+r)(rm^*+2)\Big] \\ 
	&\quad -\Big[\lambda^*m^*\big(a_0(a_0-c')-R\big)-2\lambda^*(a_0+p+1)\Big] \\ 
	&=\lambda^*\big[m^*(a_0(a_0-c')-p(c'-1))-2a_0-2p\big]-(a_0-c'+r)(rm^*+2) \\ 
	&\quad -\lambda^*m^*(a_0(a_0-c')-p(c'-1)-r)+2\lambda^*(a_0+p+1) \\ 
	&= \lambda^*m^*r-(a_0-c'+r)(rm^*+2)+2\lambda^*\\ 
	&=(c'-1-r)(rm^*+2)\\
	&>0.
\end{aligned} 
\] 
Consequently, 
\[
N=\lambda^*\beta+
\mathbf1_{\{r>0\}}(c'-1-r)(rm^*+2).
\tag{19}
\]
Hence $N\ge\lambda^*\beta$.
Thus, to obtain $N>0$, it suffices to prove $\beta>0$.

By equations (17) and (18), we obtain
$
1\le d-t\le\frac{(d-c')(d-c'+3)}2
$
and
$
n_0=d-c'+1+
\left\lceil\frac{\widetilde M}{c'-1}\right\rceil
\le d-c'+1+\widetilde M.
$
By equation (4), together with $h=a_0-d$ and $\lambda^*=a_0-1$, we have
$
m^*\bigl((a_0^2-d+t)h-(a_0-d+t)^2\bigr)
=2(a_0^2-d+t).
$
Since $a_0>d$ and $1\le d-t\le d-1$,  $a_0^2-d+t>0$.
Hence
\[
m^*h=2+\frac{m^*(a_0-d+t)^2}{a_0^2-d+t}.
\tag{20}
\]
Also, by equation (17),
\[
\begin{aligned}
	a_0(a_0-c')-R
	&=(a_0+d-c')h+\frac{(d-c')(d-c'+1)}2-t\\
	&=(a_0+d-c'+1)h
	+\frac{(d-c')(d-c'+1)}2+d-t-a_0.
\end{aligned}
\]
Define
\[
\begin{aligned}
	\varphi(x)
	&=(x+d-c'+1)(x-d+t)^2+(x^2-d+t)
	\left(\frac{(d-c')(d-c'+1)}2+d-t-x\right)\\
	&=\widetilde Mx^2+(d-t)(2c'-d-t-1)x
	+\frac{(d-c')(d-t)(d+c'-2t-1)}2.
\end{aligned}
\]
By equation (20),
\begin{align}
	\beta
	&=(a_0+d-c'+1)m^*h
	+m^*\left(\frac{(d-c')(d-c'+1)}2+d-t-a_0\right)
	-2(a_0+n_0)\notag\\
	&=\frac{m^*}{a_0^2-d+t}\varphi(a_0)-2(n_0-d+c'-1).
	\tag{21}
\end{align}

Further analysis splits into the following two subcases.
\renewcommand{\thesubcasee}{3.\arabic{subcasee}}
\renewcommand{\theHsubcasee}{3.\arabic{subcasee}}
\setcounter{subcasee}{0}

\begin{subcasee} $d-c'\ge2$.
\end{subcasee}
Since $\varphi''(x)=2\widetilde M>0$ and
$\varphi'(d-t+1)
=(d-t)\bigl((d-c')^2+d-c'-1-(d-t)\bigr) +(d-c'+1)(d-c'+2),$
we have
\[
\begin{aligned}
	(d-c')^2+d-c'-1-(d-t)
	&\ge(d-c')^2+d-c'-1-\frac{(d-c')(d-c'+3)}2\\
	&=\frac{(d-c'-2)(d-c'+1)}2\ge0.
\end{aligned}
\]
Thus $\varphi'(d-t+1)>0$ and $\varphi$ is strictly increasing on
$[d-t+1,\infty)$. Since $a_0>d\ge d-t+1$, we obtain
\begin{align}
	\varphi(a_0)&>\varphi(d-t+1)\notag\\
	&=\frac12\Bigl[(d-c'-1)(d-c'+2)(d-t)^2\notag\\
	&+(d-c')(d-c'+1)(d-t)
	+(d-c'+1)(d-c'+2)\Bigr]>0.
	\tag{22}
\end{align}
Since $a_0>d\ge4$, we have
\[
\frac{m^*}{a_0^2-d+t}
=\frac{a_0^2-2}{a_0^2-(d-t)}
\ge\frac{a_0^2-2}{a_0^2-1}
>\frac{14}{15}.
\]
By equations (21) and (22), together with
$n_0-d+c'-1\le\widetilde M$, we obtain
\[
\begin{aligned}
	\beta
	&>\frac{14}{15}\varphi(d-t+1)-2\widetilde M\\
	&=\frac1{15}\Bigl[
	7(d-c'-1)(d-c'+2)(d-t-1)^2\\
	\qquad &+\bigl(21(d-c')^2+21(d-c')+2\bigr)(d-t-1)\\
	\qquad &+2(d-c')\bigl(3(d-c')-5\bigr)\Bigr]>0.
	(\text{since}\ d-c'\ge2\ \text{and}\  d-t\ge1)
\end{aligned}
\]

\begin{subcasee} $d-c'=1$.
\end{subcasee}
Then $d\ge3$ and $\widetilde M=t-d+3>0$, so either $t=d-2$ or $t=d-1$.
If $t=d-2$, then $R=2d-3$ and
$n_0=\lceil(2d-3)/(d-2)\rceil=3$.
Since $h+t=a_0-2$ and $a_0^2-d+t=m^*$, by equation (20), we have
$
m^*h=(a_0-2)^2+2=a_0^2-4a_0+6.
$
Since $d=a_0-h$, we have
\[
\begin{aligned}
	\beta
	&=m^*\bigl(a_0(h+1)-(2d-3)\bigr)-2(a_0+3)\\
	&=(a_0+2)m^*h+(3-a_0)m^*-2(a_0+3)\\
	&=(a_0+2)(a_0^2-4a_0+6)
	+(3-a_0)(a_0^2-2)-2(a_0+3)\\
	&=a_0(a_0-2)>0.
\end{aligned}
\]
If $t=d-1$, then $R=2d-2$ and
$n_0=\lceil(2d-2)/(d-2)\rceil\le4$.
Since $h+t=a_0-1=\lambda^*$,  by equation (4), we have
$
m^*\lambda^*(a_0h-t)=2\lambda^*(a_0+1).
$
Since $\lambda^*>0$ and $t=a_0-h-1$, we have
\[
\begin{aligned}
	\beta
	&=m^*\bigl(a_0(h+1)-(2d-2)\bigr)-2(a_0+n_0)\\
	&=(a_0+2)m^*h+(2-a_0)m^*-2(a_0+n_0)\\
	&=\frac{(a_0+2)(a_0^3-a_0^2+4)}{a_0+1}
	+(2-a_0)(a_0^2-2)-2(a_0+n_0)\\
	&=2a_0^2-2a_0-2n_0+2+\frac2{a_0+1}\\
	&\ge2a_0^2-2a_0-6+\frac2{a_0+1}>0.
	(\text{since}\ a_0>d\ge3)
\end{aligned}
\]
Thus $\beta>0$ in both subcases, and $N>0$ by equation (19) in Case 3.

In all three cases, we obtain
$
\sum_{i=1}^{s}g_{c'}\bigl(d_H(w_i)\bigr)
\le p\,g_{c'}(c'-1)+\mathbf1_{\{r>0\}}g_{c'}(r)<\lambda^*
$
whenever $2\le c'<d$, which implies
$
\rho\!\left(A(H)+\frac2{a_0}E_{uu}\right)<\lambda^*,
$
contrary to
$\rho(A(H)+\frac2{a_0}E_{uu})\ge\lambda\ge\lambda^*$.
Thus $c'=d$ and
$\sum_{i=1}^{s}d_H(w_i)=t$, where $1\le t\le d-1$.
Suppose for contradiction that $s\ge2$. Then equality cannot hold in (15), and hence
$
\sum_{i=1}^{s}g_d\bigl(d_H(w_i)\bigr)
<g_d(t)=\lambda^*,
$
where the last equality follows from the proof of
Lemma~\ref{lemma2.7}(ii).
By the preceding Schur-complement comparison, we obtain
$
\rho\!\left(A(H)+\frac2{a_0}E_{uu}\right)<\lambda^*
$,
contrary to
$\rho(A(H)+\frac2{a_0}E_{uu})\ge\lambda\ge\lambda^*$.
Therefore $s=1$, and
$d_H(w_1)=m-3-\binom d2=t$.
By Claim~\ref{claim8}, the unique vertex $w_1$ outside the clique is adjacent
precisely to $v_1,\ldots,v_t$. Since $v_1=u$ and $t\ge1$, it is adjacent to
$u$. Thus $H\cong H_{d,t}$. This completes the proof of Theorem~\ref{thm2}.

\section{Acknowledgement}
AI-assisted tools were employed for language refinement of the manuscript and to help verify computational results. All contents of this paper are the sole responsibility of the authors.

\begingroup\small\setlength{\itemsep}{2pt}\setlength{\parskip}{0pt}\endgroup

\end{document}